\title{Twisted limit formula for torsion and cyclic base change}
\author{Nicolas Bergeron} 
\address{Universit\'e Pierre et Marie Curie \\
Institut de Math\'ematiques de Jussieu-Paris Rive Gauche \\
4, place Jussieu 75252 Paris Cedex 05, France \\}
\email{bergeron@math.jussieu.fr}
\urladdr{http://people.math.jussieu.fr/~bergeron}
\author{Michael Lipnowski}
\thanks{N.B. is a member of the Institut Universitaire de France}
\address{}
\address{Mathematics Department \\
Duke University, Box 90320 \\
Durham, NC 27708-0320, USA \\}
\email{malipnow@math.duke.edu}
\newcommand{\loc}{\mathcal{L}}
\newcommand{\M}{\mathcal{M}}
\newcommand{\Hom}{\mathrm{Hom}}
\newcommand{\ind}{\mathrm{ind}}
\newcommand{\g}{\mathfrak{g}}
\newcommand{\h}{\mathfrak{h}}
\renewcommand{\a}{\mathfrak{a}}
\renewcommand{\t}{\mathfrak{t}}
\newcommand{\RR}{\mathbb{R}}
\newcommand{\p}{\mathfrak{p}}
\newcommand{\C}{\mathbb{C}}
 \DeclareFontFamily{OT1}{rsfs}{}
 \newcommand{\tors}{\mathrm{tors}}
\DeclareFontShape{OT1}{rsfs}{n}{it}{<-> rsfs10}{}
\DeclareMathAlphabet{\mathscr}{OT1}{rsfs}{n}{it}
\newcommand{\detp}{\det{}'}
\renewcommand{\a}{\mathfrak{a}}
\newcommand{\Gal}{\mathrm{Gal}}
\newcommand{\n}{\mathfrak{n}}
\DeclareFontFamily{OT1}{rsfs}{}
\DeclareFontShape{OT1}{rsfs}{n}{it}{<-> rsfs10}{}
\DeclareMathAlphabet{\mathscr}{OT1}{rsfs}{n}{it}
\newcommand{\G}{\mathbf{G}}
\renewcommand{\k}{\mathfrak{k}}
\newcommand{\R}{\mathbb{R}}
\newcommand{\SO}{\mathrm{SO}}
\newcommand{\Aut}{\mathrm{Aut}}
\newtheorem{thm}[subsection]{Theorem}  
\newtheorem{lem}[subsection]{Lemma}         
\newtheorem*{lem*}{Lemma}         
\newtheorem{prop}[subsection]{Proposition}
\newtheorem*{prop*}{Proposition}
\newtheorem{exa}[subsection]{Example}
\newtheorem{cor}[subsection]{Corollary}
\theoremstyle{definition}
\newtheorem{defn}[subsection]{Definition}
\numberwithin{equation}{subsection}
\newcommand{\N}{\mathbb N}
\newcommand{\GL}{\mathrm{GL}}
\newcommand{\SL}{\mathrm{SL}}
\newcommand{\tr}{\mathrm{trace}}
\newcommand{\vol}{\mathrm{vol}}
\begin{document}

\begin{abstract}  
Let $G$ be the group of complex points of a real semi-simple Lie group whose fundamental rank is equal to $1$, e.g. $G= \SL_2 ( \C) \times \SL_2 (\C)$ or $\SL_3 (\C)$. Then 
the fundamental rank of $G$ is $2,$ and according to the conjecture made in \cite{BV}, lattices in $G$ should have `little' --- in the very weak sense of `subexponential in the co-volume' --- torsion homology. Using base change, we exhibit sequences of lattices where the torsion homology grows exponentially with the \emph{square root} of the volume. 

This is deduced from a general theorem that compares twisted and untwisted $L^2$-torsions in the general base-change situation. This also makes uses of a precise equivariant `Cheeger-M\"uller Theorem' proved by the second author \cite{Lip1}.
\end{abstract}
\maketitle
\tableofcontents

\section{Introduction}

\subsection{Asymptotic growth of cohomology}

Let $\Gamma$ be a torsion-free uniform lattice in a semisimple Lie group $G$ with maximal compact subgroup $K.$  Let $\Gamma_n \subset \Gamma$ be a decreasing sequence of normal subgroups with trivial intersection.  It is known that
$$\lim_{n \rightarrow \infty} \frac{\dim H^j(\Gamma_n, \mathbb{C})}{[\Gamma : \Gamma_n]}$$
converges to $b^{(2)}_j(\Gamma),$ the $j$th $L^2$-betti number of $\Gamma.$  If $b^{(2)}_j \neq 0$ for some $j,$ it follows that cohomology is abundant.  However, it is often true that $b^{(2)}_j(\Gamma) = 0$ for all $j;$ this is the case whenever $\delta(G) := \mathrm{rank}_{\C} G - \mathrm{rank}_{\C}K \neq 0.$   What is the true rate of growth of $b_j(\Gamma_n) = \dim H^j(\Gamma_n, \C)$ when $\delta(G) \neq 0$?  In particular, is $b_j(\Gamma_n)$ non-zero for sufficiently large $n$?

We address this question for `cyclic base-change.'  Before stating a general result, let's give two typical examples of this situation.

\medskip
\noindent
{\it Examples.} 1. The real semisimple Lie group $G=\SL_2 (\C)$ satisfies $\delta = 1$. Let $\sigma : G \to G$ be the real involution given by complex conjugation. 
 
2. The real semisimple Lie group $G=\SL_2 (\C) \times \ldots \times \SL_2 (\C)$ ($n$ times) satisfies $\delta = n$. Let $\sigma : G \to G$ be the order $2n$ automorphism of $G$ given by $\sigma (g_1 , \ldots , g_n ) = (\bar{g}_n , g_1 , \ldots , g_{n-1})$.

\medskip

Now let $\Gamma_n \subset \Gamma$ be a sequence of finite index, $\sigma$-stable subgroups of $G$. It follows from the general Proposition \ref{prop:intro} below that 
$$\sum_j b_j (\Gamma_n ) \gg \vol (\Gamma_n^{\sigma} \backslash \SL_2 (\RR) ).$$
Note that when the $\Gamma_n$'s are congruence subgroups of an arithmetic lattice $\Gamma$, then $\vol (\Gamma_n^{\sigma} \backslash \SL_2 (\RR) )$ grows like
$\vol (\Gamma_n \backslash G )^{\frac{1}{\mathrm{order}(\sigma)}}$. 

In this paper, we shall more generally consider the case where $G$ is obtained from a real algebraic group by `base change.' Let $\mathbf{G}$ be a connected 
semisimple quasi-split algebraic group defined over $\RR$. Let $\mathbb{E}$ be an \'{e}tale $\R$-algebra such that $\mathbb{E} / \RR$ is a cyclic Galois
extension with Galois group generated by $\sigma \in \Aut(\mathbb{E} / \R).$ Concretely, $\mathbb{E}$ is either $\mathbb{R}^s$ or $\mathbb{C}^s.$ 
In the first case $\sigma$ is of order $s$ and acts on $\mathbb{R}^s$ by cyclic permutation. In the second case $\sigma$ is of order $2s$ and acts on 
$\C^s$ by $(z_1 , \ldots , z_s) \mapsto (\bar{z}_s , z_1 , \ldots , z_{s-1})$.
The automorphism $\sigma$ induces a corresponding automorphism of the group $G$ of real points of $\mathrm{Res}_{\mathbb{E}/ \RR} \mathbf{G}$.\footnote{The assumption that $\G / \RR$  is quasi-split is used in the case where $\mathbb{E} = \mathbb{C},$ where we quote results of Delorme \cite{Delorme} concerning base change from $\G(\RR)$ to $\G(\mathbb{C}).$  We emphasize that assuming $\G / \RR$ is quasi-split is unnecessary in the case where $\mathbb{E} = \RR^s$ and $\sigma$ acts by cyclic permutation.  See \S \ref{product}.} We will furthermore assume that $H^1 (\sigma , G) = \{1 \}$; see \S \ref{par:C} for comments
on this condition. The following proposition is `folklore' (see e.g. Borel-Labesse-Schwermer \cite{BLS}, Rohlfs-Speh \cite{RohlfsSpeh} and Delorme \cite{Delorme}). 

\begin{prop} \label{prop:intro}
Let $\Gamma_n \subset \Gamma$ be a sequence of finite index, $\sigma$-stable subgroups of $G.$  Suppose that $\delta(G^{\sigma} ) = 0$. %(so that $\delta (G) = t$). 
Then we have:
$$\sum_j \dim b_j (\Gamma_n ) \gg \vol (\Gamma_n^{\sigma} \backslash G^{\sigma}).$$
\end{prop}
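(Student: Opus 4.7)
The plan is to estimate the Lefschetz number
$$L(\sigma, \Gamma_n) = \sum_j (-1)^j \mathrm{tr}\bigl(\sigma^* \mid H^j(\Gamma_n, \C)\bigr)$$
from below by a volume and from above by $\sum_j b_j(\Gamma_n)$. Since $\sigma$ has finite order, its action on each $H^j(\Gamma_n,\C)$ is diagonalisable with roots of unity as eigenvalues, so
$$\sum_j b_j(\Gamma_n) \geq \sum_j \bigl|\mathrm{tr}(\sigma^* \mid H^j(\Gamma_n, \C))\bigr| \geq |L(\sigma, \Gamma_n)|.$$
Thus it suffices to prove $|L(\sigma,\Gamma_n)| \gg \vol(\Gamma_n^\sigma \backslash G^\sigma)$.

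To compute $L(\sigma,\Gamma_n)$ topologically, I would realise $H^*(\Gamma_n,\C)$ as $H^*(\Gamma_n \backslash X, \C)$, where $X = G/K$ for a $\sigma$-stable maximal compact $K$; then $\sigma$ acts on the locally symmetric space $\Gamma_n \backslash X$ and we are in position to apply a Lefschetz fixed point theorem (Borel--Serre compactifying if the quotient is not compact, as in Rohlfs--Speh). The outcome is
$$L(\sigma, \Gamma_n) = \chi\bigl( (\Gamma_n \backslash X)^{\sigma} \bigr).$$
The fixed locus is a disjoint union indexed by $\sigma$-twisted conjugacy classes in $\Gamma_n$: a point $[x] \in \Gamma_n \backslash X$ is $\sigma$-fixed iff $\sigma(x) = \gamma x$ for some $\gamma \in \Gamma_n$. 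Here I would invoke the hypothesis $H^1(\sigma, G) = \{1\}$ to absorb each such $\gamma$ into a change of base point, reducing to the case $\sigma(x) = x$. The connected components of the fixed set are then copies of $\Gamma_n^{\sigma}\backslash X^{\sigma}$, where $X^{\sigma} = G^{\sigma}/K^{\sigma}$ is the symmetric space of the fixed-point subgroup.

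Under the assumption $\delta(G^{\sigma}) = 0$, the symmetric space $X^{\sigma}$ is of equal rank, hence even-dimensional with nonzero Gauss--Bonnet--Chern form; by the Harder/Gauss--Bonnet proportionality principle
$$\chi(\Gamma_n^{\sigma} \backslash X^{\sigma}) = c \cdot \vol(\Gamma_n^{\sigma} \backslash X^{\sigma})$$
with a nonzero constant $c$ depending only on $G^{\sigma}$. Crucially, $c$ has a definite \emph{sign} (it is the integral of a characteristic form that is, up to sign, positive), so the contributions from the distinct components of the fixed locus cannot cancel. Combining with the previous paragraph yields
$$|L(\sigma, \Gamma_n)| \gg \vol(\Gamma_n^{\sigma}\backslash X^{\sigma}) \asymp \vol(\Gamma_n^{\sigma}\backslash G^{\sigma}),$$
which with the opening inequality finishes the proof.

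The main obstacle I anticipate is the bookkeeping on the fixed locus: correctly interpreting the $\sigma$-fixed components of $\Gamma_n\backslash X$ in terms of $\Gamma_n^{\sigma}$ (this is precisely where $H^1(\sigma,G)=\{1\}$ enters), and handling the non-compact case so that the Lefschetz number still equals the topological Euler characteristic of the fixed locus (as opposed to a compactly supported or boundary-corrected version). Once these standard but delicate points are settled along the lines of \cite{BLS,RohlfsSpeh}, the sign-coherence built into Gauss--Bonnet makes the volume lower bound automatic.
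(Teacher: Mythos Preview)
Your argument is correct and is essentially the classical Rohlfs/Rohlfs--Speh approach the paper cites as ``folklore.'' The paper itself proceeds differently: it expresses $\mathrm{Lef}(\sigma,\Gamma_n,F)$ via the simple twisted trace formula, proves a limit multiplicity formula (Corollary~\ref{C:47}) showing that
\[
\frac{\mathrm{Lef}(\sigma,\Gamma_n,F)}{|H^1(\sigma,\Gamma_n)|\,\vol(\Gamma_n^\sigma\backslash G^\sigma)}\longrightarrow \mathrm{Lef}^{(2)}(\sigma,X,F),
\]
and then computes the $L^2$-Lefschetz number spectrally (Theorems~\ref{T:62} and~\ref{lefschetzcomparison}) to see it is nonzero when $\delta(G^\sigma)=0$. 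Your topological route is more elementary and in fact yields an exact identity $L(\sigma,\Gamma_n)=|H^1(\sigma,\Gamma_n)|\cdot c\cdot\vol(\Gamma_n^\sigma\backslash X^\sigma)$ for each $n$, with no hypotheses on the family; the paper's route only gives the asymptotic statement and needs the additional assumptions of Proposition~\ref{WLF} on the sequence $\{\Gamma_n\}$, but it has the advantage of setting up exactly the trace-formula machinery needed later for the twisted analytic torsion. One small imprecision: the components of the fixed locus are indexed by $H^1(\sigma,\Gamma_n)$ but are not all literally $\Gamma_n^\sigma\backslash X^\sigma$; they are quotients by twisted centralisers $\Gamma_n^\delta$, which under $H^1(\sigma,G)=\{1\}$ are $G$-conjugate to $\Gamma_n^\sigma$ and hence have the same volume and Euler characteristic (cf.\ \S\ref{par:C} of the paper). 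Your sign-coherence conclusion is unaffected. Since $\Gamma$ is assumed cocompact throughout, no Borel--Serre compactification is needed.
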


We prove Proposition \ref{prop:intro} for certain families $\{ \Gamma_n \}$ in \S \ref{WLF} but our real interest here is rather how the {\it torsion cohomology} grows.

\subsection{Asymptotic growth of torsion cohomology}
Let $(\rho, F)$ be a finite dimensional representation of $G$ defined over $\mathbb{R}$ and suppose that the $\Gamma_n$'s stabilize some fixed lattice $\mathcal{O} \subset F.$  The first named author and Venkatesh \cite{BV} prove that for `strongly acyclic' \cite[$\S 4$]{BV} representations $\rho,$ there is a lower bound
$$\sum_j \log | H^j(\Gamma_n, \mathcal{O})_{\mathrm{tors}}| \gg c(G,\rho) \cdot [\Gamma : \Gamma_n].$$
for some constant $c(G,\rho).$  In fact, they prove a limiting identity
\begin{equation} \label{torsioneulerchar}
\frac{\sum_j (-1)^j \log | H^j(\Gamma_n, \mathcal{O})_{\mathrm{tors}}|}{[\Gamma:\Gamma_n]} \rightarrow c(G,\rho)
\end{equation}
and prove that $c(G,\rho)$ is non-zero exactly when $\delta(G) = 1.$  The numerator of the left side of \eqref{torsioneulerchar} should be thought of as a `torsion Euler characteristic.'  
The purpose of this article is to prove an analogous theorem about `torsion Lefschetz numbers.'  

To state one instance of our main result, we will assume that:
\begin{enumerate}
\item $\sigma$ has prime order $p$ and $\mathcal{O}_{\mathbb{F}_p}$ is trivial.
\item the $\Gamma_n$'s are $\sigma$-stable principal congruence of $\Gamma$ of level $q_n$ for some infinite sequence of primes $q_n$ (see \eqref{congp}), and
\item the representation $\rho$ is strongly acyclic and can be extended to a finite dimensional (twisted) representation $\tilde{\rho}$ of the twisted space $\widetilde{G} = G \rtimes \sigma$ that is {\it strongly acyclic} (see \S \ref{def:SA}).
\end{enumerate}

Under these hypotheses we shall prove the following:

\begin{thm} \label{T:14}
We have: 
\begin{equation} \label{coarsetorsion}
\limsup \frac{\sum_j \log |H^j(\Gamma_n, \mathcal{O})|}{\vol(\Gamma_n^{\sigma} \backslash G^{\sigma})} > 0
\end{equation}
whenever $\delta(G^{\sigma}) = 1$.
\end{thm}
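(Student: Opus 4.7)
My plan is to adapt the strategy of Bergeron--Venkatesh \cite{BV} proving the limit \eqref{torsioneulerchar} to the $\sigma$-equivariant setting. The central new invariant is a $\sigma$-equivariant (Lefschetz) analytic torsion $\lef(\Gamma_n; V)$, obtained by weighting each eigenspace of the $V$-valued Hodge Laplacian on $\Gamma_n \backslash G/K$ by the trace of $\sigma$ in the Ray--Singer spectral definition. Strong twisted acyclicity of $V$ provides the spectral gap required for $\lef(\Gamma_n; V)$ to be well defined.

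The core of the proof is a comparison of two expressions for $\lef(\Gamma_n; V)$. On the combinatorial side, the equivariant Cheeger--M\"uller theorem of \cite{Lip1} identifies $\lef(\Gamma_n; V)$ with an equivariant Reidemeister--Lefschetz torsion. Because $V \otimes \C$ is $\sigma$-equivariantly acyclic, the transcendental regulator term vanishes, leaving an alternating sum of logarithms of orders (of $\sigma$-invariants, coinvariants, and similar $\sigma$-twisted data) on the integral cohomology $H^j(\Gamma_n, \mathcal{O})$, which is automatically finite under strong twisted acyclicity. Each such quantity is bounded in absolute value by $\log |H^j(\Gamma_n,\mathcal{O})|$, yielding the uniform estimate
$$|\lef(\Gamma_n; V)| \;\le\; C \sum_j \log |H^j(\Gamma_n, \mathcal{O})|.$$

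On the spectral side, I would establish a twisted $L^2$-limit formula of the form
$$\frac{\lef(\Gamma_n; V)}{\vol(\Gamma_n^\sigma \backslash G^\sigma)} \longrightarrow c(G, \sigma, V), \qquad c(G,\sigma,V) \ne 0.$$
Heuristically, the $\sigma$-twisted Selberg/Arthur trace formula localizes $\sigma$-twisted orbital integrals on $\sigma$-fixed conjugacy classes, and the corresponding $L^2$-Lefschetz torsion becomes essentially the ordinary $L^2$-analytic torsion of $G^\sigma$ with coefficients in the restriction of $V$. The non-vanishing of $c(G,\sigma,V)$ is then exactly the Bergeron--Venkatesh non-vanishing criterion applied to $G^\sigma$, which holds precisely when $\delta(G^\sigma) = 1$ --- our hypothesis. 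Combining this limit with the combinatorial bound above and passing to $\limsup$ yields \eqref{coarsetorsion}.

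The principal obstacle is the spectral limit formula itself. In \cite{BV} the analogous statement reduced to Benjamini--Schramm convergence of $\Gamma_n \backslash G/K$ together with strong acyclicity; here the $\sigma$-twisting forces one to control the \emph{twisted} trace of the heat kernel, and the normalizing denominator $\vol(\Gamma_n^\sigma \backslash G^\sigma)$ is much smaller than $[\Gamma : \Gamma_n]$, so a substantially finer comparison is required. Concretely one must show that low-lying eigenvalues of the $\sigma$-twisted Laplacian contribute negligibly after this sharper normalization; the hypotheses that the $\Gamma_n$ are principal congruence of prime level and that $V$ is strongly twisted acyclic enter at precisely this step, furnishing the required spectral gap and the convergence of twisted orbital integrals along the fixed-point locus.
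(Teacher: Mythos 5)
Your sketch captures the overall architecture of the paper's argument --- twisted analytic torsion, the equivariant Cheeger--M\"uller theorem of \cite{Lip1}, a twisted trace-formula limit giving the twisted $L^2$-torsion as the limiting constant, and the criterion $\delta(G^\sigma)=1$ for its non-vanishing. But there is a genuine gap at the central inequality
$$|\lef(\Gamma_n;V)| \;\le\; C \sum_j \log |H^j(\Gamma_n,\mathcal{O})|.$$
The equivariant Cheeger--M\"uller comparison does \emph{not} deliver the twisted Reidemeister torsion as a clean alternating sum of $\sigma$-twisted torsion orders in $H^*(\Gamma_n,\mathcal{O})$. The translation from $\log RT_\sigma$ to cohomology (the paper's Theorem~\ref{concretert}, quoting \cite{Lip1}) carries an error term of size
$$O\bigl(\log|H^*(\M_n,\loc)[p^\infty]| + \log|H^*(\M_n,\loc_{\mathbb{F}_p})| + \log|H^*(\M_n^\sigma,\loc_{\mathbb{F}_p})|\bigr),$$
and the third summand is the mod-$p$ cohomology of the \emph{fixed-point locus} $\M_n^\sigma = \Gamma_n^\sigma\backslash X^\sigma$. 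This is not a subquotient of $H^*(\Gamma_n,\mathcal{O})$ and is not a priori bounded by $\sum_j\log|H^j(\Gamma_n,\mathcal{O})|$: its mod-$p$ Betti numbers could grow like $\vol(\M_n^\sigma)$ and swamp everything. So the inequality as stated is not established, and the non-vanishing of the limit does not by itself force $\sum_j\log|H^j|$ to grow.

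The paper closes this gap with a case analysis and a Smith-theoretic input (Corollary~\ref{unrefinedcohomologygrowth}): if the $p^\infty$-torsion of $H^*(\M_n,\loc)$ already grows linearly in $\vol(\M_n^\sigma)$, one is done; if instead $\dim_{\mathbb{F}_p}H^*(\M_n^\sigma,\loc_{\mathbb{F}_p})$ grows, then Smith theory (Bredon) gives $\dim_{\mathbb{F}_p}H^*(\M_n^\sigma,\loc_{\mathbb{F}_p}) \le \dim_{\mathbb{F}_p}H^*(\M_n,\loc_{\mathbb{F}_p})$, and since $\loc$ is rationally acyclic this forces large $p$-torsion in $H^*(\M_n,\mathcal{O})$ by universal coefficients; only when both error terms are $o(\vol(\M_n^\sigma))$ does one invoke the refined limit identity. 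Note also a secondary issue: strong twisted acyclicity kills the twisted Lefschetz numbers but does \emph{not} by itself force $H^*(\M_n,\loc_{\mathbb{C}})=0$, so your claim that the ``transcendental regulator term vanishes'' needs the paper's separate hypothesis of rational acyclicity (Corollary~\ref{refinedcohomologygrowth}); otherwise the terms $R^i(\M_n,\loc)$ must be addressed.
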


\medskip
\noindent
{\it Example.} Let $E/\mathbb{Q}$ be an imaginary quadratic extension. Let $B$ be a division algebra of dimension 9 over $\mathbb{Q}$ such that $B$ is split at infinity and $B_E := B \otimes_{\mathbb{Q}} E$ is a division algebra. Let $\mathfrak{o}$ be a maximal order in $B$ and let $\mathfrak{o}_E$ be its tensor product over $\mathbb{Z}$ with the ring of integers of $E$. Then $\mathfrak{o}_E^{\times}$ embeds into $\mathrm{PGL}_3 (\mathbb{C})$. Let $\mathcal{O}$ be the set of elements in $\mathfrak{o}_E$ of trace $0$, considered as a $\mathfrak{o}_E^{\times}$-module by conjugation. Let $\Gamma$ be a torsion free congruence subgroup of $\mathfrak{o}_E^{\times}$ such that $\Gamma$ is contained in the kernel of $\rho$ mod $2$. Then the local system $\mathcal{O}_{\mathbb{F}_2}$ is trivial. Given a prime $q,$ we denote by $\Gamma_{q}$ the kernel of the reduction map $\Gamma \mapsto \left( \mathfrak{o}_E / q \mathfrak{o}_E \right)^{\times} $. Theorem \ref{T:14} applies to the sequence $\{ \Gamma_{q} \}$ and we conclude that
\begin{equation} \label{growthEx}
\limsup \frac{1}{q^8} \sum_j \log |H^j(\Gamma_q, \mathcal{O}) | > 0 .
\end{equation}
Here $q^8$ is the growth rate of the log of torsion in $H^3$ of the corresponding $q$-congruence subgroups of $\mathfrak{o}^{\times}$ embedded into $\mathrm{PGL}_3 (\mathbb{R})$. The cohomology classes that contribute to \eqref{growthEx} should conjecturally arise by base change transfer over $\mathbb{Z}$. One may regard \eqref{growthEx} as a partial evidence for the existence of such a transfer.

\medskip

Similarly, one can construct examples of lattices $\Gamma$ in $\mathrm{SL}_2 (\mathbb{C})^p$ ($p>1$ prime), in $\mathrm{SL}_3 (\mathbb{C})$ or in $\mathrm{SL}_4 (\mathbb{C})$ such that the torsion homology of level $q$ congruence subgroups of $\Gamma$ grows exponentially with respectively $q^6$, $q^8$ or $q^{15}$. 

\subsection{} Analogously to \eqref{torsioneulerchar}, we prove \eqref{coarsetorsion} by proving a limiting identity for torsion Lefschetz numbers.  For example, when $\sigma^2 = 1,$ conditional on an assumption about the growth of the Betti numbers $\dim_{\mathbb{F}_2} H^j(\Gamma_n^{\sigma}, \mathcal{O}_{\mathbb{F}_2})$ we prove that
\begin{equation} \label{torsionlef}
\frac{\sum (-1)^j (\log|H^j(\Gamma_n, \mathcal{O})^{+}_{\tors}| - \log |H^j(\Gamma_n, \mathcal{O})^{-}_{\mathrm{tors}}|) }{|H^1 (\sigma , \Gamma_n)| \vol (\Gamma_n^{\sigma} \backslash G^{\sigma})} \rightarrow c(G,\rho,\sigma),
\end{equation}
where the superscript $\pm$ denotes the $\pm 1$ eigenspace.  

Assume that the maximal compact subgroup $K \subset G$ is $\sigma$-stable and let $X = G/K$ and $X^{\sigma} =G^{\sigma} / K^{\sigma}$. The proof of \eqref{torsionlef} crucially uses the equivariant Cheeger-M\"{u}ller theorem, proven by Bismut-Zhang \cite{BZ2}.  This allows us to compute the left side of \eqref{torsionlef} (up to a controlled integer multiple of $\log 2$) by studying the eigenspaces of the Laplace operators of the metrized local system associated to $\rho$ together with their $\sigma$ action.  More precisely, the left side of \eqref{torsionlef} nearly equals the equivariant analytic torsion $\log T^{\sigma}_{\Gamma_n \backslash X} (\rho)$; see \eqref{TT} for a definition of the latter.  Using the simple twisted trace formula and results of Bouaziz \cite{Bouaziz}, we prove a `limit multiplicity formula.'

\begin{thm}
Assume (1) and (2) above. Then we have:
\begin{equation} \label{limitmult}
\frac{\log T^{\sigma}_{\Gamma_n \backslash X}(\rho)}{|H^1 (\sigma , \Gamma_n)| \vol (\Gamma_n^{\sigma} \backslash G^{\sigma})} \rightarrow  s 2^{r} t^{(2)}_{X^{\sigma}} (\rho),
\end{equation}
where $\mathbb{E} = \RR^s$ or $\C^s$, and $r = 0$ in the first case and $r=\mathrm{rank}_{\RR} \G (\C) - \mathrm{rank}_{\RR} \G (\R)$ in the second case. 
\end{thm}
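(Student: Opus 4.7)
The strategy is to realize $\log T^\sigma_{\Gamma_n\backslash X}(\rho)$ as a Mellin transform of a $\sigma$-twisted alternating heat trace, apply the simple twisted trace formula spectrally for each $t$, and extract the identity contribution in the limit $n\to\infty$. Writing
$$K_n(t) = \sum_q (-1)^q\, q\, \operatorname{tr}\bigl(\sigma \cdot e^{-t\Delta_{n,q}}\bigr),$$
one has $2\log T^\sigma_{\Gamma_n\backslash X}(\rho) = \frac{d}{ds}\big|_{s=0} \frac{1}{\Gamma(s)}\int_0^\infty t^{s-1} K_n(t)\,dt$. Strong $\sigma$-twisted acyclicity of $V$ yields a uniform spectral gap, so the large-$t$ tail of $K_n(t)$ decays exponentially with a rate independent of $n$, and the small-$t$ behavior is governed by the local heat-kernel expansion.

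For each fixed $t>0$, unfolding the heat kernel on $\Gamma_n \backslash X$ expresses $K_n(t)$ as a sum over $\sigma$-twisted conjugacy classes in $\Gamma_n$:
$$K_n(t) = \sum_{[\gamma]_\sigma} \vol\bigl(\Gamma_{n,\gamma\sigma}\backslash G_{\gamma\sigma}\bigr) \int_{G_{\gamma\sigma}\backslash G} h_t(g^{-1}\gamma\sigma g)\, dg,$$
where $h_t$ is the alternating sum of heat kernels on $\wedge^\bullet \mathfrak{p}^*\otimes V$ and $G_{\gamma\sigma}$ denotes the centralizer. The hypothesis $H^1(\sigma,G)=\{1\}$ together with standard Galois-cohomological bookkeeping matches $\sigma$-twisted conjugacy classes in $\Gamma_n$ with ordinary conjugacy classes in $\Gamma_n^\sigma$, with overcounting measured by $|H^1(\sigma,\Gamma_n)|$; for $\gamma \in \Gamma_n^\sigma$, the centralizer of $\gamma\sigma$ in $G$ coincides with the centralizer of $\gamma$ in $G^\sigma$. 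Since the $\Gamma_n^\sigma$ are principal congruence subgroups of growing level, their injectivity radius in $X^\sigma$ tends to infinity, and the Gaussian bounds on $h_t$ force all non-identity contributions to be $o\bigl(|H^1(\sigma,\Gamma_n)|\vol(\Gamma_n^\sigma\backslash G^\sigma)\bigr)$.

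The surviving identity contribution is
$$\vol(\Gamma_n^\sigma \backslash G^\sigma) \int_{G^\sigma\backslash G} h_t(g^{-1}\sigma g)\,dg,$$
a $\sigma$-twisted orbital integral at the identity on $G$. Here Bouaziz's work becomes essential: expanding this integral via the $\sigma$-twisted Plancherel formula on $G$ and applying his transfer identity for normalized invariant integrals rewrites it as the ordinary Plancherel expansion on $G^\sigma$ of the base-changed test function, whose logarithmic Mellin derivative is, by definition, $t^{(2)}_{X^\sigma}(\rho)$. The extra factor $s$ for $\mathbb{E}=\R^s$ reflects that $\sigma$ cyclically permutes the $s$ copies of $\mathbf{G}(\R)$ so that only the $\sigma$-invariant (essentially diagonal) representations contribute and each with multiplicity $s$; the additional $2^r$ for $\mathbb{E}=\C^s$ arises from Bouaziz's comparison between the $2^r$ non-conjugate Cartan subgroups of $\mathbf{G}(\R)$ that enter the Plancherel inversion of $\mathbf{G}(\C)$.

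The principal obstacle is the uniform interchange of $\lim_{n\to\infty}$ with the Mellin differentiation at $s=0$. The large-$t$ regime is controlled by the uniform spectral gap from strong twisted acyclicity, as in \cite{BV}. The small-$t$ regime requires uniform bounds of the form $C_N e^{-c/t}|H^1(\sigma,\Gamma_n)|\vol(\Gamma_n^\sigma\backslash G^\sigma)$ on the non-identity geometric contributions --- the natural twisted analogue of the Benjamini--Schramm/limit-multiplicity estimate. The second delicate point is the verification that the Bouaziz transfer constant is exactly $s\cdot 2^r$: a bookkeeping-heavy calculation with Weyl integration formulae on $G$ and $G^\sigma$ and carefully compatible Haar-measure normalizations.
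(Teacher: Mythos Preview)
Your overall architecture --- Mellin transform of the twisted alternating heat trace, geometric expansion via the simple twisted trace formula, isolation of the identity contribution, and the large-$t$/small-$t$ interchange argument controlled by strong twisted acyclicity and Gaussian heat-kernel bounds --- is precisely the paper's strategy (Theorem~\ref{twistedtorsionlimitmultiplicity} and its proof). The paper likewise uses $H^1(\sigma,G)=\{1\}$ to collapse all $\delta\in Z^1(\sigma,\Gamma_n)$ to a single orbital integral at $\sigma$, counted with multiplicity $|H^1(\sigma,\Gamma_n)|$; note, though, that this is not a matching with \emph{ordinary} conjugacy classes in $\Gamma_n^\sigma$ as you write, but simply the observation that every such $\delta$ is $G$-conjugate to $\sigma$.

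Where you genuinely diverge is in identifying the identity orbital integral with $s\cdot 2^r\cdot t^{(2)}_{X^\sigma}(\rho)$, and here your justification is too loose. The paper does \emph{not} extract both constants from Bouaziz's transfer. For $\mathbb{E}=\R^s$ (\S 6) the computation is purely geometric: the twisted orbital integral of the product heat kernel collapses to an $s$-fold self-convolution $H_t^{\rho_0,a}*\cdots*H_t^{\rho_0,a}(e)=H_{st}^{\rho_0,a}(e)$, and the factor $s$ falls out of the time rescaling $t\mapsto st$ in the Mellin integral --- no representation theory enters. For $\mathbb{E}=\C$ (\S 7) the paper computes, representation by representation, the twisted $(\g,K)$-torsion $\mathrm{Lef}'(\sigma,F,I_{\mu,\lambda})$ via Frobenius reciprocity and Delorme's explicit Lefschetz-number formula (Proposition~\ref{P:delorme}); the factor $2^{\dim\mathfrak{t}}$ is Delorme's constant, arising from the structure of $\sigma$-discrete representations, not from counting Cartan subgroups. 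Bouaziz is used only to extend the twisted Plancherel expansion to the Schwartz class and to align stable characters with their base-change lifts. Your claim that $2^r$ counts ``non-conjugate Cartan subgroups of $\mathbf{G}(\R)$ that enter the Plancherel inversion of $\mathbf{G}(\C)$'' is not how the constant appears in the paper and is not obviously correct as stated; absent the Delorme input (or an equivalent computation), this step is a genuine gap.
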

Here $t^{(2)}_{X^{\sigma}} (\rho)$ is the (usual) $L^2$-analytic torsion of the symmetric space $X^{\sigma}$ twisted by the finite dimensional representation $\rho$. It is explicitly computed in \cite{BV}. Note that it is non-zero if and only if $\delta (G^{\sigma}) =1$.

The authors hope that the limit multiplicity formula \eqref{limitmult} together with the twisted endoscopic comparison implicit in Section 7 will be of interest independent of torsion in cohomology.  These computations complement work by Borel-Labesse-Schwermer \cite{BLS} and Rohlfs-Speh \cite{RohlfsSpeh}.

\medskip

{\it The authors would like to thank Abederrazak Bouaziz, Laurent Clozel, Colette Moeglin and David Renard for helpful conversations. They also would to thank an anonymous referee for pointing out an error in the first version of this paper.}

\medskip

\section{The simple twisted trace formula}
Let $\mathbf{G}$ be a connected 
semisimple quasi-split algebraic group defined over $\RR$. Let $\mathbb{E}$ be an \'{e}tale $\R$-algebra such that $\mathbb{E} / \RR$ is a cyclic Galois
extension with Galois group generated by $\sigma \in \Aut(\mathbb{E} / \R).$ The automorphism $\sigma$ induces a corresponding automorphism of the group $G$ of real points of $\mathrm{Res}_{\mathbb{E}/ \RR} \mathbf{G}$. We furthermore choose a Cartan involution $\theta$ of $G$
that commutes with $\sigma$ and denote by $K$ the group of fixed points of $\theta$ in $G$. Here we follow Labesse-Waldspurger \cite{LabesseWaldspurger}.

\subsection{Twisted spaces} We associate to these data the {\it twisted space} 
$$\widetilde{G} = G \rtimes \sigma \subset G \rtimes \mathrm{Aut} (G).$$
The left action of $G$ on $G$,
$$(g , x \rtimes \sigma ) \mapsto gx \rtimes \sigma,$$
turns $\widetilde{G}$ into a left principal homogeneous $G$-space equipped with a 
$G$-equivariant map $\mathrm{Ad} : \widetilde{G} \to \mathrm{Aut} (G)$ given by 
$$\mathrm{Ad} (x \rtimes \sigma) (g) = \mathrm{Ad} (x) (\sigma (g)).$$ 
We also have a right action of $G$ on $\widetilde{G}$ by 
$$\delta g = \mathrm{Ad} (\delta ) (g) \delta \quad (\delta \in \widetilde{G} , \ g \in G ).$$
This allows to define an action by conjugation of $G$ on $\widetilde{G}$ and yields a notion of $G$-conjugacy class in $\widetilde{G}$. 
Note that taking $\delta = 1 \rtimes \sigma$ we have: 
$$(1 \rtimes \sigma) g = \delta g = \sigma (g) \delta = \sigma (g) \rtimes \sigma.$$ 

We similarly define the twisted space $\widetilde{K} = K \rtimes \sigma$. 

\subsection{Twisted representations} 
A {\it representation of $\widetilde{G}$}, in a vector space $V$, is the data
for every $\delta \in \widetilde{G}$ of a invertible linear map 
$$\widetilde{\pi} (\delta) \in \mathrm{GL} (V)$$
and of a representation of $G$ in $V$:
$$\pi : G \to \mathrm{GL} (V)$$
such that for $x,y \in G$ and $\delta \in \widetilde{G}$,
$$\widetilde{\pi} (x \delta y) = \pi (x) \widetilde{\pi} (\delta ) \pi (y).$$
In particular 
$$\widetilde{\pi} (\delta x) = \pi ( \mathrm{Ad} (\delta ) (x)) \widetilde{\pi} (\delta).$$
Therefore $\widetilde{\pi} (\delta )$ intertwines $\pi$ and $\pi \circ  \mathrm{Ad} (\delta )$. Note that $\widetilde{\pi}$ determines $\pi$; we will
say that $\pi$ is the restriction of $\widetilde{\pi}$ to $G$. 

Conversely $\widetilde{\pi}$ is determined by the data of $\pi$ and of an operator $A$ which intertwines
$\pi$ and $\pi \circ \sigma$:
$$A \pi (x)  =(\pi \circ \sigma ) (x) A$$
and whose $p$-th power is the identity, where $p$ is the order of $\sigma$.
We reconstruct $\widetilde{\pi}$ by setting 
$$\widetilde{\pi} (x \rtimes \sigma ) = \pi (x) A \quad \mbox{for } x \in G.$$

Say that $\widetilde{\pi}$ is \emph{essential} if $\pi$ is irreducible. 
If $\widetilde{\pi}$ is unitary and essential, Schur's lemma implies that $\pi$ determines $A$
up to a $p$-th root of unity. 

There is a natural notion of equivalence between representations of $\widetilde{G}$ --- see e.g. \cite[\S 2.3]{LabesseWaldspurger}. This is the obvious one;
beware however that even if $\widetilde{\pi}$ is essential the class of $\pi$ does not determine the class of $\widetilde{\pi}$ since the intertwiner 
$A$ is only determined up to a root of unity.

We have a corresponding notion of a $(\mathfrak{g} , \widetilde{K})$-module.

If $\widetilde{\pi}$ is unitary and $f \in C_c^{\infty} (\widetilde{G})$ we set 
$$\widetilde{\pi} (f) = \int_{\widetilde{G}} f( y) \widetilde{\pi} (y) dy := \int_{G} f (x\rtimes \sigma)  \widetilde{\pi} (x \rtimes \sigma) dx.$$
It follows from \cite[Lemma 2.3.2]{LabesseWaldspurger} that $\widetilde{\pi} (f)$ is of trace class. Moreover:
$\mathrm{trace} \ \widetilde{\pi} (f) =0$ unless $\widetilde{\pi}$ is essential. 
In the following, we denote by $\Pi (\widetilde{G})$ the set of irreducible unitary representations $\pi$ of $G$ (considered up to equivalence) that
can be extended to some (twisted) representation $\widetilde{\pi}$ of $\widetilde{G}$. Note that the extension is not unique. 

\subsection{Twisted trace formula (in the cocompact case)} \label{TTF1}
Let $\Gamma$ be a cocompact lattice of $G$ that is $\sigma$-stable. Associated to $\Gamma$ is the (right) regular representation
$\widetilde{R}_\Gamma$ of $\widetilde{G}$ on $L^2(\Gamma \backslash G )$ where the restriction $R_\Gamma$ of $\widetilde{R}_\Gamma$ is the usual 
regular representation in $L^2(\Gamma \backslash G)$ and 
$$(\widetilde{R}_\Gamma (\sigma )) (f) (\Gamma x) = f( \Gamma \sigma (x)).$$
Note that
$$(\widetilde{R}_\Gamma (\sigma ) R_\Gamma (g )) (f) (\Gamma x) = f( \Gamma \sigma (x) g) = (R_\Gamma  (\sigma (g)) \widetilde{R}_\Gamma (\sigma ))(f) (\Gamma x).$$

Given $\delta \in \widetilde{G}$ we denote by $G^\delta$ its centralizer in $G$ (for the (twisted) 
action by conjugation of $G$ on $\widetilde{G}$). Corresponding to $\Gamma$ is a (non-empty) discrete twisted subspace 
$\widetilde{\Gamma} \subset \widetilde{G}$. Given $\delta \in \widetilde{\Gamma}$ we denote by $\{ \delta \}$ its $\Gamma$-conjugacy class (where here again
$\Gamma$ acts by (twisted) conjugation on $\widetilde{\Gamma}$). 

Let $f \in C_c^{\infty} (\widetilde{G})$. The twisted trace formula is obtained by computing the trace of $\widetilde{R}_\Gamma (f)$ in two different ways. It takes the following form (the LHS is the spectral side and the RHS is the geometric side):
\begin{equation} \label{TTF}
\sum_{\pi \in \Pi (\widetilde{G})} m (\pi , \widetilde{\pi} , \Gamma) \  \mathrm{trace} \ \widetilde{\pi} (f) = \sum_{\{ \delta \}} \vol (\Gamma^{\delta} \backslash G^{\delta}) \int_{G^{\delta} \backslash G} f(x^{-1} \delta x) d \dot{x}. 
\end{equation}
Here $\widetilde{\pi}$ is some extension of $\pi$ to a twisted representation of $\widetilde{G}$ and 
\begin{align*}
m (\pi , \widetilde{\pi} , \Gamma) &= \sum_{\widetilde{\pi} ' | G = \pi} \lambda (\widetilde{\pi} ' , \widetilde{\pi}) m (\widetilde{\pi} ') \\
&= \mathrm{trace}\left(\sigma | \Hom_G(\widetilde{\pi}, L^2(\Gamma \backslash G)) \right), 
\end{align*}
where $m (\widetilde{\pi} ')$ is the multiplicity 
of $\widetilde{\pi} '$ in $\widetilde{R}_\Gamma$ and $\lambda (\widetilde{\pi} ' , \widetilde{\pi}) \in \C^{\times}$ is the scalar s.t. for all $\delta \in \widetilde{G}$, we have 
$\widetilde{\pi} ' (\delta ) = \lambda (\widetilde{\pi} ' , \widetilde{\pi}) \widetilde{\pi} (\delta)$.\footnote{Note that $m (\pi , \widetilde{\pi} , \Gamma) \  \mathrm{trace} \ \widetilde{\pi} (f)$ does not depend on the chosen particular extension 
$\widetilde{\pi}$ but only on $\pi$.} Note that $\lambda (\widetilde{\pi} ' , \widetilde{\pi})$ is in fact a $p$-th root of unity.

The definition of $ \mathrm{trace} \ \widetilde{\pi} (f)$ depends on a choice of a Haar measure $dx$ on $G$. On the geometric side the volumes  
$\vol (\Gamma^{\delta} \backslash G^{\delta})$ depend on choices of Haar measures on the groups $G^{\delta}$. We will make precise 
choices later on. For the moment we just note that the measure $d\dot{x}$ on the quotient $G^{\delta} \backslash G$ is normalized by:
$$\int_{G} \phi (x) dx = \int_{G^{\delta} \backslash G} \int_{G^{\delta}} \phi (gx) dg d\dot{x}.$$

\subsection{Galois cohomology groups $H^1 (\sigma , \Gamma)$} \label{par:C}
Let $Z^1 (\sigma , \Gamma) = \{ \delta \in \widetilde{\Gamma} \; : \; \delta^p = \sigma\}$; it is invariant by conjugation by $\Gamma$. We denote by $H^1 (\sigma , \Gamma)$ 
the quotient of $Z^1 (\sigma , \Gamma)$ by the equivalence relation defined by conjugation by elements of $\Gamma$. We have similar definitions when $\Gamma$
is replaced by $G$. 

We will assume that 
\begin{equation} \label{condition}
H^1 (\sigma , G ) = \{1 \}.
\end{equation}
Note that if $\mathbb{E} = \RR^p,$ condition \eqref{condition} is always satisfied. Indeed: in that case $G = \G (\RR)^p$ and an element 
$$(g_1 , \ldots , g_p) \rtimes \sigma \in \widetilde{G}$$
belongs to $Z^1 (\sigma , G)$ if and only if $g_1 g_2 \cdots g_p = e$. But then there is an equality 
$$\sigma (g_1 , g_1 g_2 , \ldots , g_1 \cdots g_p )^{-1} (g_1 , g_1 g_2 , \ldots , g_1  \cdots g_p) = (g_1 , \ldots , g_p).$$
Equivalently, $(g_1 , \ldots , g_p) \rtimes \sigma$ is conjugated to $\sigma$ in $\widetilde{G}$ by some element in $G$. 

We furthermore note that $H^1 (\C / \R , \mathrm{SL}_n (\C)) = H^1 (\C / \R , \mathrm{Sp}_n (\C )) = \{1\}$, see e.g. \cite[Chap. X]{Serre}. 
Therefore, condition \eqref{condition} holds if $\G$ is a product of factors $\mathrm{SL}_n$ or $\mathrm{Sp}_n$ or of factors whose group of real points
is isomorphic to a complex Lie group viewed as a real Lie group. 

\subsection{} Under assumption \eqref{condition}, the map 
$$H^1 (\sigma , \Gamma) \to H^1 (\sigma , G)$$
necessarily has trivial image. In other words: if $\delta$ represents a class in $H^1 (\sigma , \Gamma)$ then $\delta$ is conjugate to $\sigma$ by some element of $G$. In particular,
we have
$$\vol (\Gamma^{\delta} \backslash G^{\delta}) = \vol (\Gamma^{\sigma} \backslash G^{\sigma}) \mbox{ and }  \int_{G^{\delta} \backslash G} 
f (x^{-1} \delta x) d\dot{x} =   \int_{G^{\sigma} \backslash G} 
f (x^{-1} \sigma x) d\dot{x}.$$
We may therefore write the geometric side of the twisted trace formula as: 
\begin{equation} \label{TTF2}
|H^1 (\sigma , \Gamma) | \vol (\Gamma^{\sigma} \backslash G^{\sigma}) \int_{G^{\sigma} \backslash G} 
f (x^{-1} \sigma x) d\dot{x} + \sum_{\substack{\{\delta \} \\ \delta \notin Z^1 (\sigma , \Gamma)}} \vol (\Gamma^{\delta} \backslash G^{\delta}) \int_{G^{\delta} \backslash G} 
f (x^{-1} \delta x) d\dot{x}.
\end{equation}

\subsection{Finite dimensional representations of $\widetilde{G}$} \label{def:SA}
Note that the complexification of $G$ may be identified with the complex points of $\mathrm{Res}_{\mathbb{E} / \RR} \mathbf{G}$, i.e. 
$\mathbf{G} (\C)^p$. Every complex finite dimensional $\sigma$-stable irreducible representation $(\widetilde{\rho} , F)$ of $\widetilde{G}$ can therefore be realized in a space $F = F_0^{\otimes p}$ where $(\rho_0 , F_0)$ is an irreducible complex linear representation of $\G (\C).$  The action of 
$G$ is defined by the tensor product action $\rho_0^{\otimes p}$ if $\mathbb{E} = \RR^p$ and by 
$\otimes_{i=1}^{p/2} (\rho_0 \otimes \bar{\rho}_0)$, where $\bar{\rho}_0$ is obtained by composing the complex conjugation 
in $\G (\C)$ by $\rho_0$, if $\mathbb{E} = \C^{p/2}$. In both cases, we choose the action of $\sigma$ on $F = F_0^{\otimes p}$ to be the cyclic permutation $A: x_1 \otimes \ldots \otimes x_p \mapsto x_p \otimes x_1 \otimes \ldots \otimes x_{p-1}$. Note that 
$$\mathrm{trace} (\sigma \; | \;  F ) = \dim F_0.$$

Let $\g$ be the Lie algebra of $\G.$ Say that $(\widetilde{\rho} , F)$ is {\it strongly twisted acyclic}  
if there is a positive constant $\eta$ depending only on $F$ such that: for every irreducible unitary $(\g , \widetilde{K})$-module $V$ for which 
$$\mathrm{trace} (\sigma \; | \;  C^j (\g (\C) , K , V\otimes F)) \neq 0$$
for some $j \leq \dim \G(\C),$ the inequality 
$$\Lambda_F - \Lambda_V \geq \eta$$
is satisfied.  Here $\Lambda_F$, resp. $\Lambda_V$, is the scalar by which the Casimir acts on $F$, resp. $V$.

Write $\nu$ for the highest weight of $F_0$. The following lemma can be proven analogously to \cite[Lemma 4.1]{BV}.

\begin{lem} 
Suppose that $\nu$ is not preserved by the Cartan involution $\theta.$  Then $\widetilde{\rho}$ is strongly twisted acyclic.
\end{lem}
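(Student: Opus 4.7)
The proof follows the strategy of \cite[Lemma 4.1]{BV}, adapted both to the chain-level statement and to the twisted setting. The key preliminary observation is that $\mathrm{trace}(\sigma \mid C^j) \neq 0$ implies $C^j(\g(\C), K, V \otimes F) \neq 0$, so it suffices to establish the Casimir gap $\Lambda_F - \Lambda_V \geq \eta$ under this chain-nonvanishing hypothesis, with no further use of $\sigma$.

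The restriction-of-scalars construction exhibits $V$ and $F$ as $p$-fold tensor products of irreducibles for $\G$ (with $p$ the order of $\sigma$), on which $\sigma$ cyclically permutes the factors (up to complex conjugation in the $\mathbb{E} = \C^s$ case). Writing $F = F_0^{\otimes p}$ and $V = W^{\otimes p}$ for some irreducible unitary $(\g_0(\C), K_0)$-module $W$, where $\g_0 = \Lie \G$ and $K_0$ is a maximal compact of $\G(\R)$ (or $\G(\C)$, according to the case), we have $\Lambda_F = p \Lambda_{F_0}$, $\Lambda_V = p \Lambda_W$, and
\begin{equation*}
C^j(\g(\C), K; V \otimes F) \;\cong\; \bigoplus_{j_1 + \cdots + j_p = j} \bigotimes_{i=1}^p C^{j_i}(\g_0(\C), K_0; W \otimes F_0).
\end{equation*}
Nonvanishing of $C^j$ therefore forces nonvanishing of each factor $C^{j_i}(\g_0(\C), K_0; W \otimes F_0)$ for some partition. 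Applying (the chain-level form of) \cite[Lemma 4.1]{BV}, the hypothesis $\theta \nu \neq \nu$ yields a uniform $\eta_0 > 0$ with $\Lambda_{F_0} - \Lambda_W \geq \eta_0$; the underlying reason is the Vogan-Zuckerman classification, which implies that a cohomological $W \cong A_{\mathfrak{q}}(\lambda)$ would force $\nu$ to be $\theta$-invariant on the compact Cartan, contradicting the hypothesis. Summing yields $\Lambda_F - \Lambda_V \geq p \eta_0 =: \eta$.

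The main obstacle I expect is the chain-level strengthening of BV's Casimir gap: their argument is typically formulated at the level of $(\g_0, K_0)$-cohomology, and extending to cochains requires controlling the highest $K_0$-type of any $W$ that can contribute to $C^{j_i} \neq 0$ via the finitely many $K_0$-types appearing in $\wedge^{j_i} \p_0^* \otimes F_0^*$, then invoking classical bounds on the infinitesimal characters of irreducible unitary $(\g_0, K_0)$-modules containing a fixed $K_0$-type to bound $\Lambda_W$ from above uniformly. The hypothesis $\theta\nu \neq \nu$ excludes the boundary case $\Lambda_W = \Lambda_{F_0}$, which produces the needed positive gap.
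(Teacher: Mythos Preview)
The paper offers no proof beyond the pointer to \cite[Lemma 4.1]{BV}, and your reduction --- forget $\sigma$ once it has forced $C^j \neq 0$, split $C^j$ as a tensor product over the simple factors of $G$, and invoke BV on a single factor --- is exactly the intended argument.

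Two small corrections. First, your ``main obstacle'' is a non-issue: BV's notion of strong acyclicity, and their Lemma 4.1, are already formulated at the cochain level (equivalently, via Kuga's lemma, as a uniform spectral gap for the form Laplacian), so no strengthening from cohomology to cochains is required. Second, in the case $\mathbb{E} = \C^s$ your bookkeeping slips: the single factor of $G$ is $\G(\C)$, an essential $V$ decomposes as an $s$-fold (not $p$-fold) external tensor power $W^{\boxtimes s}$ of a unitary irreducible for $\G(\C)$, and the coefficient module on each factor is $F_0 \otimes \overline{F}_0$ rather than $F_0$. One must then check that $\theta\nu \neq \nu$ on $\G(\R)$ implies the BV hypothesis for $F_0 \otimes \overline{F}_0$ on $\G(\C)$; since the complexified Cartan involution of $\G(\C)$ acts on the highest weight $(\nu,\nu) \in \h_0(\C)^* \times \h_0(\C)^*$ by $(\mu_1,\mu_2) \mapsto (\theta\mu_2,\theta\mu_1)$, this is immediate.
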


\section{Lefschetz number and twisted analytic torsion}

Let $G$, $\sigma$ and $\Gamma$ be as in \S \ref{TTF1} and let  $(\tilde{\rho} , F)$ be a complex finite dimensional 
$\sigma$-stable irreducible representation of $\widetilde{G}$.  
We denote by $\g$ be the Lie algebra of $G$. 

\subsection{Twisted $(\g (\C) , K)$-cohomology and Lefschetz number} \label{lefschetznumber}
We can define an action of $\sigma$ on each cohomology group $H^i (\Gamma \backslash X , F)$ and thus define a {\it Lefschetz number}
$$\mathrm{Lef} (\sigma , \Gamma , F) = \sum_i (-1)^i \tr \ ( \sigma \; | \; H^i (\Gamma \backslash X , F)).$$

If $V$ is a $(\g , \widetilde{K})$-module, we have a natural action of $\sigma$ on the space of $(\g , K)$-cochains $C^\bullet (\g (\C) , K , V)$ which induces an action on the quotient $H^\bullet (\g (\C) , K , V)$. We denote by 
$$\mathrm{trace} \ (\sigma \; | \; H^\bullet (\g  , K , V))$$
the trace of the corresponding operator. We then define the {\it Lefschetz number} of $V$ by
$$\mathrm{Lef} (\sigma , V ) = \sum_i (-1)^i \mathrm{trace} (\sigma \; | \; H^i (\g  , K , V)).$$
If $F$ is a finite dimensional representation of $\widetilde{G}$ then $F \otimes V$ is still a $(\g  , \widetilde{K})$-module; we denote by $\mathrm{Lef}(\sigma , F , V)$ its Lefschetz number. 

Labesse \cite[Sect. 7]{Labesse} proves that there exists a compactly supported function $L_{\rho} \in C_c^{\infty} (\widetilde{G})$ such that for every 
essential admissible representation $(\widetilde{\pi} , V)$ of $\widetilde{G}$ one has
$$\mathrm{Lef} (\sigma , F , V) = \tr \ \widetilde{\pi} (L_{\rho}).$$
The function $L_{\rho}$ is called the {\it Lefschetz function} for $\sigma$ and $(\tilde{\rho}, F)$. 

We then have:
\begin{equation} 
\begin{split}
\tr \ \widetilde{R}_{\Gamma} (L_{\rho}) & = \sum_i (-1)^i \tr ( \sigma \; | \; H^i (\Gamma \backslash X , F)) \\
& = \mathrm{Lef} (\sigma , \Gamma , F) .
\end{split}
\end{equation}

\subsection{Twisted heat kernels}
Let
$$H_t^{\rho , i} \in \left[ C^{\infty} (G) \otimes \mathrm{End} (\wedge^i (\g / \k)^* \otimes F) \right]^{K \times K}$$
be the heat kernel for $L^2$-forms of degree $i$ with values in the bundle associated to $(\rho , F)$. Note that 
we have a natural action of $\sigma$ on $\wedge^i (\g / \k)^* \otimes F$; we denote by $A_{\sigma}$ the corresponding linear operator and let
$$h_t^{\rho, i, \sigma} : x \rtimes \sigma \mapsto \tr ( H_t^{\rho , i} (x) \circ A_{\sigma}) .$$

Eventually we shall apply the twisted trace formula to $h_t^{\rho, i, \sigma}$.  The heat kernel $H_t^{\rho , i}$ is not compactly supported.  However, it follows from 
\cite[Proposition 2.4]{BM} that it belongs to all Harish-Chandra Schwartz spaces $\mathcal{C}^q \otimes \mathrm{End} (\wedge^i (\g / \k)^* \otimes F)$, $q>0.$  This
is enough to ensure absolute convergence of both sides of the twisted trace formula.

\begin{lem} \label{L1}
Let $\widetilde{\pi}$ be an essential admissible representation of $\widetilde{G}$ and let $V$ be its associated $(\g  , \widetilde{K})$-module. We have:
$$ \tr \ \widetilde{\pi} (h_t^{\rho , i, \sigma}) = e^{t(\Lambda_V - \Lambda_F)} \tr (\sigma | [\wedge^i (\g / k )^* \otimes F \otimes V]^K).$$
\end{lem}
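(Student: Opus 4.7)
The plan is to unfold the definition of $\widetilde{\pi}(h_t^{\rho,i,\sigma})$ and reduce to the standard Matsushima--Kuga computation of the heat operator as a scalar on the $(\g,K)$-cochain complex, with the involution $\sigma$ carried through as an extra piece of bookkeeping.

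First, writing $A = \widetilde{\pi}(1 \rtimes \sigma)$ so that $\widetilde{\pi}(x \rtimes \sigma) = \pi(x) A$, the integral defining $\widetilde{\pi}(h_t^{\rho,i,\sigma})$ unfolds as
\begin{equation*}
\tr \, \widetilde{\pi}(h_t^{\rho,i,\sigma}) = \int_G \tr\bigl(H_t^{\rho,i}(x) \circ A_\sigma\bigr) \, \tr_V\bigl(\pi(x) A\bigr) \, dx.
\end{equation*}
Using $\tr(X)\,\tr(Y) = \tr(X \otimes Y)$ and pulling $A_\sigma \otimes A$ outside the integral, this rewrites as
\begin{equation*}
\tr \, \widetilde{\pi}(h_t^{\rho,i,\sigma}) = \tr_{\wedge^i(\g/\k)^* \otimes F \otimes V} \bigl[ T \circ (A_\sigma \otimes A) \bigr], \qquad T := \int_G H_t^{\rho,i}(x) \otimes \pi(x) \, dx,
\end{equation*}
convergence being guaranteed by the Harish-Chandra estimates recalled just above the lemma.

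The substantive step is the analysis of $T$. The $K \times K$-equivariance of the heat kernel, $H_t^{\rho,i}(k_1 x k_2^{-1}) = \rho_K(k_1) H_t^{\rho,i}(x) \rho_K(k_2)^{-1}$, combined with left and right $K$-invariance of Haar measure, gives
\begin{equation*}
(\rho_K(k) \otimes \pi(k)) \circ T = T = T \circ (\rho_K(k) \otimes \pi(k)) \qquad \text{for all } k \in K.
\end{equation*}
Averaging these identities against $dk$ yields $T = P_K T P_K$, where $P_K$ denotes the orthogonal projection onto $[\wedge^i(\g/\k)^* \otimes F \otimes V]^K$. This $K$-invariant subspace is precisely the cochain space $C^i(\g(\C), K, V \otimes F)$, and the restriction of $T$ there is the heat operator for the associated cochain Laplacian. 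By Kuga's formula the latter Laplacian acts as the scalar $\Lambda_F - \Lambda_V$, so $T = e^{t(\Lambda_V - \Lambda_F)} P_K$ as an operator on the full space $\wedge^i(\g/\k)^* \otimes F \otimes V$.

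To finish: $K$ is $\sigma$-stable and $A$ intertwines $\pi$ with $\pi \circ \sigma$, so $A_\sigma \otimes A$ preserves $[\wedge^i(\g/\k)^* \otimes F \otimes V]^K$ and there realizes precisely the action of $\sigma$ appearing in the statement. Substituting $T = e^{t(\Lambda_V - \Lambda_F)} P_K$ into the trace identity of the second paragraph then yields the asserted formula. The only nontrivial analytic input is Kuga's formula, which is classical (compare the untwisted computation used in \cite{BV}); the main thing to verify is that the intertwiner $A$ can be commuted past the heat kernel cleanly enough to reduce to the $\sigma$-action on $K$-invariants, which is exactly what the two-sided $K$-equivariance above achieves.
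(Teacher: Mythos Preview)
Your proof is correct and follows essentially the same route as the paper: both reduce to the operator $T=\int_G H_t^{\rho,i}(x)\otimes\pi(x)\,dx$, identify it via $K\times K$-equivariance and Kuga's lemma with $e^{t(\Lambda_V-\Lambda_F)}P_K$, and then observe that $\sigma$ preserves the $K$-invariants. The only cosmetic difference is that you invoke the identity $\tr(X)\,\tr(Y)=\tr(X\otimes Y)$ directly, whereas the paper unwinds the same step by expanding in orthonormal bases of $V$ and $\wedge^i(\g/\k)^*\otimes F$.
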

\begin{proof} It follows from the $K\times K$ equivariance of $H_t^{\rho , i}$ and Kuga's Lemma that relative to the splitting
$$\wedge^i (\g  / k )^* \otimes F \otimes V = [\wedge^i (\g  / k )^* \otimes F \otimes V]^K \oplus \left( [\wedge^i (\g / k )^* \otimes F \otimes V]^K \right)^{\perp},$$
we have:
$$\pi (H_t^{\rho , i}) = \left(
\begin{array}{cc}
e^{t(\Lambda_V - \Lambda_F)} \mathrm{Id} & 0 \\
0 & 0 
\end{array} \right).$$
We furthermore note that this decomposition is $\sigma$-invariant since $K$ is $\sigma$-stable. We conclude that we have:
$$\widetilde{\pi} (H_t^{\rho ,i}) := \int_{\G (\C)} (\pi (g)\circ A) \otimes (H_t^{\rho, i} (g) \circ A_{\sigma}) dg =  \left(
\begin{array}{cc}
e^{t(\Lambda_V - \Lambda_F)} A_{\sigma} & 0 \\
0 & 0 
\end{array} \right).$$
Here $\pi$ is the restriction of $\widetilde{\pi}$ to $G$ and $A$ is the intertwining operator between $\pi$ and $\pi \circ \sigma$ that determines $\widetilde{\pi}$. 

Now let $\{\xi_n \}_{n \in \N}$ and $\{e_j \}_{j=1, \ldots , m}$ be orthonormal bases of
$V$ and $\wedge^i (\g  / k )^* \otimes F$, respectively. Then we have:
\begin{equation*}
\begin{split}
\tr \ \widetilde{\pi} (H_t^{\rho , i}) & = \sum_{n=1}^{\infty} \sum_{j=1}^m \langle \widetilde{\pi} (H_t^{\rho ,i}) (\xi_n \otimes e_j ) , (\xi_n \otimes e_j ) \rangle \\
& = \sum_{n=1}^{\infty} \sum_{j=1}^m \int_{G} \langle (\pi (g) \circ A) \xi_n , \xi_n \rangle \langle (H_t^{\rho ,i} (g) \circ A_{\sigma}) e_j  , e_j  \rangle dg \\
& = \sum_{n=1}^{\infty} \int_{G} \langle (\pi (g) \circ A) \xi_n , \xi_n \rangle h_t^{\rho , i , \sigma} (g \rtimes \sigma) dg \\
& = \tr \ \widetilde{\pi} (h_t^{\rho , i, \sigma}).
\end{split}
\end{equation*}
The lemma follows.
\end{proof}

Denoting by $H_t^0 \in \left[ C^{\infty} (G) \otimes \mathrm{End} (\wedge^0 (\g / \k)^*) \right]^{K \times K}$ 
the heat kernel for $L^2$-functions on $X$, the following proposition follows from \cite[Proposition 5.3]{MuellerPfaff} and the definition of strong twisted acyclicity. 

\begin{prop} \label{P:larget}
Assume that $(\widetilde{\rho} , F)$ is strongly twisted acyclic. Then there exist positive constants $\eta$ and $C$ such that for every $x \in G$, $t \in (0, +\infty)$ and 
$i \in \{0 , \ldots , \dim X \}$, one has:
$$|h_t^{\rho , i , \sigma} (x \rtimes \sigma) | \leq C e^{-\eta t} H_t^0 (x).$$
\end{prop}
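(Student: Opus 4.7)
The plan is to follow the strategy of \cite[Proposition 5.3]{MuellerPfaff}, modified so that strong twisted acyclicity (rather than strong acyclicity) produces the exponential gain $e^{-\eta t}$. The key insight is that the $\sigma$-trace automatically reduces us to essential representations, and strong twisted acyclicity is precisely the hypothesis tailored to control those.

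First, I would spectrally decompose $H_t^{\rho,i}$ via Harish-Chandra's Plancherel formula, writing it as an integral over the tempered dual $\widehat{G}_{\rm temp}$ whose integrand at a tempered irreducible $\pi$ is, by Kuga's Lemma, the scalar $e^{t(\Lambda_\pi - \Lambda_F)}$ times matrix coefficients of $\pi$ pulled back to the $K$-invariants $[\wedge^i(\g/\k)^* \otimes F \otimes V_\pi]^K$. Composing pointwise with $A_\sigma$ and tracing --- this is essentially the computation underlying Lemma \ref{L1}, carried out on the kernel rather than smeared against a test function --- produces an extra factor $\tr\bigl(\sigma \mid [\wedge^i(\g/\k)^* \otimes F \otimes V_\pi]^K\bigr)$ in the integrand. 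In particular, only essential $\pi \in \Pi(\widetilde{G})$ contribute, since for non-essential $\pi$ this trace factor vanishes.

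Next I would invoke the strong twisted acyclicity hypothesis: whenever the trace factor in the preceding integrand is nonzero, $\Lambda_F - \Lambda_\pi \geq \eta$, and hence $e^{t(\Lambda_\pi - \Lambda_F)} \leq e^{-\eta t}$ uniformly on the essential support of the integrand. Factoring $e^{-\eta t}$ out, the remaining integrand is bounded (by a constant depending only on the operator norm of $A_\sigma$ on $\wedge^i(\g/\k)^* \otimes F$, which is finite since $A_\sigma$ is unitary for the natural $K$-invariant inner product) by the integrand appearing in the analogous Plancherel expansion of the scalar heat kernel $H_t^0(x)$. Comparing yields the desired bound $|h_t^{\rho,i,\sigma}(x \rtimes \sigma)| \leq C e^{-\eta t} H_t^0(x)$.

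The main obstacle is establishing absolute and uniform convergence of these pointwise Plancherel expansions on $G$ (and $G \rtimes \sigma$) uniformly in $t > 0$. This is the technical heart of \cite[Proposition 5.3]{MuellerPfaff}, which combines the matrix coefficient estimates of Barbasch--Moscovici \cite{BM} with sufficient decay of the Plancherel density in the discrete parameters. Since insertion of the bounded intertwiner $A_\sigma$ and passage to $\sigma$-traces leaves all such estimates unchanged --- only the restriction to essential $\pi$ and the replacement of the dimension of $K$-invariants by its $\sigma$-trace is new --- the untwisted argument transfers verbatim once strong twisted acyclicity is used in place of strong acyclicity.
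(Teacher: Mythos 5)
Your proof is correct and follows the same route the paper intends: the paper simply asserts that Proposition~\ref{P:larget} "follows from \cite[Proposition 5.3]{MuellerPfaff} and the definition of strong twisted acyclicity," and your proposal fills in precisely the mechanism --- the Plancherel expansion of the twisted heat kernel as in M\"uller--Pfaff, the observation that the $\sigma$-trace factor $\mathrm{trace}\,(\sigma\,|\,[\wedge^i(\g/\k)^*\otimes F\otimes V_\pi]^K)$ restricts the support to essential $\pi$ (so the strong twisted acyclicity gap $\Lambda_F-\Lambda_\pi\geq\eta$ applies), and the comparison of the residual integrand with that of $H_t^0$ using Barbasch--Moscovici, which is insensitive to the insertion of the bounded operator $A_\sigma$. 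This matches the paper's intended argument.
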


We define the kernel $k_t^{\rho, \sigma}$ by
$$k_t^{\rho , \sigma} (g) = \sum_i (-1)^i i h_t^{\rho , i , \sigma} (g);$$
it defines a function in $\mathcal{C}^q (\widetilde{G})$, for all $q>0$.

\subsection{Twisted analytic torsion}
The {\it twisted
analytic torsion} $T_{\Gamma \backslash X}^{\sigma} (\rho)$ is then defined by
\begin{equation} \label{TT}
\log T_{\Gamma \backslash X}^{\sigma} (\rho) = \frac12 \frac{d}{ds} {}_{\mid_{s= 0}} \left( \frac{1}{\Gamma (s)} \int_0^{\infty} t^{s-1} \left[ \tr \ \widetilde{R}_{\Gamma} (k_t^{\rho ,\sigma}) - 
\sum_i (-1)^i i \cdot \tr ( \sigma \; | \; H^i (\Gamma \backslash X , F)) \right] dt \right).
\end{equation}
Note that if $(\widetilde{\rho} , F)$ is strongly twisted acyclic each $\tr ( \sigma \; | \; H^i (\Gamma \backslash X , F))$ is trivial. From now on we will assume that $(\widetilde{\rho} , F)$ is strongly twisted acyclic.  
In particular, we have:
$$\log T_{\Gamma \backslash X}^{\sigma} (\rho) = \frac12 \frac{d}{ds} {}_{|s= 0} \left( \frac{1}{\Gamma (s)} \int_0^{\infty} t^{s-1} \tr \ \widetilde{R}_{\Gamma} (k_t^{\rho ,\sigma}) dt \right).$$

\subsection{Twisted $(\g , K)$-torsion} 
If $V$ is a $(\g , \widetilde{K})$-module and $F$ is a finite dimensional representation of $\widetilde{G}$ then $F \otimes V$ is still a $(\g , \widetilde{K})$-module; we define the 
{\it twisted $(\g , K)$-torsion} of $F \otimes V$ by 
\begin{equation*}
\begin{split}
\mathrm{Lef} {}' (\sigma , F , V) & = \sum_i (-1)^i i \ \mathrm{trace} (\sigma \; | \; C^i (\g , K , F \otimes V)) \\
& = \sum_i (-1)^i i \ \mathrm{trace} (\sigma \; | \; [\wedge^i (\g / \k)^*  \otimes F \otimes V]^K).
\end{split}
\end{equation*} 

\medskip
\noindent
{\it Remark.} We should explain the notation $\mathrm{Lef}'$. Given a group $G$ and a $G$-vector space $V$, we denote by $\det[1-V]$ the virtual $G$-representation
(that is to say, element of $K_0$ of the category of $G$-representations) defined by the alternating sum $\sum_i (-1)^i [\wedge^i V]$ of exterior powers. 
This is multiplicative in an evident sense:
\begin{equation} \label{add1} 
\det[1 - V \oplus W] = \det[1-V] \otimes \det[1-W].
\end{equation} 
Now given $g \in G$, the derivative $\frac{d}{dt} {}_{|_{t=1}} \det (t1 - g)$
is equal to the character of $g$ acting on $\sum_{i} (-1)^i i \wedge^i V$. We therefore define $\detp[1-V] = \sum_{i} (-1)^i i \wedge^i V$.

Considering the virtual $\widetilde{K}$-representation $\detp [1- (\g  / \k)^*]$ we have: 
\begin{equation} \label{Lef'}
\mathrm{Lef} {}' (\sigma , F , V) = \mathrm{trace} \left( \sigma \; | \; \left[ \detp [1 - (\g  / \k)^*] \otimes F \otimes V\right]^K \right).
\end{equation}
This explains our notation $\mathrm{Lef}'$ for the twisted $(\g  , K)$-torsion. 

For future reference we note that we have:
\begin{equation} \label{add} 
\detp[1 - V \oplus W] = \detp[1-V] \otimes \det[1-W] \oplus \det[1-V] \otimes \detp[1-W].
\end{equation} 

We also note that  Labesse's proof of the existence of $L_{\rho}$ can be immediately modified to get a function 
$L_{\rho}' \in C_c^{\infty} (\widetilde{G})$ such that for every 
essential admissible representation $(\widetilde{\pi} , V)$ of $\widetilde{G}$ one has
$$\mathrm{Lef}' (\sigma , F , V) = \tr \ \widetilde{\pi} (L_{\rho}').$$

\subsection{} It follows from Lemma \ref{L1} that the spectral side
of the twisted trace formula evaluated in $k_t^{\rho , \sigma}$ is
\begin{equation}
\tr \ \widetilde{R}_{\Gamma} (k_t^{\rho ,\sigma}) = 
\sum_{\pi \in \Pi (\widetilde{G})} m (\pi , \widetilde{\pi} , \Gamma ) \mathrm{Lef}' (\sigma , F , V_{\pi}) e^{t(\Lambda_{\pi} - \Lambda_F)},
\end{equation}
where $V_{\pi}$ is the $(\g  , \widetilde{K})$-module associated to the extension $\widetilde{\pi}$.

\section{$L^2$-Lefschetz number, $L^2$-torsion and limit formulas}

Let $G$, $\sigma$ and $\Gamma$ be as in \S \ref{TTF1} and let  $(\tilde{\rho} , F)$ be as in the preceding sections. 
Let $f \in C_c^{\infty} (\widetilde{G})$. 

\subsection{} 
Given $g \in G$ we define $r(g) = \mathrm{dist}(gK,eK)$ with respect to the Riemannian symmetric distance of $X=G/K$. 
We extend $r$ to $G \rtimes \langle \sigma \rangle$ by setting 
$r(g \rtimes \sigma) = r(g)$. Note that $r(gg') \leq r(g) + r(g')$. 

Now given $x \in G$ we set $\ell (x) =  \inf \{ r(gxg^{-1}) \; : \; g \in G \}$; it only depends on the conjugacy class of $x$ in $G$. Recall that the {\it injectivity radius of $\Gamma$},
$$r_{\Gamma} = \frac12 \inf \{ \ell (\gamma ) \; : \; \gamma \in \Gamma - \{ e \} \},$$
is strictly positive. Now if $\delta \in \widetilde{\Gamma}$ with $\delta \notin  Z^1 (\sigma , \Gamma)$ then $\delta^p \in \Gamma - \{ 1\}$ and therefore $\ell (\delta^p) \geq 2r_{\Gamma}$. 
In particular for any $x \in G$ we have:
\begin{equation} \label{twistedinjrad}
2r_\Gamma \leq \ell (\delta^p) \leq r(x\delta^px^{-1}) = r( (x \delta x^{-1} ) \cdots (x \delta x^{-1})) \leq p \cdot r(x \delta x^{-1}).
\end{equation}

\begin{lem} \label{Lcount}
There exist constants $c_1,c_2 >0$, depending only on $G$, such that for any $x \in G$, we have:
$$N(x; R ) := |\{ \delta \in \widetilde{\Gamma} \; : \; \delta \notin  Z^1 (\sigma , \Gamma) \mbox{ and } r(x \delta x^{-1}) \leq R \} | \leq c_1 p^d r_\Gamma^{-d} e^{c_2 R} ,$$
where $d$ is the dimension of $X$.
\end{lem}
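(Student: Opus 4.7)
The plan is a volume-packing argument in the Riemannian symmetric space $X = G/K$. First I would translate the length condition into a geometric statement on $X$. Writing $\delta = \gamma \rtimes \sigma \in \widetilde{\Gamma}$, so that $x \delta x^{-1} = x \gamma \sigma(x^{-1}) \rtimes \sigma$, the $G$-invariance of the Riemannian distance on $X$ gives
$$r(x \delta x^{-1}) = \dist\!\bigl(eK,\, x \gamma \sigma(x^{-1}) K\bigr) = \dist\!\bigl(yK,\, \gamma \sigma(y) K\bigr), \qquad y := x^{-1}$$
(note that $\sigma$ descends to $X$ because $K$ is $\sigma$-stable). Hence each $\delta$ counted by $N(x; R)$ has an injective ``footprint'' $p(\delta) := \gamma \sigma(y) K$ lying in the ball $B(yK, R) \subset X$.

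Next I would show that these footprints are well separated. For distinct $\delta_i = \gamma_i \rtimes \sigma$ in $\widetilde{\Gamma}$,
$$\dist\!\bigl(p(\delta_1),\, p(\delta_2)\bigr) = r\!\bigl(\sigma(y)^{-1} \gamma_1^{-1} \gamma_2\, \sigma(y)\bigr) \geq \ell(\gamma_1^{-1} \gamma_2) \geq 2 r_{\Gamma},$$
the last inequality because $\gamma_1^{-1} \gamma_2 \in \Gamma \setminus \{e\}$. Consequently, as $\delta$ ranges over elements counted by $N(x; R)$, the open balls $B\bigl(p(\delta),\, r_\Gamma/p\bigr)$ are pairwise disjoint.

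Finally I would compare volumes. The disjoint balls $B(p(\delta),\, r_\Gamma/p)$ all sit inside $B(yK,\, R + r_\Gamma/p)$. Standard estimates in the nonpositively curved symmetric space $X$ of dimension $d$ give $\vol B(\cdot, \rho) \geq c\, \rho^d$ for small $\rho$ and $\vol B(\cdot, R) \leq C e^{c_2 R}$ for a constant $c_2$ depending only on $G$. Combining,
$$N(x; R) \leq \frac{\vol B(yK,\, R + r_\Gamma/p)}{\vol B(\cdot,\, r_\Gamma/p)} \leq c_1 p^d r_\Gamma^{-d} e^{c_2 R}.$$
No step is deep; the only choice worth highlighting is to pack with balls of the coarser radius $r_\Gamma/p$ (the twisted injectivity radius furnished by \eqref{twistedinjrad}) rather than $r_\Gamma$, which is exactly what produces the $p^d$ factor in the stated bound.
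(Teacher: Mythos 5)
Your proposal is correct and follows essentially the same volume‑packing argument as the paper: translate $r(x\delta x^{-1})\le R$ into a statement about the points $\gamma\sigma(y)K$ lying in a ball of radius $R$, use the injectivity radius $r_\Gamma$ to show the balls of radius $r_\Gamma/p$ around these points are disjoint, and compare volumes. The only cosmetic difference is that you spell out the separation inequality $\dist(p(\delta_1),p(\delta_2))\ge 2r_\Gamma$ and the centering point $y=x^{-1}$ more explicitly than the paper does.
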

\begin{proof}
It follows from \eqref{twistedinjrad} that it suffices to prove the lemma for $R \geq 2r_\Gamma/p$. Set $\varepsilon := r_\Gamma/p$. By definition of $r(x \delta x^{-1})$ we have 
$B(\gamma (\sigma (x)) , \varepsilon ) \subset B(x, R+\varepsilon)$, for all $\delta =  \gamma \rtimes \sigma \in \widetilde{\Gamma}$ with $\delta \notin  Z^1 (\sigma , \Gamma)$ and 
$r(x \delta x^{-1}) \leq R$. Now, since $\varepsilon < r_\Gamma$, the balls $B(\gamma (\sigma (x)) , \varepsilon )$, $\gamma \in \Gamma$, are all disjoint of the same volume. We conclude that  
$$N(x;R) \cdot \mathrm{vol} B (\sigma (x) , \varepsilon ) \leq \mathrm{vol} B( x , R + \varepsilon ) \leq \mathrm{vol} B (x, 2R).$$
We conclude using standard estimates on volumes of balls (see e.g. \cite[Lemma 7.21]{7samurai} for more details).
\end{proof}

\begin{prop} \label{WLF}
Let $\{\Gamma_n \}$ be a sequence of finite index $\sigma$-stable subgroups of $\Gamma$. Assume that for any 
$\delta \in \widetilde{\Gamma}$ with $\delta \notin Z^1 (\sigma , \Gamma)$ we have:
$$\lim_{n \to \infty} \frac{| \{ \gamma \in \Gamma_n \backslash \Gamma \; : \; \gamma \delta \gamma^{-1} \in \widetilde{\Gamma}_n \}|}{[\Gamma^\sigma : \Gamma_n^\sigma]} = 0.$$
Then 
$$\frac{\tr \ \widetilde{R}_{\Gamma_n} (f)}{|H^1 (\sigma , \Gamma_n) | \vol (\Gamma_n^{\sigma} \backslash G^{\sigma})} \to  \int_{G^{\sigma} \backslash G} f(x^{-1} \sigma x) d\dot{x}.$$
\end{prop}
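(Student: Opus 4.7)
The plan is to apply the twisted trace formula \eqref{TTF2} to $\Gamma_n$ and $f$, then show that the contributions of non-trivial twisted conjugacy classes are negligible relative to the normalization $|H^1(\sigma,\Gamma_n)|\vol(\Gamma_n^\sigma\backslash G^\sigma)$. Since the spectral side of the TTF equals $\tr \widetilde R_{\Gamma_n}(f)$, the identity \eqref{TTF2} gives
\begin{equation*}
\tr \widetilde R_{\Gamma_n}(f) = |H^1(\sigma,\Gamma_n)|\vol(\Gamma_n^\sigma\backslash G^\sigma)\int_{G^\sigma\backslash G}f(x^{-1}\sigma x)\,d\dot{x} + E_n,
\end{equation*}
where $E_n$ is the sum of orbital integrals over the $\Gamma_n$-conjugacy classes with representatives outside $Z^1(\sigma,\Gamma_n)$. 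Dividing by $|H^1(\sigma,\Gamma_n)|\vol(\Gamma_n^\sigma\backslash G^\sigma)$ extracts the claimed limit from the main term, so the task reduces to proving $E_n=o(\vol(\Gamma_n^\sigma\backslash G^\sigma))$.

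Next, I would regroup the $\Gamma_n$-conjugacy classes in $E_n$ by the coarser relation of $\Gamma$-conjugacy. Fix representatives $\{\delta_\alpha\}$ for the $\Gamma$-conjugacy classes in $\widetilde\Gamma\setminus Z^1(\sigma,\Gamma)$; since $\gamma\in\Gamma$ implies $\Gamma^{\gamma\delta_\alpha\gamma^{-1}}=\gamma\Gamma^{\delta_\alpha}\gamma^{-1}$, the $\Gamma_n$-orbits inside $[\delta_\alpha]_\Gamma\cap\widetilde\Gamma_n$ are parameterized by
\begin{equation*}
S_\alpha = \{[\gamma]\in\Gamma_n\backslash\Gamma/\Gamma^{\delta_\alpha}\,:\,\gamma\delta_\alpha\gamma^{-1}\in\widetilde\Gamma_n\}.
\end{equation*}
Conjugating each orbital integral from $\gamma\delta_\alpha\gamma^{-1}$ back to $\delta_\alpha$ renders the integral independent of $[\gamma]$, while the corresponding volume becomes $[\Gamma^{\delta_\alpha}:\gamma^{-1}\Gamma_n\gamma\cap G^{\delta_\alpha}]\vol(\Gamma^{\delta_\alpha}\backslash G^{\delta_\alpha})$. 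Assembling the contributions gives
\begin{equation*}
E_n = \sum_\alpha\vol(\Gamma^{\delta_\alpha}\backslash G^{\delta_\alpha})\left(\sum_{[\gamma]\in S_\alpha}[\Gamma^{\delta_\alpha}:\gamma^{-1}\Gamma_n\gamma\cap G^{\delta_\alpha}]\right)\int_{G^{\delta_\alpha}\backslash G}f(y^{-1}\delta_\alpha y)\,d\dot{y}.
\end{equation*}
The key observation is that the parenthesized inner sum measures the cardinality of the union of the double cosets in $S_\alpha$, viewed inside $\Gamma_n\backslash\Gamma$, so it equals precisely $|\{\gamma\in\Gamma_n\backslash\Gamma:\gamma\delta_\alpha\gamma^{-1}\in\widetilde\Gamma_n\}|$, which the hypothesis ensures is $o([\Gamma^\sigma:\Gamma_n^\sigma])$.

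Finally, the compact support of $f$ combined with Lemma \ref{Lcount} restricts the outer sum to finitely many $\alpha$: the condition $f(y^{-1}\delta_\alpha y)\neq 0$ for some $y$ forces $\ell(\delta_\alpha)$ to be bounded by a constant depending only on $\mathrm{supp}(f)$, and only finitely many $\Gamma$-conjugacy classes in $\widetilde\Gamma\setminus Z^1(\sigma,\Gamma)$ satisfy this bound. For each such $\alpha$, the prefactor $\vol(\Gamma^{\delta_\alpha}\backslash G^{\delta_\alpha})$ and the orbital integral are finite constants independent of $n$, so combining with $\vol(\Gamma_n^\sigma\backslash G^\sigma)=[\Gamma^\sigma:\Gamma_n^\sigma]\vol(\Gamma^\sigma\backslash G^\sigma)$ and the hypothesis gives $E_n/\vol(\Gamma_n^\sigma\backslash G^\sigma)\to 0$; the trivial bound $|H^1(\sigma,\Gamma_n)|\geq 1$ then concludes. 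The main technical obstacle is the bookkeeping identifying the inner sum with the combinatorial count of the hypothesis, which ultimately reduces to the identity $\gamma^{-1}\Gamma_n\gamma\cap G^{\delta_\alpha}=\gamma^{-1}\Gamma_n\gamma\cap\Gamma^{\delta_\alpha}$ (valid because $\gamma\in\Gamma$) together with the elementary fact that the size of the double coset $\Gamma_n\gamma\Gamma^{\delta_\alpha}$ inside $\Gamma_n\backslash\Gamma$ is exactly $[\Gamma^{\delta_\alpha}:\gamma^{-1}\Gamma_n\gamma\cap G^{\delta_\alpha}]$.
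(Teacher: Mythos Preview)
Your proof is correct and follows essentially the same route as the paper: apply \eqref{TTF2} to $\Gamma_n$, regroup the error term by $\Gamma$-conjugacy classes to produce the counting function $c_{\Gamma_n}(\delta)=|\{\gamma\in\Gamma_n\backslash\Gamma:\gamma\delta\gamma^{-1}\in\widetilde\Gamma_n\}|$, and use compact support together with Lemma~\ref{Lcount} to reduce to a finite sum. The paper states the regrouping identity without justification, whereas you supply the double-coset bookkeeping explicitly; otherwise the arguments coincide.
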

\begin{proof} It follows from \eqref{TTF2} that we have:
\begin{equation*} 
\begin{split}
\tr \ \widetilde{R}_{\Gamma_n} (f ) & = |H^1 (\sigma , \Gamma_n) | \vol (\Gamma_n^{\sigma} \backslash G^{\sigma}) \int_{G^{\sigma} \backslash G} 
f(x^{-1} \sigma x) d\dot{x} + \sum_{\substack{\{\delta \}_{\Gamma_n} \\ \delta \notin Z^1 (\sigma , \Gamma_n)}} \vol (\Gamma_n^{\delta} \backslash G^{\delta}) \int_{G^{\delta} \backslash G} f  (x^{-1} \delta x) d\dot{x} \\
& = |H^1 (\sigma , \Gamma_n) | \vol (\Gamma_n^{\sigma} \backslash G^{\sigma}) \int_{G^{\sigma} \backslash G} 
f(x^{-1} \sigma x) d\dot{x} + \sum_{\substack{\{\delta \}_\Gamma \\ \delta \notin Z^1 (\sigma , \Gamma)}} c_{\Gamma_n} (\delta ) \vol (\Gamma^{\delta} \backslash G^{\delta}) \int_{G^{\delta} \backslash G} f  (x^{-1} \delta x) d\dot{x}
\end{split}
\end{equation*}
where 
$$c_{\Gamma_n} (\delta) = | \{ \gamma \in \Gamma_n \backslash \Gamma \; : \; \gamma \delta \gamma^{-1} \in \widetilde{\Gamma}_n \}|.$$
Since $f$ is compactly supported, the last sum above is finite: choosing $R>0$ so that the support of $f$ is contained in 
$$B_R = \{ g \rtimes \sigma \in \widetilde{G} \; : \; r(g) \leq R\}$$
we may restrict the sum on the right side of the above equation to $\delta$ that are contained in $B_R$. It follows from Lemma \ref{Lcount} that the corresponding sum if finite.  
\end{proof}

\subsection{} Let $\{ \Gamma_n \}$ be a normal chain with $\cap_n \Gamma_n = \{ 1 \}.$  Then 
$r_{\Gamma_n} \to \infty$ as $n \to \infty$. The following lemma implies that the hypotheses of Proposition \ref{WLF} are satisfied
for $\{ \Gamma_n \}$.

\begin{lem} \label{normalchain}
Let $\delta \notin Z^1 (\sigma , \Gamma)$. If $r_{\Gamma_n} \geq \ell (\delta^p)$, then we have: 
$$| \{ \gamma \in \Gamma_n \backslash \Gamma \; : \; \gamma \delta \gamma^{-1} \in \widetilde{\Gamma}_n \}|= 0.$$
\end{lem}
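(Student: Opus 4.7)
My plan is to argue by contradiction, exploiting the fact that if $\delta \notin Z^1(\sigma, \Gamma)$ then $\delta^p$ is a nontrivial element of $\Gamma$, and any conjugate $\gamma \delta \gamma^{-1}$ that lies in $\widetilde{\Gamma}_n$ will have its $p$-th power sitting in $\Gamma_n - \{e\}$; this will be incompatible with the injectivity radius bound.

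Concretely, suppose $\gamma \in \Gamma$ is such that $\delta' := \gamma \delta \gamma^{-1} \in \widetilde{\Gamma}_n$. Writing $\delta' = \gamma' \rtimes \sigma$ with $\gamma' \in \Gamma_n$, I would first compute
\[
(\delta')^p = \gamma' \sigma(\gamma') \cdots \sigma^{p-1}(\gamma') \in \Gamma_n,
\]
using that $\Gamma_n$ is $\sigma$-stable. On the other hand, a direct computation inside $G \rtimes \langle \sigma \rangle$ gives $(\delta')^p = \gamma \delta^p \gamma^{-1}$ (conjugation in the big semidirect product). Since $\delta \notin Z^1(\sigma, \Gamma)$ means $\delta^p \neq e$, we conclude $(\delta')^p \in \Gamma_n - \{e\}$.

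Now I invoke the definition of the injectivity radius: for any nontrivial element of $\Gamma_n$,
\[
\ell((\delta')^p) \geq 2 r_{\Gamma_n}.
\]
Since $\ell$ is a conjugation-invariant on $G$, we have $\ell((\delta')^p) = \ell(\gamma \delta^p \gamma^{-1}) = \ell(\delta^p)$. Combining these with the hypothesis $r_{\Gamma_n} \geq \ell(\delta^p)$ yields
\[
\ell(\delta^p) \geq 2 r_{\Gamma_n} \geq 2 \ell(\delta^p),
\]
so $\ell(\delta^p) \leq 0$ and hence $r_{\Gamma_n} \leq 0$, contradicting the fact (recalled at the start of \S 4.1) that the injectivity radius of any cocompact lattice is strictly positive.

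I don't anticipate a real obstacle here; the argument is essentially a one-line synthesis of the two ingredients already highlighted in the surrounding discussion: the twisted injectivity bound \eqref{twistedinjrad} and the observation that $\delta \notin Z^1$ exactly encodes $\delta^p \neq e$. The only minor care needed is to verify the identity $(\delta')^p = \gamma \delta^p \gamma^{-1}$ inside $G \rtimes \langle \sigma \rangle$ and to note that membership of $\delta'$ in $\widetilde{\Gamma}_n$ (combined with $\sigma$-stability of $\Gamma_n$) really forces $(\delta')^p \in \Gamma_n$.
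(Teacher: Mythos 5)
Your argument is correct and follows the same route as the paper: pass to $p$-th powers, use that $\delta \notin Z^1(\sigma,\Gamma)$ is precisely the condition $\delta^p \neq e$, and compare the resulting nontrivial element $\gamma\delta^p\gamma^{-1}$ of $\Gamma_n$ against the injectivity radius. You have in fact been slightly more careful than the text: the definition $r_{\Gamma_n} = \tfrac12 \inf\{\ell(\gamma) : \gamma \in \Gamma_n \setminus \{e\}\}$ gives $\ell(\delta^p) \geq 2r_{\Gamma_n}$, as you use, whereas the paper's one-line proof writes only $\ell(\delta^p) \geq r_{\Gamma_n}$, which combined with the hypothesis $r_{\Gamma_n} \geq \ell(\delta^p)$ would yield equality rather than a contradiction; your retention of the factor $2$ closes that gap.
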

\begin{proof}
Indeed 
$$c_{\Gamma_n} (\delta)  \leq | \{ \gamma \in \Gamma_n \backslash \Gamma \; : \; \gamma \delta^p \gamma^{-1} \in \Gamma_n \} |$$
and if $\delta \notin Z^1 (\sigma , \Gamma)$ then $\delta^p \neq 1$. Finally, if 
$\gamma \delta^p \gamma^{-1} \in \Gamma_n$ is non-trivial then $\ell (\delta^p) \geq r_{\Gamma_n}$.
\end{proof}

\subsection{$L^2$-Lefschetz number} Proposition \ref{WLF} motivates the following definition of the {\it $L^2$-Lefschetz number}
associated to the triple $(G, \sigma, \rho)$:
$$\mathrm{Lef}^{(2)} (\sigma, X , F) = SO_{e \rtimes \sigma}(L_{\rho}) = \int_{G^{\sigma} \backslash G} L_\rho (x^{-1} \sigma x) d\dot{x}.$$

\begin{cor} \label{C:47}
Let $\{ \Gamma_n \}$ be a normal chain of finite index $\sigma$-stable subgroups of $\Gamma$ with $\cap_n \Gamma_n = \{ 1 \}$. Then we have:
$$\frac{\mathrm{Lef}(\sigma , \Gamma_n , F)}{|H^1 (\sigma , \Gamma_n) | \vol (\Gamma_n^{\sigma} \backslash G^{\sigma})} \to 
\mathrm{Lef}^{(2)} (\sigma, X , F) .$$
\end{cor}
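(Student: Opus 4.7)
The plan is to apply Proposition \ref{WLF} directly to the Lefschetz function $f = L_\rho$ associated to $(\tilde{\rho}, F)$ constructed by Labesse. The key facts to assemble are: (i) $L_\rho \in C_c^\infty(\widetilde{G})$ is compactly supported; (ii) by its defining property from Section \ref{lefschetznumber}, $\tr\,\widetilde{R}_{\Gamma_n}(L_\rho) = \mathrm{Lef}(\sigma, \Gamma_n, F)$; and (iii) by definition, $\int_{G^\sigma \backslash G} L_\rho(x^{-1}\sigma x)\,d\dot{x} = \mathrm{Lef}^{(2)}(\sigma, X, F)$. So the statement reduces to verifying the hypothesis of Proposition \ref{WLF} for the normal chain $\{\Gamma_n\}$.

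First I would observe that since $\bigcap_n \Gamma_n = \{1\}$ and each $\Gamma_n$ has finite index in $\Gamma$, the injectivity radii $r_{\Gamma_n}$ tend to infinity: for any $R>0$, only finitely many $\gamma \in \Gamma$ satisfy $\ell(\gamma) \leq 2R$, and by normality each such $\gamma$ eventually leaves $\Gamma_n$. Then I would apply Lemma \ref{normalchain}: for each fixed $\delta \in \widetilde{\Gamma}$ with $\delta \notin Z^1(\sigma, \Gamma)$, the element $\delta^p$ is a nontrivial element of $\Gamma$, so once $n$ is large enough that $r_{\Gamma_n} \geq \ell(\delta^p)$, the counting set
\[
\{\gamma \in \Gamma_n \backslash \Gamma \;:\; \gamma \delta \gamma^{-1} \in \widetilde{\Gamma}_n\}
\]
is empty. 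In particular the limit of $c_{\Gamma_n}(\delta)/[\Gamma^\sigma : \Gamma_n^\sigma]$ is zero (indeed the numerator vanishes for $n$ large), which is the hypothesis required by Proposition \ref{WLF}.

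Applying Proposition \ref{WLF} with $f = L_\rho$ then yields
\[
\frac{\tr\,\widetilde{R}_{\Gamma_n}(L_\rho)}{|H^1(\sigma, \Gamma_n)|\,\vol(\Gamma_n^\sigma \backslash G^\sigma)} \;\longrightarrow\; \int_{G^\sigma \backslash G} L_\rho(x^{-1}\sigma x)\,d\dot{x},
\]
which upon translating the numerator via (ii) and the right-hand side via (iii) gives precisely the claimed convergence to $\mathrm{Lef}^{(2)}(\sigma, X, F)$.

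There is essentially no obstacle here: the corollary is a formal consequence of Proposition \ref{WLF}, Lemma \ref{normalchain}, and Labesse's construction of the Lefschetz function. The only mild point to check is that the hypothesis of Proposition \ref{WLF} is verified for every $\delta \notin Z^1(\sigma, \Gamma)$ (not just those entering the geometric side of the trace formula for $L_\rho$), but this follows uniformly from Lemma \ref{normalchain} once $r_{\Gamma_n} \to \infty$. The real content of the corollary lies upstream, in Proposition \ref{WLF} and in the convergence estimate of Lemma \ref{Lcount}, which together reduce the twisted trace formula to its $\sigma$-conjugacy contribution in the large volume limit.
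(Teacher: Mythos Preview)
Your proof is correct and follows exactly the paper's approach: apply Proposition \ref{WLF} (with hypothesis verified via Lemma \ref{normalchain} and the fact that $r_{\Gamma_n}\to\infty$) to the Lefschetz function $L_\rho$. The paper's proof is the one-line version of what you wrote.
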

\begin{proof} Apply Proposition \ref{WLF} (and Lemma \ref{normalchain}) to the Lefschetz function $L_{\rho}$. 
\end{proof}

\subsection{Twisted $L^2$-torsion} Analogously we define the {\it twisted $L^2$-torsion} 
$T_{\Gamma \backslash X}^{(2) \sigma} (\rho) \in \R^+$ by 
\begin{equation}
\log T_{\Gamma \backslash X}^{(2) \sigma} (\rho) = |H^1 (\sigma , \Gamma) | \vol (\Gamma^{\sigma} \backslash G^{\sigma}) t_X^{(2) \sigma} (\rho) 
\end{equation}
where $t_X^{(2) \sigma} (\rho)$ --- which depends only on the symmetric space $X$, the involution $\sigma,$ and the finite dimensional representation
$\rho$ --- is defined by
\begin{equation} \label{t}
t_X^{(2) \sigma} (\rho)  = \frac12 \frac{d}{ds} {}_{|s=0} \left( \frac{1}{\Gamma (s)} \int_0^{\infty} \int_{G^{\sigma} \backslash G} 
k_t^{\rho , \sigma} (x^{-1} \sigma x) d\dot{x} t^{s-1} dt \right) .
\end{equation}

Note that $k_t^{\rho , \sigma}$ is not compactly supported and that we have to prove that the RHS of \eqref{t}
is indeed well defined. Recall however that $k_t^{\rho , \sigma}$ belongs to $\mathcal{C}^q (\widetilde{G})$. Lemma \ref{Lcount} therefore implies that the series 
$$\sum_{\delta \in \widetilde{\Gamma}} k_t^{\rho , \sigma} (x^{-1} \delta x) $$
converges absolutely and locally uniformly. This implies that the integral of this series along a (compact) fundamental domain $D$ for the action of $\Gamma$ on $G$ is absolutely convergent. Restricting the sum to the $\delta$'s that belong to the (twisted) $\Gamma$-conjugacy class of $\sigma$ we conclude in particular that, for every positive $t$, the integral
$$\int_{G^{\sigma} \backslash G} k_t^{\rho , \sigma} (x^{-1} \sigma x) d\dot{x} = \frac{1}{\mathrm{vol} (\Gamma^{\sigma} \backslash G^\sigma)} \int_{D} \sum_{\delta \in \{ \sigma \}} k_t^{\rho , \sigma} (x^{-1} \delta x) dx$$
is absolutely convergent. We postpone the proof of the fact that \eqref{t} is indeed well defined until sections 6 and 7 where we will explicitly compute $t_X^{(2) \sigma}
(\rho)$. In the course of the computations we will also prove the following lemma.

\begin{lem} \label{Lkt}
There exist constants $C,c>0$ such that 
\begin{equation}
\left| \int_{G^{\sigma} \backslash G} k_t^{\rho , \sigma} (x^{-1} \sigma x) d\dot{x} \right| \leq Ce^{-ct}, \quad t \geq 1.
\end{equation}
\end{lem}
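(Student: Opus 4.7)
The plan is to first apply Proposition \ref{P:larget} to reduce the lemma to a uniform-in-$t$ bound on a twisted orbital integral of the scalar heat kernel, and then to handle the latter by Harish-Chandra's Plancherel theory combined with Bouaziz's work on twisted orbital integrals.

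First I would exploit strong twisted acyclicity of $(\widetilde{\rho},F)$ via Proposition \ref{P:larget}: there exist constants $C_1,\eta>0$, independent of the degree $i$, such that
\begin{equation*}
|h_t^{\rho,i,\sigma}(x\rtimes\sigma)| \leq C_1 e^{-\eta t}\, H_t^0(x), \qquad x\in G,\ t>0.
\end{equation*}
Summing the at most $\dim X + 1$ terms defining $k_t^{\rho,\sigma}$, with coefficients $(-1)^i i$, and noting that in $\widetilde{G}$ one has $x^{-1}\sigma x = (x^{-1}\sigma(x))\rtimes\sigma$, this yields a constant $C_2$ with
\begin{equation*}
|k_t^{\rho,\sigma}(x^{-1}\sigma x)| \leq C_2 e^{-\eta t}\, H_t^0(x^{-1}\sigma(x)),\qquad x\in G.
\end{equation*}
Thus Lemma \ref{Lkt} reduces to showing that
\begin{equation*}
I(t) := \int_{G^\sigma\backslash G} H_t^0\bigl(x^{-1}\sigma(x)\bigr)\, d\dot x
\end{equation*}
is bounded uniformly for $t\geq 1$; the exponential factor $e^{-\eta t}$ then supplies the required decay with, say, $c = \eta/2$.

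Next I would bound $I(t)$ spectrally. Expanding $H_t^0$ via Harish-Chandra's Plancherel formula for $G$ and interchanging the integral over $G^\sigma\backslash G$ with the Plancherel integral identifies $I(t)$ with an integral of the $\sigma$-twisted orbital integrals of elementary spherical functions of $G$ at $\sigma$, weighted by $e^{-t\Lambda_\pi}\, d\mu_P(\pi)$. By Bouaziz's theory of twisted orbital integrals --- the same analytic input on which Sections 6 and 7 rely --- these twisted orbital integrals transfer to a tempered Plancherel-type measure on $\widehat{G^\sigma}_{\mathrm{temp}}$. Because the Laplace eigenvalues on the tempered dual stay in a half-line bounded below and the Plancherel density has polynomial growth, the resulting spectral integral is absolutely convergent and in fact of order $O(t^{-\dim X^\sigma / 2})$ as $t\to\infty$, hence bounded for $t\geq 1$.

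The main obstacle is rigorously justifying the interchange of the orbital and Plancherel integrals and, more substantively, carrying out the transfer of $\sigma$-twisted orbital integrals of spherical tempered characters of $G$ into a genuine Plancherel integral on $G^\sigma$. This is precisely the content of the explicit computation of $t^{(2)\sigma}_X(\rho)$ postponed by the authors to Sections 6 and 7 (via Bouaziz \cite{Bouaziz}); as announced in the paragraph preceding the statement of the lemma, Lemma \ref{Lkt} emerges as a by-product of those computations.
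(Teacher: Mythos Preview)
Your reduction via Proposition~\ref{P:larget} is valid and the overall strategy is sound, but it differs from the paper's route. The paper does not separate out the exponential factor pointwise; instead it computes $\int_{G^\sigma\backslash G} k_t^{\rho,\sigma}(x^{-1}\sigma x)\,d\dot x$ directly. In the product case $\mathbb{E}=\mathbb{R}^p$ (Section~6) this is elementary: the twisted orbital integral collapses to a $p$-fold convolution, yielding $(-1)^{a^2(p-1)} h_{pt}^{\rho_0,a}(e)$, and Lemma~\ref{Lkt} follows from standard untwisted heat-kernel estimates. In the case $\mathbb{E}=\mathbb{C}$ (Section~7), the paper applies Bouaziz's twisted inversion formula, extends it to the Harish--Chandra Schwartz space, combines it with Clozel's character identities and the computations of Propositions~\ref{P68} and~\ref{P610}, and arrives at the explicit spectral expression \eqref{maineq}; the exponential decay is then read off from (untwisted) strong acyclicity of $\rho_0$ on $G^\sigma$. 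Your approach still requires exactly this Bouaziz machinery to show that $I(t)$ is bounded for $t\ge 1$, so the detour through $H_t^0$ does not save work; what it does buy is a clean conceptual separation between the exponential decay (from strong twisted acyclicity) and the finiteness of the orbital integral (from tempered distribution theory).

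One caution on your spectral step: the phrasing ``expand $H_t^0$ via Plancherel for $G$ and interchange with the orbital integral over $G^\sigma\backslash G$'' is not how the argument can run, since twisted orbital integrals of individual spherical functions of $G$ do not converge. The correct procedure --- which is what the paper does and what you ultimately defer to --- is to apply Bouaziz's inversion formula (his Th\'eor\`eme~4.3, extended to Schwartz functions as in \S7) directly to the Schwartz-class kernel; this already expresses the twisted orbital integral as an integral of twisted tempered characters, from which the transfer to $\widehat{G^\sigma}_{\mathrm{temp}}$ and the desired bound follow.
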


Granted this we conclude this section by the proof of the following `limit multiplicity theorem'.

\begin{thm} \label{twistedtorsionlimitmultiplicity}
Assume that $(\widetilde{\rho} , F)$ is strongly twisted acyclic. Let $\{\Gamma_n \}$ be a sequence of finite index $\sigma$-stable subgroups of $\Gamma$. Assume that
there exists a constant $A$ s.t. for every 
$\delta \in \widetilde{\Gamma}$ with $\delta \notin Z^1 (\sigma , \Gamma)$ the sequence 
$$\left(\frac{| \{ \gamma \in \Gamma_n \backslash \Gamma \; : \; \gamma \delta \gamma^{-1} \in \widetilde{\Gamma}_n \}|}{[\Gamma^\sigma : \Gamma_n^\sigma]} \right)_{n \geq 0}$$
remains uniformly bounded by $A$ and converges to $0$ as $n$ tends to infinity.\footnote{Note that, in the untwisted case, the condition $$\frac{| \{ \gamma \in \Gamma_n \backslash \Gamma \; : \; \gamma \delta \gamma^{-1} \in \Gamma_n \}|}{[\Gamma : \Gamma_n]} \to 0$$ is equivalent to the BS-convergence of the {\it compact} quotients $\Gamma_n \backslash X$ towards the 
symmetric space $X.$ See \cite{7samurai}.}
Then 
$$\frac{\log T_{\Gamma_n \backslash X}^{\sigma} (\rho)}{|H^1 (\sigma , \Gamma_n ) | \vol (\Gamma_n^{\sigma} \backslash G^{\sigma})} \to t_X^{(2) \sigma} (\rho) .$$
\end{thm}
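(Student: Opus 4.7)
The plan is to apply the twisted trace formula of \S\ref{TTF} to the twisted heat kernel $k_t^{\rho,\sigma}$, compare term by term with the definition of $t_X^{(2)\sigma}(\rho)$, and then show that the error vanishes after Mellin transform and differentiation at $s=0$.

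\textbf{Step 1 (Geometric decomposition).} Write $C_n = |H^1(\sigma,\Gamma_n)|\vol(\Gamma_n^\sigma\backslash G^\sigma)$ and $\phi(t)=\int_{G^\sigma\backslash G} k_t^{\rho,\sigma}(x^{-1}\sigma x)\,d\dot x$. Arguing as in the proof of Proposition \ref{WLF}, the geometric side \eqref{TTF2} applied to $k_t^{\rho,\sigma}$ gives
\[
\tr\,\widetilde R_{\Gamma_n}(k_t^{\rho,\sigma}) = C_n\,\phi(t) + R_n(t), \qquad R_n(t)=\sum_{\substack{\{\delta\}_\Gamma\\ \delta\notin Z^1(\sigma,\Gamma)}} c_{\Gamma_n}(\delta)\,\vol(\Gamma^\delta\backslash G^\delta)\int_{G^\delta\backslash G} k_t^{\rho,\sigma}(x^{-1}\delta x)\,d\dot x.
\]
Taking the Mellin transform, dividing by $C_n$, and using the definitions of $T_{\Gamma_n\backslash X}^\sigma(\rho)$ and $t_X^{(2)\sigma}(\rho)$ (which are well-defined by the discussion following \eqref{t} and Lemma \ref{Lkt}) reduces the theorem to showing
\[
\mathcal{E}_n := \frac{1}{2}\,\frac{d}{ds}\bigg|_{s=0}\frac{1}{\Gamma(s)}\int_0^\infty t^{s-1}\,\frac{R_n(t)}{C_n}\,dt \; \longrightarrow \; 0.
\]

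\textbf{Step 2 (Pointwise vanishing of $R_n/C_n$).} Using $C_n \geq [\Gamma^\sigma:\Gamma_n^\sigma]\vol(\Gamma^\sigma\backslash G^\sigma)$, the hypothesis forces $c_{\Gamma_n}(\delta)/C_n \to 0$ for each fixed $\delta\notin Z^1(\sigma,\Gamma)$. For fixed $t>0$ the full series defining $R_n(t)$ is absolutely convergent (this is the absolute convergence of the geometric side for $k_t^{\rho,\sigma}\in\mathcal{C}^q(\widetilde G)$, controlled by Lemma \ref{Lcount}), and the hypothesis provides a $t$-independent dominant. Dominated convergence then yields $R_n(t)/C_n \to 0$ pointwise in $t$.

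\textbf{Step 3 (Uniform bounds in $n$).} The main obstacle is to produce, uniformly in $n$, a bound on $|R_n(t)/C_n|$ which is integrable against $t^{s-1}dt/\Gamma(s)$ near $s=0$, both as $t\to 0$ and as $t\to\infty$. Using $c_{\Gamma_n}(\delta)/C_n \leq A/\vol(\Gamma^\sigma\backslash G^\sigma)$, it suffices to control $\sum_{\{\delta\}_\Gamma,\;\delta\notin Z^1(\sigma,\Gamma)}\vol(\Gamma^\delta\backslash G^\delta)\,|I_\delta(t)|$ where $I_\delta(t)$ denotes the orbital integral of $k_t^{\rho,\sigma}$ at $\delta$. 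For $t\geq 1$ we combine Proposition \ref{P:larget} with the Gaussian bound on $H_t^0$ and Lemma \ref{Lcount} to obtain $\leq C e^{-\eta t}$. For $t\leq 1$ the crucial input is the Gaussian off-diagonal decay of the heat kernel: every non-central orbit satisfies $\ell(\delta)\geq 2r_\Gamma/p>0$ by \eqref{twistedinjrad}, so $|I_\delta(t)|\lesssim t^{-d/2}\exp(-\ell(\delta)^2/(Ct))$, and Lemma \ref{Lcount} together with the standard heat-kernel summation (as in \cite{BM}, \cite{MuellerPfaff}) gives $\leq C' e^{-c/t}$.

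\textbf{Step 4 (Passing to the limit in Mellin).} The bound of Step 3 shows that $t\mapsto R_n(t)/C_n$ is flat at $t=0$ and exponentially small at $t=\infty$, uniformly in $n$. Consequently $\frac{1}{\Gamma(s)}\int_0^\infty t^{s-1} R_n(t)/C_n\,dt$ is entire in $s$ near $s=0$, with value and derivative at $s=0$ represented by absolutely convergent integrals. Dominated convergence with dominant from Step 3 and pointwise vanishing from Step 2 then give $\mathcal{E}_n\to 0$, concluding the proof.

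The heart of the argument, and the step I expect to be delicate, is the small-$t$ uniform bound in Step 3: one must ensure that the flatness obtained from the Gaussian decay of non-central orbital integrals survives the summation over conjugacy classes and the normalization by $C_n$. The uniform boundedness assumption on $c_{\Gamma_n}(\delta)/[\Gamma^\sigma:\Gamma_n^\sigma]$ by $A$ is precisely what makes the summation converge uniformly and allows the error to be dominated by a function independent of $n$.
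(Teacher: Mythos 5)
Your proposal is correct and follows essentially the same route as the paper: geometric side of the twisted trace formula applied to $k_t^{\rho,\sigma}$, pointwise vanishing of the normalized error term via the hypothesis, uniform bounds $O(e^{-c/t})$ for small $t$ (from Gaussian off-diagonal heat kernel decay plus the twisted injectivity radius bound \eqref{twistedinjrad} and Lemma \ref{Lcount}) and $O(e^{-\eta t})$ for large $t$ (from Proposition \ref{P:larget} and Lemma \ref{Lkt}), and finally dominated convergence after the Mellin regularization collapses to $\int_0^\infty R_n(t)/C_n\,dt/t$. Your reduction to $\mathcal{E}_n\to 0$ at the outset is a slightly cleaner packaging than the paper's, which first handles the tails and then argues uniform convergence on compact $t$-intervals, but the underlying estimates and the use of the uniform boundedness hypothesis (to produce an $n$-independent dominating function) are identical.
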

\begin{proof} Since $k_t^{\rho , \sigma} \in \mathcal{C}^q (\widetilde{G})$, for all $q>0$, we still have:
\begin{multline*}
\tr \ \widetilde{R}_{\Gamma_n} (k_t^{\rho ,\sigma}) = |H^1 (\sigma , \Gamma_n) | \vol (\Gamma_n^{\sigma} \backslash G^{\sigma}) \int_{G^{\sigma} \backslash G} 
k_t^{\rho ,\sigma} (x^{-1} \sigma x) d\dot{x} \\
+ \sum_{\substack{\{\delta \}_\Gamma \\ \delta \notin Z^1 (\sigma , \Gamma)}} c_{\Gamma_n} (\delta ) \vol (\Gamma^{\delta} \backslash G^{\delta}) \int_{G^{\delta} \backslash G} k_t^{\rho ,\sigma} (x^{-1} \delta x) d\dot{x}.
\end{multline*}
Note that at this point it is not clear that the sum on the right (absolutely) converges. This is however indeed the case: it first follows from \eqref{twistedinjrad} that if $\delta \notin  Z^1 (\sigma , \Gamma)$ and $x \in G$ we have $r(x \delta x^{-1}) \geq 2r_\Gamma/p$. 
Now recall from \cite[Lemma 3.8]{BV} or \cite[Proposition 3.1 and (3.14)]{MuellerPfaff} that we have, for $t \in (0, 1]$,
\begin{equation} \label{kt}
|k_t^{\rho , \sigma} (x^{-1} \delta x)| \leq  C t^{-d} \exp \left( -c \frac{r(x \delta x^{-1})^2}{t} \right) \leq C e^{-c'/t} \exp \left( - c'' r(x \delta x^{-1})^2\right).
\end{equation}
(Here $c'$ depends on $r_\Gamma$.) From this and Lemma \ref{Lcount}, it follows that the geometric side of the trace formula evaluated in $k_t^{\rho ,\sigma}$ indeed absolutely converges. 
Moreover, it follows from \eqref{kt} together with our uniform boundedness assumption that we have:
\begin{equation}
\tr \ \widetilde{R}_{\Gamma_n} (k_t^{\rho ,\sigma}) = |H^1 (\sigma , \Gamma_n) | \vol (\Gamma_n^{\sigma} \backslash G^{\sigma}) \int_{G^{\sigma} \backslash G} 
k_t^{\rho , \sigma} (x^{-1} \sigma x) d\dot{x}  + O (e^{-c' /t})
\end{equation}
for $0 < t \leq 1$. It follows that 
$$\int_0^1 t^{s-1} \left( \frac{\tr \ \widetilde{R}_{\Gamma_n} (k_t^{\rho ,\sigma}) }{|H^1 (\sigma , \Gamma_n) | \vol (\Gamma_n^{\sigma} \backslash G^{\sigma})} - 
\int_{G^{\sigma} \backslash G} 
k_t^{\rho ,\sigma} (x^{-1} \sigma x) d\dot{x} \right) dt$$
is holomorphic is $s$ in a half-plane containing $0$, so 
\begin{multline*}
\frac12 \frac{d}{ds} {}_{|s=0}  \frac{1}{\Gamma (s)} \int_0^{+\infty} t^{s-1} \left( \frac{\tr \ \widetilde{R}_{\Gamma_n} (k_t^{\rho ,\sigma}) }{|H^1 (\sigma , \Gamma_n) | \vol (\Gamma_n^{\sigma} \backslash G^{\sigma})} - 
\int_{G^{\sigma} \backslash G} 
k_t^{\rho ,\sigma} (x^{-1} \sigma x) d\dot{x} \right) dt \\
= \int_0^{+\infty} \left( \frac{\tr \ \widetilde{R}_{\Gamma_n} (k_t^{\rho ,\sigma}) }{|H^1 (\sigma , \Gamma_n) | \vol (\Gamma_n^{\sigma} \backslash G^{\sigma})} - 
\int_{G^{\sigma} \backslash G} 
k_t^{\rho ,\sigma} (x^{-1} \sigma x) d\dot{x} \right) \frac{dt}{t}.
\end{multline*}
Now it follows from Proposition \ref{P:larget} that there exists some positive $\eta$ such that $|k_t^{\rho , \sigma} (x \rtimes \sigma )| \ll e^{- \eta t} H_t^0 (x)$. In particular $|k_t^{\rho , \sigma} (x \rtimes \sigma )| \ll e^{- \eta t} H_1^0 (x)$ if $t \geq 1$ and we have: 
$$\frac{|\tr \ \widetilde{R}_{\Gamma_n} (k_t^{\rho ,\sigma})|}{|H^1 (\sigma , \Gamma_n) | \vol (\Gamma_n^{\sigma} \backslash G^{\sigma})} \ll e^{-\eta t} \sum_{
\{\delta \}_\Gamma } \int_{G^{\delta} \backslash G} H_1^{0} (x^{-\sigma} \delta x) d\dot{x}.$$ 
The above sum is absolutely convergent and independent of $t$ and $n$, implying that  
$$\frac{|\tr \ \widetilde{R}_{\Gamma_n} (k_t^{\rho ,\sigma})|}{|H^1 (\sigma , \Gamma_n) | \vol (\Gamma_n^{\sigma} \backslash G^{\sigma})} \ll e^{-\eta t}$$
where the implicit constant does not depend on $n$. Using Lemma \ref{Lkt}, we conclude that both 
$$\int_1^{+\infty} \frac{\tr \ \widetilde{R}_{\Gamma_n} (k_t^{\rho ,\sigma}) }{|H^1 (\sigma , \Gamma_n) | \vol (\Gamma_n^{\sigma} \backslash G^{\sigma})}\frac{dt}{t} \mbox{ and } \int_1^{+\infty} \int_{G^{\sigma} \backslash G} 
k_t^{\rho ,\sigma} (x^{-1} \sigma x) d\dot{x}  \frac{dt}{t}$$
are absolutely convergent uniformly in $n.$ We are therefore reduced to proving that 
$$\frac{\tr \ \widetilde{R}_{\Gamma_n} (k_t^{\rho ,\sigma}) }{|H^1 (\sigma , \Gamma_n) | \vol (\Gamma_n^{\sigma} \backslash G^{\sigma})} - 
\int_{G^{\sigma} \backslash G} 
k_t^{\rho ,\sigma} (x^{-1} \sigma x) d\dot{x} \to 0$$
uniformly in $t$ when $t$ belongs to a compact subinterval of $[0, +\infty)$. But this follows from the proof of Proposition \ref{WLF} and the fact that for every $\delta \in \Gamma$ 
the sequence 
$$\left(\frac{| \{ \gamma \in \Gamma_n \backslash \Gamma \; : \; \gamma \delta \gamma^{-1} \in \widetilde{\Gamma}_n \}|}{[\Gamma^\sigma : \Gamma_n^\sigma]} \right)_{n \geq 0}$$
remains uniformly bounded by $A$ and converges to $0$ as $n$ tends to infinity. 
\end{proof}

\medskip
\noindent
{\it Remark.} Though reminiscent of a natural condition in the non-twisted case, we do not know how to check the condition on $c_{\Gamma_n} (\delta )$ stated in Theorem \ref{twistedtorsionlimitmultiplicity} for any non-arithmetic $\Gamma.$  However, we show in the next section that it holds for many sequences of congruence subgroups of arithmetic groups.

\section{Bounding the growth of $c_{\Gamma_n}(\delta)$ for congruence subgroups of arithmetic groups}

We begin with a lemma to be used heavily in the proof of Proposition \ref{boundinggrowth}, which bounds the growth of $c_{\Gamma_p}(\delta).$

\begin{lem}[Rational points of inner forms] \label{innerforms}
Let $\mathbf{P}$ be any connected, affine algebraic group over a finite field $k.$  If $\mathbf{P}'$ over $k$ is any inner form of $\mathbf{P},$ then $|\mathbf{P}(k)| = |\mathbf{P}'(k)|.$ 
\end{lem}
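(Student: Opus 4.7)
The plan is to invoke Lang's theorem, which will actually yield the stronger statement that $\mathbf{P}'$ is $k$-isomorphic to $\mathbf{P}$; the equality of cardinalities of $k$-rational points is then immediate.

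First I would fix an algebraic closure $\bar{k}$ and let $F$ denote the associated geometric Frobenius, so that $\mathbf{P}(k) = \mathbf{P}(\bar{k})^F$. By definition of an inner form, there is a $\bar{k}$-isomorphism $\phi : \mathbf{P}'_{\bar{k}} \to \mathbf{P}_{\bar{k}}$ whose descent cocycle lands in the inner automorphism group $\mathrm{Inn}(\mathbf{P})(\bar{k}) = \mathbf{P}^{\mathrm{ad}}(\bar{k})$, where $\mathbf{P}^{\mathrm{ad}} = \mathbf{P}/Z(\mathbf{P})$. Since $\Gal(\bar{k}/k)$ is topologically generated by $F$, this cocycle is encoded in a single element $c \in \mathbf{P}^{\mathrm{ad}}(\bar{k})$; transporting everything via $\phi$ identifies $\mathbf{P}'(k)$ with the fixed-point set of the twisted Frobenius $F' := \Ad(c) \circ F$ on $\mathbf{P}(\bar{k})$.

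The key step is then to apply Lang's theorem to the connected algebraic group $\mathbf{P}^{\mathrm{ad}}$, which is connected as a quotient of the connected group $\mathbf{P}$. Lang produces an element $h \in \mathbf{P}^{\mathrm{ad}}(\bar{k})$ with $h^{-1} F(h) = c$. Lifting $h$ to some $\tilde{h} \in \mathbf{P}(\bar{k})$, I would then check directly that the inner automorphism $\Ad(\tilde{h})$ of $\mathbf{P}(\bar{k})$ intertwines $F'$ with $F$, and hence restricts to a group isomorphism $\mathbf{P}'(k) \xrightarrow{\sim} \mathbf{P}(k)$. The intertwining computation collapses to the identity $h^{-1} F(h) = c$ in $\mathbf{P}^{\mathrm{ad}}(\bar{k})$, where any ambiguity introduced by the lift $\tilde{h}$ is killed on passage to $\mathbf{P}^{\mathrm{ad}}$.

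There is no genuine obstacle here beyond identifying the correct application of standard machinery; the only mild subtlety is that Lang's theorem must be applied to $\mathbf{P}^{\mathrm{ad}}$ rather than to $\mathbf{P}$ itself, because inner forms are parameterized by inner automorphisms. Notably no reductivity hypothesis on $\mathbf{P}$ is used, since Lang's theorem is valid for arbitrary connected algebraic groups over a finite field.
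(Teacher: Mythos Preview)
Your argument is correct and in fact more direct than the paper's. Both proofs ultimately rest on Lang's theorem, but you apply it immediately to the connected group $\mathbf{P}^{\mathrm{ad}}$ to obtain $H^1(k,\mathrm{Inn}(\mathbf{P})) = 1$, whence every inner form of $\mathbf{P}$ is already $k$-isomorphic to $\mathbf{P}$ --- a stronger conclusion than mere equality of point counts. The paper instead takes a detour: it first reduces to the smooth case, then decomposes $\mathbf{P}$ into its unipotent radical $\mathbf{U}$ and reductive quotient $\mathbf{R}$, counts $|\mathbf{U}(k)| = |k|^{\dim \mathbf{U}}$ directly using that unipotent groups over perfect fields are split, and only at the end invokes $H^1(k,\mathrm{Inn}(\mathbf{R})) = 1$ (i.e.\ Lang) to identify $\mathbf{R}$ with $\mathbf{R}'$ over $k$. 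Your observation is simply that this last step applies equally well to $\mathbf{P}$ itself, rendering the unipotent/reductive decomposition unnecessary. One small caveat worth making explicit: the standard proof of Lang's theorem uses smoothness (so that the Lang map has bijective differential and is hence surjective), so when $\mathbf{P}^{\mathrm{ad}}$ is non-reduced you should pass to $(\mathbf{P}^{\mathrm{ad}})_{\rm red}$, which over the perfect field $k$ is a smooth connected subgroup scheme with the same $\bar{k}$-points. The paper carries out this reduction explicitly at the outset; your final sentence alludes to it but does not quite spell it out.
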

\begin{proof}
We first reduce to the smooth case.  Denote by $\mathbf{P}_{\rm red}$ the underlying reduced scheme of $\mathbf{P}$. We have an isomorphism $(\mathbf{P}_{\rm red}  \times_k \mathbf{P}_{\rm red})_{\rm red} 
\stackrel{\sim}{\to} (\mathbf{P} \times_k \mathbf{P})_{\rm red}$, see \cite[Chap. I, Cor. 5.1.8]{EGA}. Since the field $k$ is perfect, we moreover have $(\mathbf{P}_{\rm red}  \times_k \mathbf{P}_{\rm red})_{\rm red} 
\stackrel{\sim}{\to} \mathbf{P}_{\rm red}  \times_k \mathbf{P}_{\rm red}$ so that the group law $\mathbf{P} \times \mathbf{P} \to \mathbf{P}$ induces a group law on $\mathbf{P}_{\rm red}.$ Hence, $\mathbf{P}_{\rm red}$ is a closed subgroup $k$-scheme of $\mathbf{P}.$  Since every reduced finite type scheme over a perfect field is smooth over a dense open subscheme, the standard homogeneity argument implies the group $\mathbf{P}_{\rm red}$ is smooth.  But $\mathbf{P}(k) = \mathbf{P}_{\mathrm{red}}(k), \mathbf{P}'_{\mathrm{red}}(k) = \mathbf{P}'(k),$ and $\mathbf{P}_{\mathrm{red}}$ is an inner form of $\mathbf{P}'_{\mathrm{red}}.$  We may therefore assume that $\mathbf{P}$ and $\mathbf{P}'$ are smooth.
  
Let $\mathbf{P}$ and $\mathbf{P}'$ have respective unipotent radicals $\mathbf{U}, \mathbf{U}'$ and respective reductive quotients $\mathbf{R}, \mathbf{R}'.$  Because $H^1(k,\mathbf{U}) = H^1(k,\mathbf{U}') = 0,$ there are exact sequences of finite groups
\begin{align*}
1 \rightarrow \mathbf{U}(k) &\rightarrow \mathbf{P}(k) \rightarrow \mathbf{R}(k) \rightarrow 1 \\
1 \rightarrow \mathbf{U}'(k) &\rightarrow \mathbf{P}'(k) \rightarrow \mathbf{R}'(k) \rightarrow 1.
\end{align*}
Because $\mathbf{U}$ and $\mathbf{U}'$ are forms, their dimensions are equal, say to $d.$  Furthermore, all unipotent groups over the perfect field $k$ are split unipotent.  Therefore, applying the vanishing of $H^1$ to filtrations of $\mathbf{U},\mathbf{U}'$ by subgroups whose successive quotients are $\mathbb{G}_a,$ we find that
$$|\mathbf{U}(k)| = |k|^d = |\mathbf{U}'(k)|.$$
Moreover, because $\mathbf{P}, \mathbf{P}'$ are inner forms, so are $\mathbf{R}, \mathbf{R}'.$  But $H^1(k, \mathrm{Inn}(\mathbf{R})) = 0$ since $\mathrm{Inn}(\mathbf{R})$ is connected.  Therefore, $\mathbf{R} \cong \mathbf{R}'$ over $k.$  In particular,
$$|\mathbf{R}(k)| = |\mathbf{R}'(k)|.$$
The result follows.
\end{proof}

Let $\G / O_{F,S}$ be a semisimple group, where $O_{F,S}$ denotes the ring of $S$-integers in a number field $F.$  For ease of exposition, we also assume that $\G$ is simply connected.  Let $E/F$ be a cyclic Galois extension with $\Gal(E/F) = \langle \sigma \rangle.$  Let $\mathbf{H} \subset \G$ be a connected algebraic subgroup smooth over $O_{F,S}.$  We fix integral structures so we may speak of $\G(O_F), \G(O_E)$ and $\mathbf{H}(O_F), \mathbf{H}(O_E).$ 

Fix a finite index, $\sigma$-stable subgroup $\Gamma \subset \G(O_E).$  Let $\mathfrak{p} \subset O_F$ be a prime ideal.  Let
\begin{equation} \label{congp}
\Gamma_\mathfrak{p}= \{ \gamma \in \Gamma : \gamma \in \mathbf{H}(O_E / \mathfrak{p}O_E) \text{ mod } \mathfrak{p} \}.
\end{equation}
Every $\Gamma_\mathfrak{p} \subset \G(E_\R)$ is a Galois-stable lattice. 

\medskip
\noindent
{\it Remark.} When $\mathbf{H}$ is trivial, the group $\Gamma_\mathfrak{p}$ is just the usual level $\mathfrak{p}$ congruence subgroup $\Gamma (\mathfrak{p})$. When $\mathbf{H}$ is the Borel subgroup of $\G$, the group $\Gamma_\mathfrak{p}$ is usually denoted by $\Gamma_0 (\mathfrak{p})$.

\medskip

\begin{prop} \label{boundinggrowth}
Suppose $\mathbf{H}_{\overline{F}}$ does not contain any normal subgroup of $\G_{\overline{F}}.$  Then the hypotheses of Proposition \ref{WLF} and Theorem \ref{twistedtorsionlimitmultiplicity} hold for the sequence $\Gamma_\mathfrak{p}.$  Namely,
\begin{itemize}
\item
There is a uniform upper bound
$$\frac{c_{\Gamma_\mathfrak{p}}(\delta)}{[\Gamma^{\sigma}:\Gamma_\mathfrak{p}^{\sigma}]} = \frac{|\{ \gamma \in \Gamma_\mathfrak{p} \backslash \Gamma: \gamma \delta \gamma^{-1}\in \widetilde{\Gamma}_\mathfrak{p}   \}|}{[\Gamma^{\sigma}:\Gamma_\mathfrak{p}^{\sigma}]} \leq C$$ 
for some constant $C$ depending only on $\G, \mathbf{H},$ and $\Gamma.$  

\item
For every $\delta,$
$$\lim_{\mathfrak{p} \rightarrow \infty} \frac{c_{\Gamma_\mathfrak{p}}(\delta)}{[\Gamma^{\sigma}:\Gamma_\mathfrak{p}^{\sigma}]} = 0.$$
\end{itemize}
\end{prop}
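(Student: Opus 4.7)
The plan is to translate both $c_{\Gamma_p}(\delta)$ and $[\Gamma^\sigma:\Gamma_p^\sigma]$ into counts of $\mathbb{F}_p$-points on $\mathbb{Q}$-schemes and then compare dimensions. Set $d = [E:\mathbb{Q}]$, $\mathbf{G}' := \Res_{E/\mathbb{Q}} \mathbf{G}$, and $\mathbf{H}' := \Res_{E/\mathbb{Q}} \mathbf{H}$. Since $\mathbf{G}$ is simply connected semisimple, strong approximation applied to $\mathbf{G}$ and to $\mathbf{G}'$ gives, for $p \nmid N$, surjections $\mathbf{G}(\mathbb{Z}) \twoheadrightarrow \mathbf{G}(\mathbb{F}_p)$ and $\mathbf{G}(\mathcal{O}_E) \twoheadrightarrow \mathbf{G}(\mathcal{O}_E/p) = \mathbf{G}'(\mathbb{F}_p)$. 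Noting that $(\mathcal{O}_E/p)^\sigma = \mathbb{F}_p$ when $p$ is unramified in $E$, this yields
\[
[\Gamma^\sigma : \Gamma_p^\sigma] \;=\; \frac{|\mathbf{G}(\mathbb{F}_p)|}{|\mathbf{H}(\mathbb{F}_p)|}, \qquad c_{\Gamma_p}(\delta) \;=\; \frac{|Y_\delta(\mathbb{F}_p)|}{|\mathbf{H}'(\mathbb{F}_p)|},
\]
where $Y_\delta := \{\gamma \in \mathbf{G}' : \gamma \delta_0 \sigma(\gamma)^{-1} \in \mathbf{H}'\}$ is a closed $\mathbb{Q}$-subscheme of $\mathbf{G}'$.

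Next I would work out the geometric structure of $Y_\delta$ over $\bar{\mathbb{Q}}$. The $d$ embeddings of $E$ give $\mathbf{G}'_{\bar{\mathbb{Q}}} \cong \mathbf{G}_{\bar{\mathbb{Q}}}^d$ with $\sigma$ acting by cyclic shift. Writing $\gamma = (\gamma_0, \dots, \gamma_{d-1})$ and $\delta_0 = (\delta_{0,0}, \dots, \delta_{0,d-1})$, the membership condition becomes $h_i := \gamma_i \delta_{0,i}\gamma_{i-1}^{-1} \in \mathbf{H}$ for every $i \bmod d$. Solving recursively for $\gamma_1, \dots, \gamma_{d-1}$ in terms of $(\gamma_0, h_1, \dots, h_{d-1})$ and imposing the closing cyclic identity leaves the single remaining constraint $\gamma_0 z \gamma_0^{-1} \in \mathbf{H}$, where $z := \delta_{0,0}\delta_{0,d-1}\cdots\delta_{0,1}$ is the first coordinate of $\delta^d$ in this decomposition. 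Hence there is a $\bar{\mathbb{Q}}$-isomorphism
\[
Y_\delta \;\cong\; V_z \times \mathbf{H}^{d-1}, \qquad V_z := \{\gamma \in \mathbf{G} : \gamma z \gamma^{-1} \in \mathbf{H}\},
\]
and crucially $z = e$ exactly when $\delta \in Z^1(\sigma,\Gamma)$.

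For the uniform upper bound (first bullet), the trivial estimate $\dim V_z \leq \dim \mathbf{G}$ gives $\dim Y_\delta \leq \dim\mathbf{G} + (d-1)\dim\mathbf{H}$ for every $\delta$. Treating the $Y_\delta$ as fibers of the universal family $\widetilde{Y} \subset \mathbf{G}' \times \mathbf{G}'$, a uniform fiberwise Lang--Weil bound yields $|Y_\delta(\mathbb{F}_p)| \leq C \cdot p^{\dim\mathbf{G} + (d-1)\dim\mathbf{H}}$ with $C$ independent of $\delta$ and $p$. Combined with Lemma \ref{innerforms} (which ensures that $|\mathbf{G}(\mathbb{F}_p)|, |\mathbf{H}(\mathbb{F}_p)|, |\mathbf{H}'(\mathbb{F}_p)|$ have the expected leading orders $p^{\dim\mathbf{G}}, p^{\dim\mathbf{H}}, p^{d\dim\mathbf{H}}$ uniformly in the splitting behaviour of $p$ in $E$, since different splitting types produce inner forms of a common $\mathbb{F}_p$-group), this gives a uniform constant bound on the ratio.

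For the second bullet, fix $\delta \notin Z^1(\sigma,\Gamma)$ so that $z \neq e$. The hypothesis on $\mathbf{H}$ forces the normal core $\bigcap_{g \in \mathbf{G}_{\bar{\mathbb{Q}}}} g\mathbf{H}_{\bar{\mathbb{Q}}}g^{-1}$ to be trivial, so the $\mathbf{G}$-conjugacy class $C_z$ is not contained in $\mathbf{H}$. Irreducibility of $C_z$ then gives $\dim(C_z \cap \mathbf{H}) < \dim C_z$, whence
\[
\dim V_z \;=\; \dim(C_z \cap \mathbf{H}) + \dim Z_{\mathbf{G}}(z) \;<\; \dim C_z + \dim Z_{\mathbf{G}}(z) \;=\; \dim \mathbf{G},
\]
so $\dim Y_\delta \leq \dim\mathbf{G} - 1 + (d-1)\dim\mathbf{H}$ and the same estimate produces $c_{\Gamma_p}(\delta)/[\Gamma^\sigma:\Gamma_p^\sigma] = O(p^{-1}) \to 0$. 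The main obstacle I expect is making the Lang--Weil-type bounds genuinely uniform in both $\delta$ (which varies over the infinite discrete set $\mathbf{G}'(\mathbb{Z})$) and in $p$; I would handle this by working with the flat universal family $\widetilde{Y}$ and invoking Lemma \ref{innerforms} to absorb the variation in the Frobenius action on $\mathbf{G}'$ and $\mathbf{H}'$.
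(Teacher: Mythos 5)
You take a genuinely different route from the paper, organized around the $\mathbb{Q}$-scheme $Y_\delta$ and Lang--Weil estimates, whereas the paper works combinatorially with finite-field point counts: it writes $c_{\Gamma_p}(\delta) = \tr\left(\delta\sigma\,|\,\mathrm{Ind}_{\mathbf{H}(\mathcal{O}_E/p)}^{\G(\mathcal{O}_E/p)}1\right)$, expands the trace over twisted $\G(k')$-conjugacy classes, and uses the inner-form Lemma \ref{innerforms} together with the bounded-component Lemma \ref{centralizercomponents} to replace twisted centralizers by ordinary centralizers of $\mathrm{Norm}(\delta)$, landing on $c_{\Gamma_p}(\delta) \leq M\cdot\left|\mathrm{Fix}\left(\mathrm{Norm}(\delta)\,|\,\mathbf{H}(\mathbb{F}_p)\backslash\G(\mathbb{F}_p)\right)\right|$ with $M$ absolute. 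The two arguments share the essential ideas --- Shintani descent to the norm $z = \mathrm{Norm}(\delta)$, and the positive-codimension observation for $\mathrm{Fix}(z\,|\,\mathbf{H}\backslash\G)$ when $z \neq 1$, forced by the normal-core hypothesis --- and your geometric identification $Y_\delta \cong V_z \times \mathbf{H}^{d-1}$ over $\bar{\mathbb{Q}}$ is precisely the variety-level shadow of that reduction.

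The gap is uniformity, and it is concentrated in the first bullet. In the paper the upper bound is essentially free: $\left|\mathrm{Fix}\left(\mathrm{Norm}(\delta)\,|\,\mathbf{H}(\mathbb{F}_p)\backslash\G(\mathbb{F}_p)\right)\right| \leq \left|\mathbf{H}(\mathbb{F}_p)\backslash\G(\mathbb{F}_p)\right| = [\Gamma^{\sigma}:\Gamma_p^{\sigma}]$, so the ratio is $\leq 1$, and all the work goes into the prefactor $M$ via Lemmas \ref{innerforms} and \ref{centralizercomponents}. Your version instead needs a Lang--Weil constant for $Y_\delta$ that is uniform simultaneously in $p$ \emph{and} in $\delta$, which ranges over an infinite discrete set. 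You flag this, but the fix you sketch --- invoke Lemma \ref{innerforms} to ``absorb the variation in the Frobenius action'' --- does not apply as stated: $Y_\delta$ is not a group scheme, so equality of point counts for inner forms is unavailable for it; and when $p$ is inert in $E$ the Frobenius on $\mathbf{G}'_{\mathbb{F}_p}(\bar{\mathbb{F}_p}) \cong \mathbf{G}(\bar{\mathbb{F}_p})^d$ is Frobenius composed with cyclic shift, under which your coordinate change $h_i = \gamma_i\delta_{0,i}\gamma_{i-1}^{-1}$ is not equivariant, so the $\bar{\mathbb{Q}}$-splitting $Y_\delta \cong V_z\times\mathbf{H}^{d-1}$ does not descend to a product over $\mathbb{F}_p$. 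What you actually need is a bound, uniform over the universal family $\widetilde{Y} \to \mathbf{G}'$, on the number and degrees of geometric components of the fibers; that is exactly the role played by Lemma \ref{centralizercomponents} (a B\'ezout bound) in the paper. Your route can be completed along these lines, but this step is a genuine additional argument rather than an appeal to Lemma \ref{innerforms}.
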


\begin{proof} We abuse notation and write $\delta = \delta \rtimes \sigma$, so that now $\delta$ belongs to $\Gamma$.  Let $$\Gamma_{\mathfrak{p}}^\ast = \{ \gamma \in \G(O_E) : \gamma \in \mathbf{H}(O_E / \mathfrak{p}O_E) \}.$$ 
For $\mathfrak{p}$ sufficiently large, the inclusion
$$\Gamma_{\mathfrak{p}} \backslash \Gamma \rightarrow \Gamma_\mathfrak{p}^\ast \backslash \G(O_E)$$
is a $\sigma$-equivariant isomorphism.  For such $\mathfrak{p},$ we may therefore identify
\begin{align*}
c_{\Gamma_\mathfrak{p}}(\delta) &= | \mathrm{Fix} (\delta \sigma | \Gamma_\mathfrak{p} \backslash \Gamma ) | \\
&= | \mathrm{Fix}(\delta \sigma | \Gamma_\mathfrak{p}^\ast \backslash \G(O_E)) | \\
&=| \mathrm{Fix}(\delta \sigma | \mathbf{H}(O_E/ \mathfrak{p}O_E) \backslash \G(O_E/ \mathfrak{p}O_E)) | \\ 
&= \tr \left( \delta \sigma | \mathrm{Ind}_{\mathbf{H}(O_E/\mathfrak{p}O_E)}^{\G(O_E/ \mathfrak{p}O_E)} 1 \right).
\end{align*}
Let $k = O_F / \mathfrak{p}.$  In the case where $O_E/\mathfrak{p} = k \times k,$ 
$$c_{\Gamma_\mathfrak{p}}(\delta) = \tr \left( \mathrm{Norm}(\delta) | \mathrm{Ind}_{\mathbf{H}(k)}^{\G(k)} 1 \right);$$
this follows, for example, from the identity
$$\tr \left( A_1 \otimes A_2 \circ  \text{cyclic permutation} | V^{\otimes 2} \right) = \tr \left( A_1 \circ A_2 | V \right)$$
valid for arbitrarily endomorphisms $A_i$ of a finite dimensional vector space $V.$  So in this case,
$$c_{\Gamma_\mathfrak{p}}(\delta) = | \mathrm{Fix}\left( \mathrm{Norm}(\delta) | \mathbf{H}(k) \backslash \G(k) \right)| \leq [\G(k): \mathbf{H}(k)] = [\Gamma^{\sigma}:\Gamma_\mathfrak{p}^{\sigma}].$$

We turn to the more interesting case where $O_E/\mathfrak{p}O_E = k'$ is a finite field extension of $k.$  The quantity $c_{\Gamma_\mathfrak{p}}(\delta)$ can be expressed explicitly as a sum over twisted conjugacy classes:
\begin{equation}\label{traceofinduced}
\tr \left( \delta \sigma | \mathrm{Ind}_{\mathbf{H}(O_E/\mathfrak{p}O_E)}^{\G(O_E/\mathfrak{p}O_E)} 1 \right) = \frac{|\mathbf{Z}_{\delta \rtimes \sigma}(k)|}{|\mathbf{H}(k')|} \times \left| \mathbf{H}(k') \cap \{ \G(k')\text{-twisted conjugacy class of $\delta$}\} \right|.
\end{equation}   
In \eqref{traceofinduced}, $\mathbf{Z}_{\G,\delta \rtimes \sigma}$ denotes the twisted centralizer in $R_{k'/k}\G$ of $\delta.$  The set 
$$\mathbf{H}(k') \cap  \{ \G(k')\text{-twisted conjugacy class of $\delta$} \}$$ 
is invariant under twisted conjugation and so decomposes as a finite disjoint union of $\mathbf{H}(k')$-twisted conjugacy classes.  Let $\{ y \}_{y \in I}$ be a full set of representatives for these conjugacy classes.

All of the $\mathbf{H}(k)$-conjugacy classes $[\mathrm{Norm}(y)], y \in I,$ are $\mathbf{G}(k)$-conjugate to $\mathrm{Norm}(\delta).$  Furthermore, the fiber over the conjugacy class $[\mathrm{Norm}(y)]$ under the norm map is in bijection with $H^1(\sigma, \mathbf{Z}_{\mathbf{H}, y \rtimes \sigma}(k))$ \cite[$\S 4$ Lemma 4.2]{Lan}.  By the vanishing of $H^1$ of connected groups over finite fields,
$$H^1(\sigma,\mathbf{Z}_{y \rtimes \sigma}(k)) \cong H^1(\sigma,\pi_0(\mathbf{Z}_{\mathbf{H},y \rtimes \sigma})(k)).$$  

Observe that
\begin{align} \label{firstinequality}
\frac{|\mathbf{Z}_{\mathbf{G},\delta \rtimes \sigma}(k)|}{|\mathbf{H}(k')|} \times & \left| \mathbf{H}(k') \cap \{ \G(k')\text{-twisted conjugacy class of $\delta$}\} \right| \nonumber \\
&= |\mathbf{Z}_{\mathbf{G},\delta \rtimes \sigma}(k)| \times \sum_{y \in I} \frac{| \{ \mathbf{H}(k')\text{-twisted conjugacy class of } y\} | }{|\mathbf{H}(k')|} \nonumber \\
&= |\mathbf{Z}_{\mathbf{G},\delta \rtimes \sigma}(k)| \times \sum_{y \in I} \frac{1}{| \mathbf{Z}_{\mathbf{H}, y \rtimes \sigma}(k) |}.
\end{align}

The groups $\mathbf{Z}_{\mathbf{H}, y \rtimes \sigma}$ and $\mathbf{Z}_{\mathbf{H},\mathrm{Norm}(y)}$ are inner forms \cite[$\S 1$ Lemma 1.1]{AC}.  By Lemma \ref{innerforms}, 
\begin{equation} \label{secondinequality}
\frac{1}{|\mathbf{Z}_{\mathbf{H},y \rtimes \sigma}(k)|} \leq |\pi_0(\mathbf{Z}_{\mathbf{H},\mathrm{Norm}(y)})(k)| \times \frac{1}{|\mathbf{Z}_{\mathbf{H},\mathrm{Norm}(y)}(k)|}.
\end{equation}
Let $M_{\mathbf{H}}$ be the maximum of $|\pi_0(\mathbf{Z}_{\mathbf{H},\mathrm{Norm}(y)})(k)|$ and $C_{\mathbf{H},\sigma}$ the maximum of $|H^1(\sigma,\pi_0(\mathbf{Z}_{\mathbf{H}, y \rtimes \sigma})(k))|$ for all $y \in I.$  Combining \eqref{firstinequality} and \eqref{secondinequality} gives

\begin{multline} \label{secondinequality2}
\frac{|\mathbf{Z}_{\mathbf{G},\delta \rtimes \sigma}(k)|}{|\mathbf{H}(k')|} \times \left| \mathbf{H}(k') \cap \{ \G(k')\text{-twisted conjugacy class of $\delta$}\} \right| \nonumber \\
\begin{split}
&\leq M_{\mathbf{H}} \times |\mathbf{Z}_{\mathbf{G},\delta \rtimes \sigma}(k)| \times \sum_{y \in I} \frac{1}{|\mathbf{Z}_{\mathrm{Norm}(y)}(k)|} \nonumber \\
&= M_{\mathbf{H}} \times |\mathbf{Z}_{\mathbf{G},\delta \rtimes \sigma}(k)| \times \sum_{y \in I} \frac{|\{ \mathbf{H}(k) \text{-conjugacy class of } \mathrm{Norm}(y)  \}|}{|\mathbf{H}(k)|} \nonumber \\
&\leq  M_{\mathbf{H}} \times \frac{|\mathbf{Z}_{\mathbf{G},\delta \rtimes \sigma}(k)|}{|\mathbf{Z}_{\mathbf{G},\mathrm{Norm}(\delta)}(k)|} \times C_{\mathbf{H},\sigma} \times \left( \frac{|\mathbf{Z}_{\mathbf{G},\mathrm{Norm}(\delta)}(k)|}{|\mathbf{H}(k)|} \times  \left| \mathbf{H}(k) \cap \{ \G(k)\text{-conjugacy class of $\delta$}\} \right| \right) \nonumber \\
&=  M_{\mathbf{H}} \times \frac{|\mathbf{Z}_{\mathbf{G},\delta \rtimes \sigma}(k)|}{|\mathbf{Z}_{\mathbf{G},\mathrm{Norm}(\delta)}(k)|} \times C_{\mathbf{H},\sigma} \times \tr \left( \mathrm{Norm}(\delta) | \mathrm{Ind}_{\mathbf{H}(k)}^{\mathbf{G}(k)} 1 \right).
\end{split}
\end{multline}

Because $\mathbf{Z}_{\G, \delta \rtimes \sigma}$ and $\mathbf{Z}_{\G, \mathrm{Norm}(\delta)}$ are inner forms \cite[$\S 1$ Lemma 1.1]{AC}, Lemma \ref{innerforms} gives
\begin{equation} \label{thirdinequality}
 \frac{|\mathbf{Z}_{\mathbf{G},\delta \rtimes \sigma}(k)|}{|\mathbf{Z}_{\mathbf{G},\mathrm{Norm}(\delta)}(k)|} \leq |\pi_0(\mathbf{Z}_{\mathbf{G},\delta \rtimes \sigma})(k)| \leq M_{\G,\sigma},
\end{equation}
where $M_{\mathbf{G},\sigma}$ is the maximum of $\pi_0(Z_{\G, \delta \rtimes \sigma})$ over all $\delta \in \G(k').$  Combining \eqref{traceofinduced}, \eqref{secondinequality}, and \eqref{thirdinequality} gives  
\begin{align*} 
c_{\Gamma_\mathfrak{p}}(\delta) = \tr \left( \delta \sigma | \mathrm{Ind}_{\mathbf{H}(O_E/\mathfrak{p}O_E)}^{\G(O_E/\mathfrak{p}O_E)} 1 \right) &\leq  M_{\mathbf{H}} \times M_{\G,\sigma} \times C_{\mathbf{H},\sigma} \times \tr \left( \mathrm{Norm}(\delta) | \mathrm{Ind}_{\mathbf{H}(k)}^{\mathbf{G}(k)} 1 \right) \\ 
&= M_{\mathbf{H}} \times M_{\G,\sigma} \times C_{\mathbf{H},\sigma} \times |\mathrm{Fix}\left(\mathrm{Norm}(\delta) | \mathbf{H}(k) \backslash \G(k) \right) |.
\end{align*} 

In Lemma \ref{centralizercomponents}, we show that the number of geometric components of the centralizer of every element of $\mathbf{H}(\overline{k})$ or $\mathbf{G}(\overline{k})$ is uniformly bounded over all $\mathfrak{p}$; this proves the same for twisted centralizers too, since the twisted centralizer of $\delta$ is an (inner) form of the centralizer of $\mathrm{Norm}(\delta).$  Thus, $M_{\mathbf{H}}, M_{\G,\sigma},$ and $C_{\mathbf{H},\sigma}$ are bounded by some constant $M = M(\G, \mathbf{H})$ depending only on $\G$ and $\mathbf{H}.$  We are reduced to bounding the right side of 
$$\frac{c_{\Gamma_\mathfrak{p}}}{[\Gamma^{\sigma}:\Gamma_\mathfrak{p}^{\sigma}]} \leq M \times \frac{ |\mathrm{Fix}\left(\mathrm{Norm}(\delta) | \mathbf{H}(k) \backslash \G(k) \right) |}{|\mathbf{H}(k) \backslash \G(k) |}.$$
\begin{itemize}
\item
Evidently,
$$ M \times \frac{ |\mathrm{Fix}\left(\mathrm{Norm}(\delta) | \mathbf{H}(k) \backslash \G(k) \right) |}{|\mathbf{H}(k) \backslash \G(k) |} \leq M,$$
uniformly for all $\delta$ and all $p.$

\item
To ease notation, let $\gamma = \mathrm{Norm}(\delta).$  By Lang's theorem,
$$ \frac{ |\mathrm{Fix}\left(\gamma | \mathbf{H}(k) \backslash \G(k) \right) |}{|\mathbf{H}(k) \backslash \G(k) |} =  \frac{ |\mathrm{Fix}\left(\gamma | (\mathbf{H} \backslash \G) \right)(k) |}{|(\mathbf{H} \backslash \G)(k) |}.$$
If $\gamma$ acts trivially on $\mathbf{H} \backslash \G,$ then $\gamma \in \bigcap_{g \in \G(\overline{F})} g \mathbf{H}_{\overline{F}}g^{-1},$ a normal subgroup of $\G_{\overline{F}}$ contained in $\mathbf{H}_{\overline{F}},$ implying that $\gamma = 1$ by hypothesis.  Therefore, $\mathrm{Fix}(\gamma | \mathbf{H} \backslash \G)$ is a proper subvariety of $\mathbf{H} \backslash \G$ for every $\gamma \neq 1.$  Since $\mathbf{H} \backslash \G$ is irreducible, $\mathrm{Fix}(\gamma | \mathbf{H} \backslash \G)$ must have strictly positive codimension in $\mathbf{H} \backslash \G.$  It follows that
\begin{equation*}
\frac{c_{\Gamma_\mathfrak{p}}(\delta)}{[\Gamma^{\sigma}:\Gamma_\mathfrak{p}^{\sigma}]} \leq M \times \frac{|\mathrm{Fix}(\mathrm{Norm}(\delta)| \mathbf{H} \backslash \G)(k)|}{(\mathbf{H} \backslash \G)(k)} \xrightarrow{p \rightarrow \infty} 0.  
\end{equation*}

\end{itemize}  
\end{proof} 

\begin{lem} \label{centralizercomponents}
Let $\G$ be an affine algebraic group over $O_{F,S}.$   The number of components of $\mathbf{Z}_{\G,x},$ where $x$ ranges over all elements of $\G(O_F / \mathfrak{p})$ for all primes ideals $\mathfrak{p} \notin S$ is uniformly bounded.
\end{lem}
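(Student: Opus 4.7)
The plan is to realize all centralizers $\mathbf{Z}_{\G, x}$ uniformly as fibers of a single finite-type morphism, and then appeal to constructibility of fiber invariants to obtain a uniform bound on the number of components.

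First I would form the incidence variety: let $\mathcal{Z} \subset \G \times_F \G$ be the closed subscheme defined by $gx = xg$, i.e.\ the preimage of the diagonal under the morphism $(g, x) \mapsto (gxg^{-1}, x)$. The second projection $\pi \colon \mathcal{Z} \to \G$ is a morphism of finite type (both source and target being of finite type over $F$), and its scheme-theoretic fiber over any $F$-point $x \in \G(F)$ is precisely the centralizer $\mathbf{Z}_{\G, x}$.

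Next I would invoke a standard constructibility result (see, e.g., EGA IV, 9.7.8): for any morphism of finite type, the function $y \mapsto \#\pi_0(\text{geometric fiber at } y)$ is constructible on the target. Since $\G$ is noetherian, a constructible $\mathbb{Z}$-valued function takes only finitely many values; hence there is a universal constant $N = N(\G)$ bounding the number of geometric connected components of every fiber of $\pi$. For each $F$-point $x$, the number of connected components of $\mathbf{Z}_{\G, x}$ over $F$ is at most the number of geometric components of $(\mathbf{Z}_{\G, x})_{\bar{F}}$, hence at most $N$, as desired.

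The main obstacle is the constructibility invoked above, which is standard but requires care. If one prefers a direct argument bypassing EGA, one can proceed by noetherian induction on the base: generic flatness applied to $\pi$ produces a dense open $U \subset \G$ over which $\pi$ is flat, so fibers over geometric points of $U$ share a common Hilbert polynomial and in particular a common number of connected components; the complement $\G \setminus U$ is a closed subscheme of strictly smaller dimension, on which the same argument is iterated, terminating by noetherianness. One small bookkeeping point is to note that for a group scheme over a field the number of connected components of the underlying space coincides with the number of irreducible components of its reduction, so the distinction between "connected" and "irreducible" components is harmless in our setting.
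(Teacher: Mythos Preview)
Your main argument is correct and takes a genuinely different route from the paper's. The paper's proof is concrete: it fixes a closed embedding $\G \hookrightarrow \SL_n \subset \mathrm{End}_n$ cut out by polynomials $f_1,\ldots,f_m$ of degrees $d_1,\ldots,d_m$, observes that each centralizer $\mathbf{Z}_{\G,x}$ is the intersection of the \emph{linear} subspace $\mathbf{Z}_{\mathrm{End}_n,x}$ with $V(f_1,\ldots,f_m)$, passes to projective closures, and applies B\'ezout's theorem to obtain the explicit bound $|\pi_0(\mathbf{Z}_{\G,x})| \le d_1 \cdots d_m$. Your argument instead treats all centralizers at once as fibers of the universal family $\mathcal{Z} \to \G$ and appeals to constructibility of the number of geometric connected components of fibers.

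What each approach buys: the paper's proof is elementary (no EGA) and, more to the point, yields an explicit bound depending only on the chosen presentation of $\G$. This actually matters for the application in Proposition~\ref{boundinggrowth}, where one needs the bound to be uniform not just over $x$ but over all primes $p$ (the ground field varies as $\overline{\mathbb{F}_p}$); the B\'ezout bound visibly delivers this since the $d_i$ come from a fixed model over $\mathbb{Z}[1/N]$. Your argument, as written over a single field $F$, proves the lemma exactly as stated; to recover the uniformity in $p$ needed downstream you would run the same constructibility argument with the family taken over the noetherian base $\G_{\mathbb{Z}[1/N]}$, which works just as well. One small caveat: your parenthetical ``direct argument bypassing EGA'' via generic flatness and Hilbert polynomials is loose as written, since the fibers here are affine and carry no Hilbert polynomial; making that sketch precise would require compactifying, at which point one is essentially back to the paper's B\'ezout argument.
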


\begin{proof}
$\G$ is a closed subvariety of $\SL_n \subset \mathrm{End}_n$ for some fixed $n.$  Suppose $\G$ is the simultaneous vanishing locus of polynomials $f_1,\ldots,f_m$ on the vector space $\mathrm{End}_n.$  Let $f_i$ have degree $d_i,$ the maximum degree of all the monomials in its support.  Fix a prime ideal $\mathfrak{p} \notin S$ and $x \in \G(O_F / \mathfrak{p}).$  Note that
$$\mathbf{Z}_{\G,x} = \mathbf{Z}_{\mathrm{End}_n,x} \cap V(f_1,\ldots,f_m).$$
Clearly, $\mathbf{Z}_{\G,x}$ is Zariski open and dense in its projective completion
$$\widetilde{\mathbf{Z}_{\G,x}} = \mathbb{P}(\mathbf{Z}_{\mathrm{End}_n,x}) \cap V(\widetilde{f}_1,\ldots,\widetilde{f}_m)$$ 
obtained by adding a hyperplane at infinity to $\mathrm{End}_n.$  Therefore, the number of connected components of $\widetilde{\mathbf{Z}_{\G,x}}$ equals the number of connected components of $\mathbf{Z}_{\G,x}.$  Thus,
\begin{align*}
|\pi_0(\mathbf{Z}_{\G,x})| &= | \pi_0(\widetilde{\mathbf{Z}_{\G,x}}) | \\
&\leq \text{degree of } \mathbb{P}(\mathbf{Z}_{\mathrm{End}_n,x}) \cap V(\widetilde{f}_1) \cap \cdots \cap V(\widetilde{f}_m) \\
&\leq 1 \cdot d_1 \cdots d_m, 
\end{align*}
where the final inequality follows by B\'{e}zout's theorem after noting that the degree of every $f_i$ can only decrease mod $\mathfrak{p}.$  This upper bound is independent of $\mathfrak{p}$ and $x.$
\end{proof}

\medskip
\noindent
{\it Remark.} The proof of Proposition \ref{boundinggrowth} is an adaptation of Shintani's arguments \cite{Shintani} proving the existence of a ``base change transfer"
$$\text{representations of } \GL_n(k) \leadsto \text{Galois-fixed representations of } \GL_n(k').$$ 
On the one hand, he sidesteps all component and endoscopy issues by working with $\G = \GL_n, \mathbf{H} =$ parabolic subgroup.  On the other hand, he proves an exact trace identity between matching principal series representations, an analogue of the fundamental lemma for $\GL_n(k).$  
 
\medskip

In the next sections we compute the $L^2$-Lefschetz numbers and twisted $L^2$-torsion and in particular prove Lemma \ref{Lkt}. 
We distinguish two cases: we first deal with the case where $\mathbb{E} = \mathbb{R}^p$ (the product case) and then deal with case 
where $\mathbb{E} = \mathbb{C}$. The general case easily reduces to these two cases.

\section{Computations on a product} \label{product}
Here we suppose that $\mathbb{E} = \mathbb{R}^p$. Then $G$ is the $p$-fold product of $\G (\mathbb{R})$ and $\sigma$ cyclically permutes the factors of $G$.
We will abusively denote by $G^{\sigma}$ the group $\G (\mathbb{R})$. Let $(\rho_0 , F_0)$ be an irreducible complex linear representation of 
$\G (\mathbb{C})$. We denote by $(\widetilde{\rho} , F)$ the corresponding complex finite dimensional $\sigma$-stable irreducible representation 
of $\widetilde{G}$. Recall that $F = F_0^{\otimes p}$, that $G$ acts by the tensor product representation $\rho_0^{\otimes p}$ and that 
$\sigma$ acts by the cyclic permutation $A: x_1 \otimes \ldots \otimes x_p \mapsto x_p \otimes x_1 \otimes \ldots \otimes x_{p-1}$. 
We finally let $X$ and $X^{\sigma}$ be the symmetric spaces corresponding to $G$ and $G^{\sigma}$ respectively, so $X = (X^{\sigma})^p$.

\subsection{Heat kernels of a product}
The heat kernels $H_t^{\rho, j}$ decompose as
\begin{equation} \label{productheatkernel}
H_t^{\rho , j}(g_1,\cdots,g_p) = \sum_{a_1 + \ldots + a_p = j} H_t^{\rho_0 , a_1}(g_1) \otimes \cdots \otimes H_t^{\rho_0 , a_p}(g_p).
\end{equation}
Now the twisted orbital integral of $H_t^{\rho , j}$ associated to the class of the identity element is given by
$$\left(\int_{G^{\sigma} \backslash G} H_t^{\rho , j} (g^{- \sigma} g) dg \right) \circ A_{\sigma}.$$
But because $H_t^{\rho , j} (g^{- \sigma} g)$ preserves all of the summands in the decomposition of \eqref{productheatkernel} and $\sigma$ maps the $(a_1, \ldots ,a_p)$-summand to the $(a_p,a_1,\ldots ,a_{p-1})$-summand, only those summands for which $j = pa$ and $a_1 = \ldots = a_p = a$ can contribute to the trace of the above twisted orbital integral.  Furthermore, by a computation identical to that done for scalar-valued functions in \cite[\S 8]{Lan}, we see that
\begin{equation*}
\left( \int_{G^{\sigma} \backslash G}  H_t^{\rho_0 ,a}(g_p^{-1}g_1) \otimes \ldots \otimes H_t^{\rho_0 , a}(g_{p-1}^{-1}g_p) dg \right) \circ A_\sigma = (-1)^{a^2(p-1)} H_t^{\rho_0 ,a} * \ldots * H_t^{\rho_0 ,a} (e).  
\end{equation*}
This implies that 
\begin{eqnarray*} 
\int_{G^{\sigma} \backslash G} h_t^{\rho , pa}(x^{-1} \sigma x) d\dot{x} &=& \tr\left[ \left( \int_{G^{\sigma} \backslash G}  H_t^{\rho_0 , a}(g_p^{-1}g_1) \otimes \ldots \otimes H_t^{\rho_0 , a}(g_{p-1}^{-1}g_p) d\dot{g} \right) \circ A_\sigma \right] \\
&=& (-1)^{a^2(p-1)} \tr \left(  H_t^{\rho_0 ,a} * \ldots * H_t^{\rho_0 ,a} (e)   \right) \\
&=&  (-1)^{a^2(p-1)} \tr \left(  H_{pt}^{\rho_0 ,a} (e)   \right) \\
&=& (-1)^{a^2(p-1)} h_{pt}^{\rho_0 ,a} (e). 
\end{eqnarray*}
Here $H_{pt}^{\rho_0 ,a}$ is an {\it untwisted} heat kernel on $X^{\sigma}$. Lemma \ref{Lkt} therefore follows from standard estimates (see e.g. \cite{BV}). Moreover,
computations of the $L^2$-Lefschetz number and of the twisted $L^2$-torsion immediately follow from the above explicit computation. 

\begin{thm}[$L^2$-Lefschetz number of a product] \label{T:62}
We have:
$$\mathrm{Lef}^{(2)} (\sigma , X , F) = \left\{ \begin{array}{ll}
(-1)^{\frac12 \dim X^{\sigma}} (\dim F_0 ) \frac{\chi (X_u^{\sigma})}{\vol (X_u^{\sigma})} & \mbox{ if } \delta (G^{\sigma}) = 0, \\
0 & \mbox{ if not}.
\end{array} \right.$$
Here $X_u^{\sigma}$ is the compact dual of $X^{\sigma}$ whose metric is normalized such that multiplication by $i$ becomes an isometry 
$T_{eK^{\sigma}} (X^{\sigma}) \cong \p \to i \p \cong T_{eK^{\sigma}} (X_u^{\sigma})$.
\end{thm}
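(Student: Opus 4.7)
The plan is to reduce the twisted $L^2$-Lefschetz number to an ordinary $L^2$-Euler characteristic on $X^{\sigma}$, and then invoke the classical Gauss--Bonnet formula.

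\textbf{Step 1 (passing from $L_{\rho}$ to heat kernels).}  For any essential admissible $(\widetilde{\pi}, V)$, Lemma \ref{L1} together with the elementary Euler--Poincar\'e identity (the $\sigma$-trace on $C^\bullet(\mathfrak{g}, K, V\otimes F)$ equals the $\sigma$-trace on its cohomology) gives, for every $t>0$,
$$\tr\, \widetilde{\pi}\Bigl(\sum_i (-1)^i h_t^{\rho, i, \sigma}\Bigr) \;=\; \mathrm{Lef}(\sigma, F, V) \;=\; \tr\, \widetilde{\pi}(L_\rho),$$
while both sides vanish for non-essential $\widetilde{\pi}$.  Applying the twisted trace formula to both test functions against $\Gamma = \Gamma_n$ in a normal chain and normalizing as in Corollary \ref{C:47}, both yield the same limit.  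Using the Schwartz-decay estimate \eqref{kt} and the counting bound of Lemma \ref{Lcount} to justify convergence for $\sum_i(-1)^i h_t^{\rho,i,\sigma}$, one concludes
$$\mathrm{Lef}^{(2)}(\sigma, X, F) \;=\; \sum_i (-1)^i \int_{G^{\sigma}\backslash G} h_t^{\rho, i, \sigma}(x^{-1}\sigma x)\, d\dot{x}, \qquad t>0.$$

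\textbf{Step 2 (product reduction).}  The orbital-integral computation already exhibited in \S 6.1 applies verbatim: cyclicity of $A_\sigma$ kills every summand of $\wedge^i(\mathfrak{g}/\mathfrak{k})^*\otimes F$ labelled by a non-constant tuple $(a_1,\dots,a_p)$, so only $i = pa$ contributes and
$$\int_{G^{\sigma}\backslash G} h_t^{\rho, pa, \sigma}(x^{-1}\sigma x)\, d\dot{x} \;=\; (-1)^{a^2(p-1)}\, h_{pt}^{\rho_0, a}(e).$$
Since $pa + a^2(p-1) \equiv a \pmod 2$ for every parity of $p$ (using that $a(a+1)$ is even), combining with the sign $(-1)^{pa}$ from the alternating sum yields
$$\mathrm{Lef}^{(2)}(\sigma, X, F) \;=\; \sum_{a} (-1)^a h_{pt}^{\rho_0, a}(e).$$

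\textbf{Step 3 (untwisted Gauss--Bonnet).}  The right-hand side is the pointwise supertrace at the identity of the (untwisted) heat kernel on $X^{\sigma}$ with coefficients in $(\rho_0, F_0)$.  By the classical McKean--Singer principle this quantity is independent of $t$, and equals the $L^2$-Euler characteristic density at $e$ of $X^\sigma$ twisted by $F_0$.  The explicit computation of this density for symmetric spaces (recalled in \cite{BV}) vanishes precisely when $\delta(G^{\sigma}) \neq 0$, and otherwise equals $(-1)^{\frac{1}{2}\dim X^{\sigma}} (\dim F_0) \chi(X_u^{\sigma}) / \vol(X_u^{\sigma})$ for the metric normalization specified.

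\textbf{Main obstacle.}  The only subtle point is Step 1: the compactly supported function $L_{\rho}$ and the virtual Schwartz function $\sum_i (-1)^i h_t^{\rho,i,\sigma}$ are not equal on $\widetilde{G}$, yet their twisted $\sigma$-orbital integrals at the identity must coincide.  Identity of all twisted traces on essential unitary representations is not quite formally enough; I would extract the orbital-integral equality indirectly, by running the geometric-side limit of Corollary \ref{C:47} for the Harish-Chandra Schwartz function $\sum_i(-1)^i h_t^{\rho,i,\sigma}$ --- which forces verifying that the non-$\sigma$-conjugate geometric terms are negligible in the normalized limit.  This is exactly the content of the estimate \eqref{kt} combined with Lemma \ref{Lcount}, and is analogous to the Schwartz-level argument that appears in the proof of Theorem \ref{twistedtorsionlimitmultiplicity}.
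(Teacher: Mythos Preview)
Your argument is correct and follows the same three-step structure as the paper's proof: relate $\mathrm{Lef}^{(2)}$ to the orbital integral of the alternating heat-kernel sum, apply the product reduction of \S 6.1, and invoke the untwisted computation (the paper cites Olbricht, you cite \cite{BV}). The paper is terser in Step~1 --- it simply writes ``first note'' and passes to $t\to\infty$ --- whereas you recover the identity for all $t$ via the indirect limit-multiplicity comparison; one small omission in your Step~1 is that the equality $\tr\,\widetilde{\pi}\bigl(\sum_i(-1)^i h_t^{\rho,i,\sigma}\bigr)=\mathrm{Lef}(\sigma,F,V)$ requires not only the Hopf/Euler--Poincar\'e identity but also Kuga's lemma (so that $\mathrm{Lef}\neq 0$ forces $\Lambda_V=\Lambda_F$ and kills the exponential factor from Lemma~\ref{L1}).
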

\begin{proof}
First note\footnote{Beware that $\rho$ is not assumed to be strongly acyclic here !} that 
\begin{eqnarray*}
\mathrm{Lef}^{(2)} (\sigma , X , F) & = & \lim_{t \to +\infty} \int_{G^{\sigma} \backslash G} k_{t}^{\rho}(x^{-1} \sigma x) d\dot{x} \\
& = & \lim_{t \to +\infty} \sum_a (-1)^{pa} \int_{G^{\sigma} \backslash G} h_{t}^{\rho, pa}(x^{-1} \sigma x) d\dot{x} \\
& = & \lim_{t \to +\infty} \sum_a (-1)^{a} h_{pt}^{\rho_0 ,a} (e).
\end{eqnarray*}
The computation then reduces to the untwisted case for which we refer to \cite{Olbricht}.
\end{proof}

The computation of the twisted $L^2$-torsion similarly reduces to the untwisted case:

\begin{thm}[Twisted $L^2$-torsion of a product] \label{twistedtorsionproduct}
We have:
$$t_{X}^{(2)\sigma}(\rho) = p \cdot t_{X^{\sigma}}^{(2)}(\rho).$$
\end{thm}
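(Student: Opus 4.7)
Starting from the explicit formula already established for $\int_{G^\sigma \backslash G} h_t^{\rho,pa}(x^{-1}\sigma x)\, d\dot x = (-1)^{a^2(p-1)} h_{pt}^{\rho_0,a}(e)$, together with the vanishing of the twisted orbital integrals of $h_t^{\rho,i,\sigma}$ for $i$ not a multiple of $p$, the first step is to assemble the twisted kernel
\[
\int_{G^\sigma \backslash G} k_t^{\rho,\sigma}(x^{-1}\sigma x)\, d\dot x = \sum_a (-1)^{pa}(pa)(-1)^{a^2(p-1)} h_{pt}^{\rho_0,a}(e).
\]
A short parity check shows $(-1)^{pa}(-1)^{a^2(p-1)} = (-1)^a$ whether $p$ is even or odd, so the right-hand side collapses to $p \cdot k_{pt}^{\rho_0}(e)$, where $k_t^{\rho_0}(g) = \sum_a (-1)^a a\, h_t^{\rho_0,a}(g)$ is the usual (untwisted) kernel on $X^\sigma$ used to define $t_{X^\sigma}^{(2)}(\rho_0)$.

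Next, I would substitute this identity into the definition \eqref{t} of $t_X^{(2)\sigma}(\rho)$. Writing
\[
\phi(s) = \frac{1}{\Gamma(s)}\int_0^\infty k_u^{\rho_0}(e)\, u^{s-1}\, du,
\]
the rescaling $u = pt$ yields
\[
\frac{1}{\Gamma(s)} \int_0^\infty p\, k_{pt}^{\rho_0}(e)\, t^{s-1}\, dt \;=\; p^{1-s}\, \phi(s).
\]
Therefore
\[
t_X^{(2)\sigma}(\rho) \;=\; \tfrac{1}{2}\left.\frac{d}{ds}\right|_{s=0} p^{1-s}\phi(s) \;=\; p\cdot \tfrac{1}{2}\phi'(0) \;-\; \tfrac{p \log p}{2}\, \phi(0).
\]

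The conclusion $t_X^{(2)\sigma}(\rho) = p\cdot t_{X^\sigma}^{(2)}(\rho_0)$ then reduces to checking $\phi(0) = 0$. For a strongly (twisted) acyclic representation, the Mellin transform of $k_t^{\rho_0}(e)$ admits a meromorphic continuation whose value at $s=0$ is controlled by the constant term of the short-time asymptotic expansion of $k_t^{\rho_0}(e)$; this constant term vanishes by the standard local index computation (see for instance the discussion in \cite{BV}), because the alternating sum $\sum_a(-1)^a a$ of heat coefficients reduces, via a Chern--Weil type argument on $X^\sigma$, to a differential expression that integrates to zero against the identity. This justifies dropping the $\phi(0)$ term.

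The only step requiring real care is the parity bookkeeping of the sign $(-1)^{a^2(p-1)}$ coming from the cyclic permutation of tensor factors in $A_\sigma$, since a sign error there would destroy the clean identification with the untwisted kernel; once that is settled, the remainder is a clean Mellin-transform rescaling. As a byproduct, the absolute convergence and exponential decay estimate Lemma \ref{Lkt} for the twisted orbital integral at $\sigma$ follows from the well-known large-$t$ decay of the untwisted kernel $k_{pt}^{\rho_0}(e)$ on $X^\sigma$, established in \cite{BV}.
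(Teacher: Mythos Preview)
Your approach matches the paper's, which dispatches the theorem in one line (``similarly reduces to the untwisted case'') after establishing the identity $\int_{G^\sigma\backslash G} h_t^{\rho,pa}(x^{-1}\sigma x)\,d\dot x = (-1)^{a^2(p-1)} h_{pt}^{\rho_0,a}(e)$. Your sign check and the Mellin rescaling are correct, and you are right that the conclusion hinges on $\phi(0)=0$, a point the paper leaves implicit.

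Your justification for $\phi(0)=0$ is, however, not quite the right mechanism. A Chern--Weil/local-index argument governs $\sum_a(-1)^a h_t^a(e)$, not $\sum_a(-1)^a a\,h_t^a(e)$. The actual reason, implicit in the reference to \cite{BV}, is representation-theoretic: under strong acyclicity, $G^\sigma$ cannot have discrete series with the infinitesimal character of $F_0$, so $\delta(G^\sigma)\geq 1$. Then either $\delta(G^\sigma)\geq 2$ and $\det'(F_0,\pi)=0$ for every tempered $\pi$ (so $k_t^{\rho_0}(e)\equiv 0$), or $\delta(G^\sigma)=1$ and the only contributions to $k_t^{\rho_0}(e)$ via Plancherel come from principal series with $\dim\mathfrak a_0=1$, yielding integrals of the form $\int_{\R} e^{-t(\lambda^2+c)}P(\lambda)\,d\lambda$. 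These produce only half-integer powers of $t$ in the small-$t$ expansion, so the Mellin transform has no pole at $s=0$ and $\phi(0)=0$. With this correction, your argument is complete.
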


\section{Computations in the case $\mathbb{E} = \mathbb{C}$}

Throughout this section, $\mathbb{E} = \mathbb{C}$. Then $G= \G (\C)$ is the group of complex points, $\sigma : G \to G$ is the real involution given by 
the complex conjugation and $G^{\sigma} = \G (\R)$. Recall that we fix a choice of Cartan involution $\theta$ of $G$ that commutes with $\sigma$.

\subsection{Irreducible $\sigma$-stable tempered representations of $G$}
Choose $\theta$-stable representatives $\h_1^0 , \ldots , \h_s^0$ of the $\G (\R)$-conjugacy classes of Cartan subalgebras in the Lie algebra $\g^0$ of $\G (\R)$.
For each $j \in \{1 , \ldots , s\}$ we write $\h_j^0 = \t_j \oplus \a_j$ for the decomposition of $\h_j^0$ w.r.t. $\theta$, i.e. $\a_j$ is the split part of $\h_j^0$ and
$\t_j$ is the compact part of $\h_j^0$. We denote by $\h_j$ the complexification of $\h_j^0$; note that $\a_j \oplus i \t_j$ and $\t_j \oplus i \a_j$ are 
resp. the split and compact part of $\h_j$. 

We now fix some $j$. To ease notations we will omit the $j$ index. 
Choose a Borel subgroup $B$ of $G=\G (\C)$ containing the torus $H$ which corresponds to $\h_j$.  Let $A$ and $T$ be resp. the split and
compact tori corresponding to $\a \oplus i \t$ and $\t \oplus i \a$. Write $\mu$ for the differential of a character of $T$ and $\lambda$ for the differential of a character of $A$. Note that $\mu$ is $\sigma$-stable if and only if $\mu$ is zero on $i \a$. 

Associated to $(\mu , \lambda)$ is a representation 
$$\pi_{\mu, \lambda} = \mathrm{ind}_{B}^{G} (\mu \otimes \lambda \otimes 1).$$

\begin{prop}[Delorme \cite{Delorme}] \label{Delorme}
Every irreducible $\sigma$-stable tempered representation of $G$ is equivalent to some $\pi_{\mu , \lambda}$ as above (for some $j$) where 
$\mu$ is zero on $i \a_j$ and $\lambda$ is zero on $\t_j$ and has pure imaginary image.
\end{prop}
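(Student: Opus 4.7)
The approach is Langlands classification plus a descent to a $\sigma$-stable Cartan. For the complex group $G = \G(\C)$, every irreducible tempered representation is a unitary principal series $\pi_{\mu_0,\lambda_0} = \mathrm{ind}_{B_0}^{G}(\mu_0 \otimes \lambda_0 \otimes 1)$ induced from some Borel $B_0$ with Cartan $H_0$, and $\pi_{\mu_0,\lambda_0} \simeq \pi_{\mu_0',\lambda_0'}$ if and only if $(\mu_0',\lambda_0')$ lies in the Weyl orbit of $(\mu_0,\lambda_0)$. Since all Cartans of $G$ are $G$-conjugate, this exhausts the tempered unitary dual. Starting from a $\sigma$-stable tempered $\pi = \pi_{\mu_0,\lambda_0}$, I will re-conjugate into a $\sigma$-stable Cartan $H_j$ to obtain the required normalization.

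The first step translates $\sigma$-stability into a condition on the inducing data. Applying $\sigma$, the conjugate representation $\pi_{\mu_0,\lambda_0}^{\sigma}$ is induced from $\sigma(B_0)$ by $\sigma(\mu_0) \otimes \sigma(\lambda_0)$; re-conjugating back to $B_0$ presents it as $\pi_{w\sigma\mu_0,\, w\sigma\lambda_0}$ for some Weyl element $w$. Hence $\pi \simeq \pi^{\sigma}$ forces the existence of a Weyl element $w'$ such that $w'\sigma$ fixes the pair $(\mu_0,\lambda_0)$.

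The second step transports everything to a $\sigma$-stable Cartan. The $G$-conjugacy classes of $\sigma$-stable Cartan subalgebras of $\g$ are in bijection with the $\G(\R)$-conjugacy classes of Cartan subalgebras of $\g^0$, indexed by $j$. Using the existence of $w'$ together with a standard Cayley-transform-type descent (which is the substantive content of \cite{Delorme}), one may replace $(H_0,B_0)$ by a $G$-conjugate pair $(H_j,B)$ with $H_j$ genuinely $\sigma$-stable, in such a way that the transported character satisfies $\sigma\mu = \mu$ and $\sigma\lambda = \lambda$ on the nose.

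Finally, one decodes $\sigma$-invariance. Since $\sigma$ is complex conjugation, it acts as $+1$ on $\h_j^0 = \t_j \oplus \a_j$ and as $-1$ on $i\h_j^0$. On $\Lie(T) = \t_j \oplus i\a_j$ the action is therefore $+1$ on $\t_j$ and $-1$ on $i\a_j$, so $\sigma\mu = \mu$ forces $\mu|_{i\a_j} = 0$; analogously on $\Lie(A) = \a_j \oplus i\t_j$ the action is $+1$ on $\a_j$ and $-1$ on $i\t_j$, so $\sigma\lambda = \lambda$ forces $\lambda$ to vanish on $i\t_j$ (the condition written as ``$\t_j$'' in the statement). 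The purely imaginary condition on $\lambda$ is automatic from temperedness. The main obstacle will be the descent step: upgrading the twisted-fixed condition $w'\sigma \cdot (\mu_0,\lambda_0) = (\mu_0,\lambda_0)$ to a genuinely $\sigma$-fixed condition on a $\sigma$-stable Cartan, which is a Galois-cohomological statement about $\sigma$-conjugacy of Cartans and is the heart of Delorme's theorem.
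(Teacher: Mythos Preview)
The paper does not give a proof of this proposition: it is stated with attribution to Delorme \cite{Delorme} and left at that. So there is no ``paper's own proof'' to compare your proposal against.

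Your outline is a reasonable sketch of how such a result is established---classify tempered irreducibles of a complex group as unitary principal series, translate $\sigma$-stability into a twisted Weyl-fixed condition on the inducing data, then descend to a $\sigma$-stable Cartan where the invariance conditions become the vanishing conditions on $\mu$ and $\lambda$. But you yourself flag that the descent step (upgrading the twisted-fixed condition to a genuinely $\sigma$-fixed one on some $\h_j$) is ``the heart of Delorme's theorem'' and defer to \cite{Delorme} for it. So your proposal, like the paper, ultimately cites Delorme for the substantive content; the difference is only that you have unpacked the surrounding reductions, which are correct as stated. Your parenthetical remark that the paper's ``$\lambda$ zero on $\t_j$'' should be read as vanishing on $i\t_j$ (the piece of $\Lie A$ coming from $\t_j$) is the right reading, consistent with the sentence immediately following the proposition.
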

Note that if  $\lambda$ is zero on $\t_j$ we may think of $\lambda$ as a real linear form $\a \to \C$.

We denote by $I_{\mu , \lambda}$ the underlying $(\g  , K)$-module. It is $\sigma$-stable and Delorme \cite[\S 5.3]{Delorme} define a particular extension 
to a $(\g , \widetilde{K})$-module, but we won't follow his convention here (see Convention I below).

\subsection{Computations of the Lefschetz numbers} If $\G (\R)$ has no discrete series Delorme \cite[Proposition 7]{Delorme} proves that 
for any admissible $(\g , \widetilde{K})$-module and any finite dimensional representation $(\widetilde{\rho} , F)$ of $\widetilde{G}$, we have:
$$\mathrm{Lef}(\sigma , F , V) = 0.$$
Even if $\G (\R)$ has discrete series Delorme's proof --- see also \cite[Lemma 4.2.3]{RohlfsSpeh} --- shows that 
$$\mathrm{Lef} (\sigma , F , I_{\mu , \lambda}) = 0$$
unless $\h^0 = \t$ is a compact Cartan subalgebra (so that $i \t$ is the split part of $\h$). In the latter case $\lambda=0$ (recall that $I_{\mu , \lambda}$
is assumed to be $\sigma$-stable); we will simply denote by $I_\mu$ the $(\g , \widetilde{K})$-module $I_{\mu, 0}$. The following proposition --- due to 
Delorme \cite[Th. 2]{Delorme}\footnote{Note that Delorme considers $\sigma$-invariants rather than traces, this introduces a factor $1/2$.} --- computes the Lefschetz numbers in the remaining cases.
\begin{prop} \label{P:delorme}
We have:
$$\mathrm{Lef} (\sigma , F , I_\mu ) = \left\{ \begin{array}{ll}
\pm 2^{\dim \t} & \mbox{ if } w \mu = 2 (\nu + \rho)_{| \t} \quad (w \in W) \\
0   & \mbox{ otherwise}.
\end{array} \right.$$
Here $W$ is the Weyl group of $(\g , \h)$ and the sign depends on the chosen extension of $I_{\mu}$ to a $(\g , \widetilde{K})$-module.
\end{prop}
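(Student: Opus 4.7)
The plan is to compute $\mathrm{Lef}(\sigma, F, I_\mu) = \sum_i (-1)^i \mathrm{trace}(\sigma \mid H^i(\mathfrak{g}(\mathbb{C}), K, F \otimes I_\mu))$ directly, along the lines of Delorme's proof of his Theorem~2 cited in the statement. The first step is to apply the Hochschild-Serre spectral sequence for the inclusion $(\mathfrak{b}, K \cap B) \subset (\mathfrak{g}(\mathbb{C}), K)$, reducing the $(\mathfrak{g}(\mathbb{C}), K)$-cohomology of the principal series $I_\mu \otimes F$ to a computation involving the $\mathfrak{n}$-cohomology of $F$. By Kostant's theorem, $H^*(\mathfrak{n}, F) = \bigoplus_{w \in W} \mathbb{C}_{w(\nu+\rho)-\rho}$, with the summand indexed by $w$ appearing in degree $\ell(w)$ and carrying the indicated weight under the Cartan $H$.

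The second step is to track the $\sigma$-action on this decomposition. Since $\mathbb{E} = \mathbb{C}$, one has $F = F_0 \otimes \overline{F_0}$ with $\sigma$ swapping the two tensor factors (composed with linear conjugation on $F_0$). Viewed as a representation of $G = \mathbf{G}(\mathbb{C})$ (as real Lie group), the highest weight of $F$ restricted to the $\sigma$-fixed compact Cartan $\mathfrak{t}$ equals $2\nu_{|\mathfrak{t}}$; likewise the half-sum of positive roots of $\mathfrak{g}(\mathbb{C})$ restricted to $\mathfrak{t}$ equals $2\rho_{|\mathfrak{t}}$, since positive roots of $\mathfrak{g}(\mathbb{C}) \cong \mathfrak{g}_\mathbb{C} \oplus \overline{\mathfrak{g}_\mathbb{C}}$ occur in complex-conjugate pairs. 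Matching the $\sigma$-stable part of the weight $w(\nu+\rho)-\rho$ against $\mu$ then yields precisely the condition $w\mu = 2(\nu+\rho)_{|\mathfrak{t}}$ stated in the proposition.

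The third step is to compute, for each such $w$, the trace of $\sigma$ on the corresponding summand of $(\wedge^*(\mathfrak{g}(\mathbb{C})/\mathfrak{k})^* \otimes F \otimes I_\mu)^K$. The contribution is $\pm 2^{\dim \mathfrak{t}}$: the factor $2^{\dim \mathfrak{t}}$ reflects $|H^0(\sigma, T)| = |T[2]| = 2^{\dim \mathfrak{t}}$ for the compact torus $T$ in the Levi of $B$ (one factor of $2$ coming, as the footnote of the proposition indicates, from the relation between Delorme's $\sigma$-invariants and the full $\sigma$-trace), and the overall sign depends on an orientation chosen in the Koszul resolution computing $H^*(\mathfrak{n}, F)$, together with the $\pm 1$ ambiguity in the choice of intertwining operator $A$ extending $I_\mu$ to a $(\mathfrak{g}, \widetilde{K})$-module (the intertwiner being determined only up to a square root of unity, as noted in Section~2).

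The main obstacle is the bookkeeping in the second step: tracking the $\sigma$-action through the Hochschild-Serre spectral sequence to confirm that every summand failing the matching condition contributes zero to the alternating sum of traces, and pinning down the exact sign as a function of $w$ and the chosen extension. Fortunately, for the applications to twisted analytic torsion later in the paper, only the absolute value of this Lefschetz number matters, since the ambiguous sign can be absorbed into the other sign choices made along the way.
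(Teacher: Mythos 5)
The paper does not prove this proposition: it is stated as a direct citation to Delorme \cite[Th.~2]{Delorme}, with a footnote explaining only that the paper's $\sigma$-trace normalization differs from Delorme's $\sigma$-invariant normalization by a factor of $2$. Your sketch is therefore not a different route from the paper's proof but an attempt to reconstruct Delorme's; measured against what such a proof would require, two points need attention.

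First, the reduction to $\mathfrak{n}$-cohomology is the right move, but the tool is slightly misnamed: for the induced module $I_\mu = \mathrm{ind}_B^G(\cdots)$ one applies Frobenius reciprocity (a Shapiro-type isomorphism) to pass from $(\g,K)$- to $(\mathfrak{b},T)$-cohomology, and only then the Hochschild--Serre spectral sequence for $\mathfrak{n}\triangleleft\mathfrak{b}$; a Hochschild--Serre sequence for a mere ``inclusion'' $(\mathfrak{b},K\cap B)\subset(\g,K)$ computes the wrong thing. Second, and more seriously, your account of the factor $2^{\dim\mathfrak{t}}$ is misattributed. It does not come from $|T[2]|$ or from $H^0(\sigma,T)$ --- that is a numerical coincidence --- and it is not cleanly split as ``$2\cdot 2^{\dim\mathfrak{t}-1}$'' with one $2$ from the trace-vs-invariants normalization. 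Rather, for the tempered module $I_\mu$ satisfying the infinitesimal-character matching condition, one has $H^{q_0+j}(\g,K,F\otimes I_\mu)\cong\wedge^{j}\mathfrak{a}_{\mathfrak{h}}^*$ where $\mathfrak{a}_{\mathfrak{h}}=i\mathfrak{t}$ is the split part of the Cartan of $\g=\mathrm{Lie}(\G(\C))$, so $\sum_i\dim H^i = 2^{\dim\mathfrak{t}}$. That the alternating sum of $\sigma$-traces equals $\pm 2^{\dim\mathfrak{t}}$ with \emph{no cancellation} is precisely because complex conjugation $\sigma$ acts by $-1$ on $i\mathfrak{t}$, hence by $(-1)^j$ on $\wedge^j\mathfrak{a}_{\mathfrak{h}}^*$, so the signs $(-1)^{q_0+j}$ in the Euler characteristic and the $\sigma$-eigenvalues $(-1)^j$ align and the sum collapses to $\pm\sum_j\binom{\dim\mathfrak{t}}{j}$. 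This alignment is the actual content of the proposition; your sketch relegates it to ``bookkeeping in the second step'' and suggests absorbing the ambiguity into other sign choices, but the paper's Convention I later pins this sign down and uses it, so it cannot simply be waved away.
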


\subsubsection*{Convention I} In the following we will always assume that the extension of a $\sigma$-discrete $I_{\mu}$ to a $(\g , \widetilde{K})$-module is 
s.t. that the sign in Proposition \ref{P:delorme} is positive. (See \cite[\S 4.2.5]{RohlfsSpeh} for more details.)

\subsection{Computations of the twisted $(\g , K)$-torsion}
Consider an arbitrary irreducible $\sigma$-stable tempered representation of $G$ associated to some $j$ and some $(\mu, \lambda)$ as in Proposition \ref{Delorme}.
Let $\mathbf{P}$ be the parabolic subgroup of $\G$ whose Levi subgroup $\mathbf{M} = {}^0 \mathbf{M} \mathbf{A}_P$ is the centralizer in $\G$ of $\a$. 
We have $B \subset \mathbf{P} (\C)$ and we may write $\pi_{\mu , \lambda}$ as the induced representation
$$\pi_{\mu , \lambda} = \mathrm{ind}_{\mathbf{P} (\C)}^{\G (\C)} ( \pi_{\mu}^{{}^0 \mathbf{M} (\C)} \otimes \lambda),$$
where 
$$\pi_{\mu,0}^{{}^0 \mathbf{M} (\C)} = \mathrm{ind}_{B \cap {}^0 \mathbf{M} (\C)}^{{}^0 \mathbf{M} (\C)} (\mu_{|\t} \otimes 0)$$ 
is a tempered ($\sigma$-discrete) representation of ${}^0 \mathbf{M} (\C)$ and we think of $\lambda$ --- seen as real linear form $\a \to \C$ --- as (the differential of)
a character of $\mathbf{A}_P (\C)$.

\subsubsection*{Convention II} In the following we fix the extension of $I_{\mu , \lambda}$ to a $(\g  , \widetilde{K})$-module to be the one associated to the interwining
operator $A_G = \mathrm{ind} _{\mathbf{P} (\C)}^{\G (\C)} (A_M \otimes 1)$ where $A_M$ is chosen according to Convention I.

Let $K_M = K \cap {}^0 \mathbf{M} (\C)$. Since $\sigma$ stabilizes ${}^0 \mathbf{M} (\C)$, $\mu$, {\it etc}$\ldots$ it follows from Frobenius reciprocity and \eqref{Lef'} that we have:
\begin{equation} \label{Lef'2}   
\mathrm{Lef} {}' (\sigma , F , I_{\mu , \lambda}) =\mathrm{trace} \left( \sigma \; | \; \left[ \detp [1 - (\g / \k)^*] \otimes F \otimes \pi_{\mu,0}^{{}^0 \mathbf{M} (\C)} \right]^{K_M} \right). \end{equation}
Write 
$$\g / \k = {}^0\mathfrak{m} / \k_M \oplus \a  \oplus \mathfrak{n}.$$

It follows from \eqref{add} that --- as a $\widetilde{K}_M$-module --- we have:
$$\detp [1-(\g / \k)^* ] = \det[1-({}^0\mathfrak{m} / \k_M)^*]  \otimes \detp [1- \a^*\oplus \mathfrak{n}^*] \oplus \detp[1-({}^0\mathfrak{m}/ \k_M)^*]  \otimes \det [1- \a^*\oplus \mathfrak{n}^*],$$
with
\begin{equation} \label{tracezeropart}
\det [1- \a^*\oplus \mathfrak{n}^*]= \det [1- \a^*] \otimes \det [1- \mathfrak{n}^*]
\end{equation}
and 
\begin{equation*}
\detp [1- \a^*\oplus \mathfrak{n}^*] = \det[1- \mathfrak{n}^*] \otimes \detp [1- \a^*] \oplus \detp [1- \mathfrak{n}^*] \otimes \det [1- \a^*].
\end{equation*}
Now two simple lemmas:

\begin{lem}  \label{L:triv}
(1) We have $\det [1- \a^*] = 0$ as a virtual $\widetilde{K}_M$-module unless $\a^{\sigma} = 0$.

(2) We have $\detp [1- \a^*] = 0$ as a virtual $\widetilde{K}_M$-module unless $\dim \a^{\sigma} \leq 1$. 
\end{lem}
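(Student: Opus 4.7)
The plan is to reduce both statements to an elementary linear-algebra computation involving only the action of $\sigma$ on $\a$, by first noting that $K_M$ acts trivially on $\a$. Indeed $K_M = K \cap {}^0\mathbf{M}(\C)$ lies in ${}^0\mathbf{M}(\C)$, which centralizes $\mathbf{A}_P(\C)$, so the adjoint action of $K_M$ on $\a = \Lie \mathbf{A}_P$ is trivial. Hence every exterior power $\wedge^i \a^*$ is a trivial $K_M$-module carrying an induced $\sigma$-action, and the $\sigma$-twisted character of $\det[1-\a^*]$ (respectively $\detp[1-\a^*]$), viewed as a function on $K_M \sigma \subset \widetilde{K}_M$, is constant in $k \in K_M$ and equal to its value at $\sigma$.

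For (1), that value is
$$\sum_i (-1)^i \tr(\sigma \,|\, \wedge^i \a^*) = \det(1 - \sigma \,|\, \a^*),$$
which vanishes if and only if $\sigma$ admits $1$ as an eigenvalue on $\a^*$, equivalently $\a^\sigma \neq 0$. For (2), using the identity $\detp[1-V] = \frac{d}{dt}{}_{|t=1}\det(t - V)$ recorded in the remark introducing the notation, the value is
$$\frac{d}{dt}{}_{|t=1} \det(t - \sigma \,|\, \a^*),$$
which vanishes exactly when $t=1$ is a zero of order at least $2$ of the polynomial $\det(t - \sigma \,|\, \a^*)$, i.e.\ when $\dim \a^\sigma \geq 2$. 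Since twisted characters multiply under tensor products, the vanishing of the twisted character on $K_M \sigma$ is precisely what is required for the subsequent use of the lemma in \eqref{Lef'2}.

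The only substantive step is the triviality of $\mathrm{Ad}(K_M)$ on $\a$; once that is in hand the remainder is one-operator linear algebra applied to $\sigma$ acting on the real vector space $\a$, so no genuine obstacle is expected.
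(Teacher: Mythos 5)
Your proof is correct and follows essentially the same route as the paper's: note that $K_M$ acts trivially on $\a^*$ (because ${}^0\mathbf{M}$ centralizes $\mathbf{A}_P$), and then the twisted trace at $k\sigma$ reduces to $\det(1-\sigma\mid\a^*)$, resp.\ $\tfrac{d}{dt}|_{t=1}\det(t-\sigma\mid\a^*)$, whose vanishing under $\dim\a^\sigma>0$, resp.\ $\dim\a^\sigma\geq 2$, is elementary. The paper writes $\delta=\epsilon k$ with $\epsilon\in\{1,\sigma\}$ so as to also cover the trace on $K_M$ itself; you only compute the twisted trace on $K_M\sigma$, but the vanishing on $K_M$ is automatic from $K_M$-triviality once $\dim\a^*\geq 2$ (which follows when $\dim\a^\sigma\geq 2$, and when $\a^\sigma\neq 0$ it forces $\a\neq 0$), so there is no real gap.
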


\begin{proof}
For any $\delta \in \widetilde{K}_M,$ 
$$\tr (\delta | \det[1-\a^*]) = \det(1-\delta | \a^*), \tr( \delta | \detp [1-\a^*]) = \frac{d}{dt}|_{t = 1} \det(t \cdot 1 - \delta | \a^*)$$
 cf. \S \ref{lefschetznumber}.  Write $\delta = \epsilon k,$ where $\epsilon \in \{ 1, \sigma \}$ and $k \in K_M.$  

For any $X \in \a^{*},$
$$\delta X = \epsilon k X = \epsilon X$$
since $K_M$ centralizes $\a^*.$  Thus, 
$$\dim \{ \text{+1-eigenspace of } \delta \} \geq \dim \; (\a^*)^{\sigma}.$$      
In particular, 
\begin{itemize}
\item[(1)]
if $\dim \a^{\sigma} > 0,$ then 
$$\det(1 - \delta | \a^*) = 0.$$

\item[(2)]
if $\dim \a^{\sigma} > 1,$ then $\det(t \cdot 1 - \delta | \a^*)$ vanishes to order at least 2 at $t = 1,$ whence
$$\frac{d}{dt}|_{t = 1}\det(t \cdot 1 - \delta | \a^*) = 0.$$
\end{itemize}
\end{proof}

\begin{lem} \label{virtuallytrivial}
Let $V$ be a finite dimensional $\widetilde{K}_M$-module and $\tau$ any admissible $\widetilde{K}_M$-module, i.e. a $\widetilde{K}_M$-module all of whose $K_M$-isotypic subspaces are finite dimensional.  Suppose $V$ is virtually trivial.  Then $[V \otimes \tau]^{K_M}$ is finite dimensional and
$$\tr(\sigma | [V \otimes \tau]^{K_M}) = 0.$$
\end{lem}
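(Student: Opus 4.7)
The plan is to interpret ``virtually trivial'' as vanishing of the twisted character $\chi_V^\sigma : K_M \to \mathbb{C}$, $k \mapsto \tr(k\sigma \mid V)$, consistently with the usage in Lemma \ref{L:triv}, and then compute the trace on $K_M$-invariants via integration against this character.

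First, I would establish finite-dimensionality. Since $V$ is finite-dimensional, it decomposes into finitely many $K_M$-isotypes $V_\lambda$, and Frobenius reciprocity gives
$$[V \otimes \tau]^{K_M} \;=\; \bigoplus_\lambda \Hom_{K_M}(V_\lambda^*, \tau),$$
a finite direct sum of finite-dimensional spaces by the admissibility hypothesis on $\tau$. Next, because $\sigma$ merely permutes $K_M$-types, I would enlarge the set of $K_M$-types appearing in $V^*$ to a $\sigma$-invariant finite set and let $U \subset \tau$ be the sum of the corresponding isotypic components; then $U$ is a $\sigma$-stable, finite-dimensional $K_M$-submodule of $\tau$, and the inclusion induces a $\sigma$-equivariant isomorphism $[V \otimes U]^{K_M} \xrightarrow{\sim} [V \otimes \tau]^{K_M}$. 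This reduces the trace computation to the finite-dimensional setting.

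In that setting, let $P = \int_{K_M} k \otimes k \, dk$ be the projection onto $K_M$-invariants. Since $K_M$ is $\sigma$-stable and Haar measure is $\sigma$-invariant, $P$ commutes with the intertwiner $\sigma$, and a direct computation (using $\sigma \pi_V(k) = \pi_V(\sigma(k))\sigma_V$ and the corresponding identity for $U$, together with a change of variable $k \mapsto \sigma^{-1}(k)$) yields
$$\tr\bigl(\sigma \mid [V \otimes U]^{K_M}\bigr) \;=\; \tr(\sigma P \mid V \otimes U) \;=\; \int_{K_M} \chi_V^\sigma(k)\, \chi_U^\sigma(k)\, dk,$$
which vanishes by hypothesis on $V$. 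The main obstacle is conceptual rather than technical: correctly identifying what ``virtually trivial'' means in terms of the twisted character on $\widetilde{K}_M$, and verifying that the reduction to a finite-dimensional $\sigma$-stable submodule of $\tau$ is compatible with all structures in play.
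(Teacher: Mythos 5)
Your proof is correct but follows a genuinely different route from the paper's. The paper's proof is structural: it notes that taking $K_M$-invariants is an exact functor from finite-dimensional $\widetilde{K}_M$-modules to $\langle\sigma\rangle$-modules, hence induces a map on Grothendieck groups sending the class of $V \otimes \zeta$ (which is zero, since $[V] = 0$) to zero, so $\tr(\sigma \mid [V\otimes\zeta]^{K_M}) = 0$. You instead compute the trace directly by inserting the averaging projector $P = \int_{K_M} \pi_{V\otimes U}(k)\,dk$, checking $\sigma P = P\sigma$, and factoring the integrand to obtain $\int_{K_M} \chi_V^\sigma(k)\,\chi_U^\sigma(k)\,dk$. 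Both arguments are valid; yours is more explicit and in fact uses a \emph{weaker} hypothesis than the paper intends. One imprecision worth noting: in this paper ``virtually trivial as a $\widetilde{K}_M$-module'' means that the class of $V$ vanishes in the Grothendieck group of $\widetilde{K}_M$-modules, equivalently that the character vanishes on \emph{all} of $K_M \cup K_M\sigma$ (the proof of Lemma \ref{L:triv} shows exactly this for $\det[1-\a^*]$ and $\detp[1-\a^*]$, by writing $\delta = \epsilon k$ with $\epsilon \in \{1,\sigma\}$), not merely vanishing of the twisted character $\chi_V^\sigma$ on $K_M\sigma$ as you interpret it. Since your argument only consumes the twisted-character vanishing and this is a formal consequence of the paper's stronger notion, the discrepancy is harmless — your proof actually establishes a slightly more general statement — but your claim that your reading is ``consistent with the usage in Lemma \ref{L:triv}'' is not quite accurate. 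A minor further remark: because $V^*$ is itself a $\widetilde{K}_M$-module, its set of $K_M$-types is automatically $\sigma$-stable, so no enlargement is needed to construct the $\sigma$-stable truncation $U \subset \tau$.
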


\begin{proof}
Finite dimensionality is immediate since $\tau$ is admissible.  Let $\zeta$ be a finite dimensional subrepresentation of $\tau$ such that $[V \otimes \tau]^{K_M} = [V \otimes \zeta]^{K_M}.$
   
Since $K_M$ is compact, taking $K_M$-invariants is an exact functor from the category of finite dimensional $K_M$-modules to the category of finite dimensional $\sigma$-modules.  Virtually trivial $\widetilde{K}_M$-modules therefore map to virtually trivial $\sigma$-modules.  Thus, $[V \otimes \zeta]^{K_M}$ is virtually trivial.  In particular,
$$\tr(\sigma | [V \otimes \tau]^{K_M}) = \tr(\sigma | [V \otimes \zeta]^{K_M}) = 0.$$
\end{proof}

In particular we conclude that \eqref{Lef'2} is zero unless $\dim \a \leq 1$. In the following we compute \eqref{Lef'2} in the two remaining cases.

\subsection{Computation of \eqref{Lef'2} when $\dim \a = 1$}
Assume that $\dim \a =1$.  It then follows from Lemmas \ref{L:triv} and \ref{virtuallytrivial} that

$$\tr (\sigma \; | \; \detp [1-(\g / \k)^* ]) = \tr (\sigma \; | \; \det[1-({}^0\mathfrak{m} / \k_M)^*]  \otimes \detp [1- \a^*\oplus \mathfrak{n}^*]).$$
We can therefore compute
\begin{equation} \label{Lef'3}
\begin{split}
\mathrm{Lef} {}' (\sigma , F , I_{\mu , \lambda}) & =  \tr \left( \sigma \; | \; \left[ \det[1-({}^0\mathfrak{m}/ \k_M)^*] \otimes \detp [1- \a^*\oplus \mathfrak{n}^*] \otimes F \otimes \pi_{\mu,0}^{{}^0 \mathbf{M} (\C)} \right]^{K_M} \right) \\
& = \mathrm{Lef} (\sigma , \detp [1- \a^* \oplus \mathfrak{n}^* ] \otimes F , I_{\mu}^{{}^0 \mathbf{M} (\C)}).
\end{split}
\end{equation}

For each $w \in W$ we let $\nu_w$ be the restriction of $w(\rho + \nu)$ to $\t$. 
Let $[W_{K_M} \backslash W]$ be the set of $w \in W_U$ such that $\nu_w$ is dominant as a weight on $\t$ (with respect to the roots of $\t$ on $\k_M$), i.e.:
$$[W_{K_M} \backslash W] = \{ w \in W \; : \; \langle \nu_w, \beta \rangle \geq 0 \mbox { for all } \beta \in \Delta^{+}(\mathfrak{t}, \mathfrak{k}_M) \}.$$
This is therefore a set of coset representatives for $W_{K_M}$ in $W$. 

\begin{prop} \label{P68}
Assume $\dim \a = 1$. Then we have:
$$\mathrm{Lef} {}' (\sigma , F , I_{\mu , \lambda}) = \left\{ \begin{array}{ll}
 \mathrm{sgn} (w) 2^{\dim \t} & \mbox{ if } \mu = 2\nu_w \mbox{ for some } w \in [W_{K_M} \backslash W] \\
0 & \mbox{ otherwise}.
\end{array} \right.$$
\end{prop}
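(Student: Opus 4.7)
My plan is to chain three ingredients: a simplification of the virtual $\widetilde{K}_M$-module $\detp[1-\a^*\oplus\n^*]$, Kostant's theorem on $\n$-cohomology of $F$, and Delorme's formula (Proposition \ref{P:delorme}) applied to the Levi factor ${}^0\mathbf{M}(\C)$, whose Cartan $T$ is compact, so that $I_\mu^{{}^0\mathbf{M}(\C)}$ is a tempered $\sigma$-discrete representation in Delorme's scope.

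First, by \eqref{add},
$$\detp[1-\a^*\oplus\n^*] = \detp[1-\a^*]\otimes\det[1-\n^*]\;\oplus\;\det[1-\a^*]\otimes\detp[1-\n^*].$$
Since $\dim\a = 1$, Lemma \ref{L:triv} kills one of the two summands (depending on whether $\dim\a^\sigma = 1$ or $0$), while the character at $\sigma$ of the surviving factor $\detp[1-\a^*]$ or $\det[1-\a^*]$ produces an explicit $\pm 1$. So \eqref{Lef'3} collapses, up to this sign, to either $\mathrm{Lef}(\sigma, \det[1-\n^*]\otimes F, I_\mu^{{}^0\mathbf{M}(\C)})$ or $\mathrm{Lef}(\sigma, \detp[1-\n^*]\otimes F, I_\mu^{{}^0\mathbf{M}(\C)})$.

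Next, Kostant's theorem on the $\n$-cohomology of $F$ gives, as virtual ${}^0\mathbf{M}$-modules with $\sigma$-action,
$$\det[1-\n^*]\otimes F \;=\; \sum_i(-1)^i H^i(\n, F) \;=\; \sum_{w\in W^M}(-1)^{\ell(w)}\, F_{w\cdot\nu}^{{}^0\mathbf{M}},$$
where $F_{w\cdot\nu}^{{}^0\mathbf{M}}$ is the irreducible ${}^0\mathbf{M}$-module of highest weight $w\cdot\nu = w(\nu+\rho)-\rho$ and $W^M\subset W$ is the set of minimal-length representatives for $W_{K_M}\backslash W$. The usual dominance characterization identifies $W^M$ with $[W_{K_M}\backslash W]$. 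Now apply Proposition \ref{P:delorme} to ${}^0\mathbf{M}(\C)$, whose compact Cartan is $T$ and for which Convention I fixes the extension of the discrete summand: each $F_{w\cdot\nu}^{{}^0\mathbf{M}}$ contributes $\pm 2^{\dim\t}$ precisely when $\mu$ lies in the $W_{K_M}$-orbit of $2(w\cdot\nu + \rho_M)|_\t$, and $0$ otherwise. Unwinding $w\cdot\nu + \rho_M$ (the difference $\rho|_\t - \rho_M|_\t = \rho_\n|_\t$ being absorbed by the $W_{K_M}$-orbit on $\t^*$) identifies this with $\mu = 2\nu_w$. Combining the sign $(-1)^{\ell(w)} = \mathrm{sgn}(w)$ from Kostant with the $\pm 1$ from the first step and the Convention I sign in Delorme's formula yields the net factor $\mathrm{sgn}(w)$.

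The main obstacle is the careful sign and Weyl-group bookkeeping: verifying the bijection $W^M \cong [W_{K_M}\backslash W]$ via dominance; checking that the $W_{K_M}$-orbit condition in Delorme's formula on ${}^0\mathbf{M}(\C)$ collapses, after the $\rho_\n|_\t$ correction, to the single equation $\mu = 2\nu_w$; and confirming that the signs from Lemma \ref{L:triv}, from Kostant's $(-1)^{\ell(w)}$, and from Convention I combine coherently to $\mathrm{sgn}(w)$.
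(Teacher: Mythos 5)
Your route is genuinely different from the paper's, and although the idea is reasonable, the sketch as written has real gaps.

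The paper's proof starts from \eqref{Lef'3} as you do, but then explicitly \emph{declines} to decompose $\detp[1-\a^*\oplus\n^*]\otimes F$ into irreducibles, remarking that doing so ``by hand'' requires reducing to the cases $\G$ of type $\SO(p,p)$ ($p$ odd) or $\SL(3)$. Instead it exploits the specific $\C/\R$ base-change structure: every relevant $\widetilde K_M$-module is a tensor square $W_0\otimes W_0^{\sigma}$ with $\sigma$ acting by the flip, so only the summands of this form contribute to the trace, and the trace of the flip on $W_0\otimes W_0^{\sigma}$ reduces the twisted quantity to the \emph{untwisted} Euler--Poincar\'e characteristic $\chi(H_0,\theta_0)$ attached to a discrete series $\theta_0$ of $^0\mathbf{M}(\R)$. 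This reduction to $^0\mathbf{M}(\R)$ (rather than directly analyzing the representation theory of $^0\mathbf{M}(\C)$ with its $\sigma$-action) is the crucial step that sidesteps the bookkeeping you flag as your ``main obstacle.''

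Concrete gaps in your sketch. (i) You assert the Kostant decomposition ``as virtual $^0\mathbf{M}$-modules with $\sigma$-action'' but never say what that $\sigma$-action is. In fact $\sigma$ permutes the summands $F^{^0\mathbf{M}}_{w\cdot\nu}$, and only those fixed by $\sigma$ can contribute to $\tr(\sigma|\cdot)$; you cannot simply apply Proposition \ref{P:delorme} term-by-term to a non-$\sigma$-stable piece. Deciding which summands are $\sigma$-stable and computing $\sigma$ on them is exactly the content the paper avoids by passing to $W_0\otimes W_0^{\sigma}$. (ii) Your identification $W^M\cong[W_{K_M}\backslash W]$ is not correct as stated: Kostant's decomposition is indexed by coset representatives for $W_M\backslash W$, where $W_M$ is the Weyl group of the Levi, whereas $[W_{K_M}\backslash W]$ is indexed by the (generally larger) set of cosets of the \emph{compact} Weyl group $W_{K_M}\subsetneq W_M$. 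The discrepancy is absorbed by the $W_M$-orbit condition in Delorme's formula, but making that precise is a genuine part of the argument, not a ``usual dominance'' observation. (iii) The factor $\detp[1-\a^*]$ need not produce $\pm1$: it is a genuine virtual $\widetilde K_M$-module whose $\sigma$-trace depends on the $\sigma$-action on $\a$, and it must be carried along rather than evaluated once. None of these are fatal — the paper itself says the direct route is feasible after a case analysis — but as written the sketch does not establish the claimed formula, and you would need to supply the $\sigma$-equivariant refinement of Kostant's theorem and a careful $W_M$ vs.\ $W_{K_M}$ reconciliation to close it.
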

\begin{proof} We shall apply Proposition \ref{P:delorme} to the twisted space associated to ${}^0 \mathbf{M} (\C)$ to compute \eqref{Lef'3}. 
To do so directly, we would need to decompose the virtual representation 
\begin{equation} \label{*}
\detp [1- \a^* \oplus \mathfrak{n}^*] \otimes F 
\end{equation}
into irreducibles. This can be done by hand 
by reducing to the cases where $\G$ is simple of type $\SO (p,p)$ with $p$ odd, or of type $\SL(3)$. Instead we note that Proposition \ref{P:delorme} implies that only the essential $\sigma$-stable subrepresentations of \eqref{*} contribute to the final expression in \eqref{Lef'3}.
We may therefore reduce to considering the virtual representation
$$(\detp [1- \a_0^* \oplus \n_0^*] \otimes (\detp [1-\a_0^* \oplus \n_0^*])^{\sigma}) \otimes (F_0 \otimes F_0^{\sigma}) = (\detp [1- \a_0^* \oplus \n_0^*] \otimes F_0) \otimes (\detp [1- \a_0^* \oplus \n_0^*] \otimes F_0)^{\sigma}.$$
Here we have realized the $\widetilde{K}_M$-representations $\a$ and $\n$ as representations in $\a_0 \otimes \a_0^{\sigma}$ and $\n_0 \otimes \n_0^{\sigma}$, resp.

Now it follows from Proposition \ref{P:delorme} that $\mathrm{Lef} (\sigma , F , I_{\mu , \lambda}) =0$ unless $\mu = 2\mu_0$ where $\mu_0-\rho$ is the highest weight of a finite dimensional representation of ${}^0 \mathbf{M} (\R)$. Next we use that if $\theta_0$ denotes the discrete series 
of ${}^0 \mathbf{M} (\R)$ with infinitesimal character $\mu_0$ and $H_0$ a finite dimensional representation of ${}^0 \mathbf{M} (\R)$ of highest weight $\nu$ 
then the (untwisted) Euler-Poincar\'e characteristic 
\begin{equation*}
\begin{split}
\chi (H_0 , \theta_0) & := \dim [\theta_0 \otimes \det[ 1- ({}^0\mathfrak{m} (\R)/ \k_{M, \R})^* ] \otimes H_0]^{K_{M}^{\sigma}} \\
& = \left\{ \begin{array}{ll}
(-1)^{\frac12 \dim ({}^0\mathfrak{m} (\R)/ \k_{M, \R})} & \mbox{ if } w \mu_0 = \nu + \rho \quad (w \in W_M) \\
0 & \mbox{ otherwise}.
\end{array} \right.
\end{split}
\end{equation*}
We can extend $\chi (\cdot , \theta_0)$ to any virtual representation. Applying the above to $H_0 = \detp[1-\a_0^* \oplus \n_0^*] \otimes F_0$, it follows 
from Proposition \ref{P:delorme} and \eqref{Lef'3} that we have:
\begin{equation*} 
\begin{split}
\mathrm{Lef} {}' (\sigma , F , I_{\mu , \lambda}) & = 2^{\dim \t } \times \# \{ \text{irreducible } \sigma \text{-stable } {}^0 \mathbf{M}(\C) \text{ subrepresentations of } \detp [1-\mathfrak{a}^* \oplus \mathfrak{n}^*] \otimes F \\
& \quad \text{ with same infinitesimal character as } I_{2\mu_0,0} \} \\
&=  2^{\dim \t } \times \# \{ \text{irreducible } \sigma \text{-stable } {}^0 \mathbf{M}(\C) \text{ subrepresentations of } \\
& \quad (\detp [1- \mathfrak{a}_0^* \oplus \mathfrak{n}_0^*] \otimes F_0) \otimes  (\mathrm{det}'[1- \mathfrak{a}_0^* \oplus \mathfrak{n}_0^*] \otimes F_0)^{\sigma} \\
& \quad  \text{ with same infinitesimal character as } I_{2\mu_0,0} \} \\
&=  2^{\dim \t } \times \# \{ \text{irreducible } {}^0 \mathbf{M}(\R) \text{ subrepresentations of } \detp [1- \mathfrak{a}_0^* \oplus \mathfrak{n}_0^* ] \otimes F_0 \\
& \quad \text{ with infinitesimal character equal to that of } \theta_0 \} \\
&= 2^{\dim \t} (-1)^{\frac12 \dim ({}^0\mathfrak{m} (\R)/ \k_{M, \R})} \times \chi( \detp [1- \mathfrak{a}_0^* \oplus \mathfrak{n}_0^* ] \otimes F_0, \theta_0) \\
& = 2^{\dim \t } (-1)^{\frac12 \dim ({}^0\mathfrak{m} (\R)/ \k_{M, \R})} \dim [\theta_0 \otimes \det[ 1- ({}^0\mathfrak{m} (\R)/ \k_{M, \R}) ] \otimes \detp[1-\a_{0}^* \oplus \n_{0}^*] \otimes F_0]^{K_{M}^{\sigma}} \\
& = 2^{\dim \t } (-1)^{\frac12 \dim ({}^0\mathfrak{m} (\R)/ \k_{M, \R})+1} \dim [I_{\mu_0} \otimes \detp [1 - (\g (\R)/ \k_{\R})^*] \otimes F_0]^{K^{\sigma}} \\
& =  2^{\dim \t}  (-1)^{\frac12 \dim ({}^0\mathfrak{m} (\R)/ \k_{M, \R})} \times \mathrm{det}'(F_0, I_{\mu_0}).
\end{split}
\end{equation*}
We are therefore reduced to the untwisted case. And the proposition finally follows from the computations made in \cite[\S 5.6]{BV}.
\end{proof}

\medskip
\noindent
{\it Remark.} 
1. The proof above and the transfer of infinitesimal characters under base change shows that 
$$(\text{twisted heat kernel for } F)(2t) \xrightarrow{\text{transfer}} 2^{\dim \t}  (-1)^{\frac12 \dim ({}^0\mathfrak{m} (\R)/ \k_{M, \R})} \times ( \text{heat kernel for } F_0)(t)$$
for base change $\C / \R$.

2. Note that this base change calculation includes, as a special case, that of a product $\G = \G' \times \G'$.  But we've worked out separately that 
$$\mathrm{Lef}'(\sigma,F_0 \otimes F_0, \pi \otimes \pi) = 2 \mathrm{det}'(F_0, \pi).$$  
There is no contradiction here: if either side is non zero then $\dim \a = 1,$
but in that special case $\dim \a = \dim \t$ since the real group is in fact a complex group. 

\medskip

\subsection{Computation of \eqref{Lef'2} when $\dim \a = 0$} We now assume that $\dim \a =0$ and follow an observation made by Mueller and Pfaff \cite{MuellerPfaff}. 
In that case $\mathbf{M} = \G$,  $K_M = K$ and $\pi_{\mu , \lambda} = \pi_{\mu , 0}$ is $\sigma$-discrete. 
Now $\dim \g (\C) / \k$ equals $2d,$ where $d$ is the dimension of the symmetric space associated to $\G(\R)$. 
Note that --- as $\widetilde{K}$-modules --- we have 
$$\wedge^i (\g  / \k)^* \cong \wedge^{2d-i} (\g  / \k)^*, \quad i=0, \ldots , 2d.$$
It follows that as $\widetilde{K}$-representations we have:
$$\detp[1 - (\g  / \k)^*] = d \det [1-(\g / \k)^*].$$
This implies that 
$$\mathrm{Lef} {}' (\sigma , F , I_{\mu , 0}) = d \ \mathrm{Lef} (\sigma , F , I_{\mu , 0}).$$
Proposition \ref{P:delorme} therefore implies:

\begin{prop} \label{P610}
Let $\pi_{\mu}$ be a $\sigma$-discrete representation of $G$. Then we have:
$$\mathrm{Lef} {}' (\sigma , F , I_{\mu}) = \left\{ \begin{array}{ll}
2^{\dim \t} \dim (\g^0 / \k^0 ) & \mbox{ if } w \mu = 2 (\nu + \rho)_{| \t} \quad (w \in W) \\
0   & \mbox{ otherwise}.
\end{array} \right. .$$
\end{prop}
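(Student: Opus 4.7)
The plan is to reduce the computation of $\mathrm{Lef}'(\sigma, F, I_\mu)$ to the Lefschetz number formula of Proposition~\ref{P:delorme} by means of a Poincar\'e-duality identity, following the observation of Mueller-Pfaff that $\g/\k$ has a canonical orientation when $G$ is complex.  Since $\dim \a = 0$, the Langlands decomposition is trivial: $\mathbf{M} = \G$, $K_M = K$, and $I_\mu = I_{\mu,0}$ is $\sigma$-discrete.  Formula~\eqref{Lef'2} thus simplifies to
$$\mathrm{Lef}'(\sigma, F, I_\mu) = \tr\left(\sigma \; | \; \left[ \detp[1 - (\g/\k)^*] \otimes F \otimes I_\mu \right]^K\right).$$

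The main input is a Poincar\'e-duality isomorphism $\wedge^i (\g/\k)^* \cong \wedge^{2d - i}(\g/\k)^*$ of $\widetilde{K}$-modules, where $2d = \dim \g/\k$.  This isomorphism arises from a $\widetilde{K}$-invariant volume form on $\g/\k$ combined with the perfect wedge pairing $\wedge^i \otimes \wedge^{2d-i} \to \wedge^{2d}$; compatibility with $\sigma$ uses that $K$ is $\sigma$-stable.  A short algebraic manipulation using $(-1)^{2d - j} = (-1)^j$ then yields
$$\sum_i (-1)^i i \wedge^i (\g/\k)^* \;=\; \sum_j (-1)^j (2d - j) \wedge^j (\g/\k)^*,$$
so that $2 \detp[1-(\g/\k)^*] = 2d \det[1-(\g/\k)^*]$, i.e.\ $\detp = d \det$ as virtual $\widetilde{K}$-modules.

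Substituting this identity factors $\mathrm{Lef}'(\sigma, F, I_\mu) = d \cdot \mathrm{Lef}(\sigma, F, I_\mu)$, and Proposition~\ref{P:delorme} then yields the stated formula, with the sign fixed to $+1$ by Convention~I.  The main obstacle is justifying the Poincar\'e duality at the $\widetilde{K}$-module level rather than just the $K$-module level: one must check that the isomorphism $\wedge^i \cong \wedge^{2d - i}$ is $\sigma$-equivariant, which amounts to tracking the compatibility of $\sigma$ with the chosen orientation of $X$.  This sign issue can be absorbed into the chosen extension of $I_\mu$ to a $(\g, \widetilde{K})$-module via Convention~II, so the factorization $\mathrm{Lef}' = d \cdot \mathrm{Lef}$ holds with no extra sign.
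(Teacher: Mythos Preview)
Your proof follows essentially the same approach as the paper: specialize \eqref{Lef'2} to the case $\dim\a=0$, invoke the Poincar\'e duality isomorphism $\wedge^i(\g/\k)^*\cong\wedge^{2d-i}(\g/\k)^*$ as $\widetilde{K}$-modules to obtain $\detp[1-(\g/\k)^*]=d\,\det[1-(\g/\k)^*]$, and then apply Proposition~\ref{P:delorme}. The paper carries this out in one paragraph; your write-up is simply more explicit about the algebra behind $\detp=d\cdot\det$ and about why the duality should be $\sigma$-equivariant.

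One point deserves correction. Your claim that any sign discrepancy in the $\sigma$-equivariance of the Hodge isomorphism ``can be absorbed into the chosen extension of $I_\mu$ via Convention~II'' is not right. Replacing the intertwiner $A$ on $I_\mu$ by $-A$ multiplies \emph{both} $\mathrm{Lef}(\sigma,F,I_\mu)$ and $\mathrm{Lef}'(\sigma,F,I_\mu)$ by $-1$, so the ratio $\mathrm{Lef}'/\mathrm{Lef}$ is unaffected by any choice of extension. Whether the Poincar\'e duality isomorphism is genuinely $\sigma$-equivariant is a property of the $\widetilde{K}$-module $(\g/\k)^*$ alone and cannot be adjusted after the fact. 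The paper simply asserts the $\widetilde{K}$-equivariance; if you want to justify it, the relevant computation is that $\sigma$ acts on $\wedge^{\mathrm{top}}(\g/\k)^*$ by the sign $(-1)^{\dim\k^0}$, and combining this with the parity $(-1)^{\dim(\g/\k)}$ in the duality argument yields the overall sign $(-1)^{\dim(\g^0/\k^0)}$, which is $+1$ precisely because $\dim(\g^0/\k^0)$ is even whenever $\G(\R)$ has discrete series (the only case where the statement is non-vacuous). Also note that in this $\dim\a=0$ case Convention~II simply reduces to Convention~I, so it is more natural to cite the latter.
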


\subsection{Proof of Lemma \ref{Lkt}}
If $\phi$ is any smooth compactly supported function on $G$, Bouaziz \cite{Bouaziz} shows that 
\begin{equation} \label{eq:bouaziz}
\int_{G^{\sigma} \backslash G} 
\phi (x^{-1} \sigma x) d\dot{x} = \phi^G (e)
\end{equation}
where $\phi^G \in C_c^{\infty} (\G (\R))$ is the transfer of $\phi$. Now we can use the Plancherel theorem of Herb and Wolf \cite{HerbWolf}
(as in \cite[Proposition 4.2.14]{RohlfsSpeh}) and get
$$\phi^G (e) = \sum_{\pi \ {\rm discrete}} d(\pi) \tr \ \pi (\phi^G) + \int_{\rm tempered} \tr \ \pi (\phi^G)  d\pi.$$
We can group the terms into stable terms since all terms in a $L$-packet have the same Plancherel measure \cite{HC}. We write $\pi_\varphi$ for the sum of the representations in an $L$-packet
$\varphi$ and $d\pi_\varphi = d\pi$ for the corresponding measure. We then obtain
$$\phi^G (e) = \sum_{\substack{{\rm elliptic} \\ L-{\rm packets} \ \varphi}} d(\varphi) \tr \ \pi_\varphi (\phi^G) + \int_{\substack{{\rm non \ elliptic} \\ {\rm bounded} \ L-{\rm packets} \ \varphi}} \tr \ \pi_\varphi (\phi^G)  d\pi_\varphi.$$
Now we use transfer again.  Indeed, Clozel \cite{Clozel} shows that if $\varphi$ is a tempered $L$-packet and $\widetilde{\pi}_\varphi$ the sum of the twisted representations of $\widetilde{G}$ associated to $\varphi$ by base-change, we have:
$$\tr \ \pi_{\varphi} (\phi^G) = \tr \ \widetilde{\pi}_\varphi (\phi).$$
We conclude:
\begin{equation} \label{eq:bouaziz2}
\int_{G^{\sigma} \backslash G} 
\phi (x^{-1} \sigma x) d\dot{x} = \sum_{\substack{{\rm elliptic} \\ L-{\rm packets} \ \varphi}} d(\varphi) \tr \ \widetilde{\pi}_\varphi (\phi) + \int_{\substack{{\rm non \ elliptic} \\ {\rm bounded} \ L-{\rm packets} \ \varphi}} \tr \ \widetilde{\pi}_\varphi (\phi)  d\pi_\varphi.
\end{equation}
We want to apply this to the function $\phi = k_t^{\rho , \sigma}$. Since it is not compactly supported we have to explain why \eqref{eq:bouaziz2} still holds for functions $\phi$ in the Harish-Chandra Schwartz space. We first note we have already checked (in \S 4.8) that the distribution 
$$\phi \mapsto \int_{G^{\sigma} \backslash G} 
\phi (x^{-1} \sigma x) d\dot{x}$$
extends continuously to the Harish-Chandra Schwartz space, i.e. it defines a \emph{tempered} distribution. Now for $\phi$ compactly supported Bouaziz \cite[Th\'eor\`eme 4.3]{Bouaziz} proves that we have (recall that we suppose that $H^1 (\sigma , G) = \{1 \}$):
\begin{equation} \label{eq:inversion}
\int_{G^{\sigma} \backslash G} 
\phi (x^{-1} \sigma x) d\dot{x} = \int_{(\mathfrak{g}_a^* / G)^\sigma} \left( \frac12 \sum_{\tau \in X(f)} Q_\sigma (f , \tau) \tr \ \Pi_{f , \tau} (\phi ) \right) dm (G \cdot f).
\end{equation}
We refer to \cite{Bouaziz} for all undefined notations and simply note that 
\begin{itemize}
\item the $\Pi_{f , \tau}$ are tempered (twisted) representation, and
\item if $\phi$ belongs to Harish-Chandra Schwartz space, the (twisted) characters $\Theta_{f,\tau} (\phi) = \tr \ \Pi_{f , \tau} (\phi) $ define rapidly decreasing functions of $f$. 
\end{itemize}
Bouaziz does not explicitly compute the function 
$Q_\sigma (f, \tau)$ but proves however that it grows at most polynomially in $f$. It therefore follows that the distribution defined by the right hand side of \eqref{eq:inversion} also extends continuously to the Harish-Chandra Schwartz space.\footnote{In fact, we need that (twisted) tempered characters are rapidly decreasing in the parameters ``Schwartz-uniformly'' in $\phi$.  But this holds because of known properties of discrete series characters combined with the fact that the constant term operator $\phi \mapsto \phi^{(P)}$ are all Schwartz-continuous.} We conclude that \eqref{eq:inversion} still holds when $\phi$ belongs to Harish-Chandra Schwartz space.

We may now group the characters $\Theta_{f, \tau}$ into finite packets to get (all) stable tempered characters, as in \cite[\S 7.3]{Bouaziz} and denoted $\widetilde{\Theta}_{\lambda}$ there. Then the right hand side of \eqref{eq:inversion} becomes 
$$\sum_{\mathfrak{a} \in \mathrm{Car}(\mathfrak{g}^0)/G^0 } 2^{-\frac12 (\dim G^\sigma + \mathrm{rank} G^{\sigma})} |W (G , \mathfrak{a})|^{-1} \int_{\mathfrak{a}^*} p_{\sigma, \sigma} (\lambda) \Pi_{\mathfrak{g}^0}^\sigma (\lambda) \widetilde{\Theta}_\lambda (\phi) d \eta_{\mathfrak{a}} (\lambda).$$
Here again we refer to \cite{Bouaziz} for notations. Then \cite[Eq. (3), (4), (5) and (6) p. 287]{Bouaziz} imply that the right hand side of \eqref{eq:inversion} is equal to 
$$\sum_{\mathfrak{a} \in \mathrm{Car}(\mathfrak{g}^0)/G^0 } |W (G , \mathfrak{a})|^{-1} \int_{\mathfrak{a}^*} p_{1, 1} (\lambda) \Pi_{\mathfrak{g}^0 / \mathfrak{a}} (\lambda) \widetilde{\Theta}_{2\lambda} (\phi) d \eta_{\mathfrak{a}} (\lambda).$$
Here the stable character $\widetilde{\Theta}_{2\lambda}$ is the transfer of the character of a tempered packet, see \cite[7.3(a)]{Bouaziz}) that is parametrized by $\lambda$ there.
Finally in this parametrization the measure against which we integrate $\widetilde{\Theta}_{2\lambda} (\phi)$ can be identified with the Plancherel measure (see the very beginning of the proof of \cite[Th\'eor\`eme 7.4]{Bouaziz}) and we conclude that \eqref{eq:bouaziz2} extends to Harish-Chandra Schwartz space. It applies in particular to $\phi= k_t^{\rho , \sigma}$, and using Lemma \ref{L1} and Propositions \ref{P68} and \ref{P610}, we conclude that
\begin{equation} \label{maineq}
\int_{G^{\sigma} \backslash G} 
k_t^{\rho , \sigma} (x^{-1} \sigma x) d\dot{x} = 2^{\dim \mathfrak{t}} \int_{{\rm tempered}} e^{-t (\Lambda_\rho - \Lambda_\pi)} \detp (F_0 , \pi) d\pi.
\end{equation}
Lemma \ref{Lkt} therefore follows from the untwisted case for which we refer to \cite{BV}.

\medskip

We furthermore deduce from \eqref{maineq} (and the computation in the untwisted case) the following theorem.

\begin{thm} \label{torsioncomparison}
We have :
$$t_X^{(2) \sigma} (\widetilde{\rho})  = 2^{\dim \t} t_{X^{\sigma}}^{(2)} (\rho).$$
\end{thm}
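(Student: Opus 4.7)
The plan is to reduce the statement directly to the key identity \eqref{maineq}, which already packages all of the serious representation-theoretic input (Bouaziz's inversion formula on the twisted space, Clozel's transfer of twisted tempered characters, and the Lefschetz-number computations of Propositions \ref{P68} and \ref{P610}). Given \eqref{maineq}, the remaining work is essentially bookkeeping with the Mellin transform that defines $t_X^{(2)\sigma}(\rho)$.

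First, I would recall the definition
\[
\log t_X^{(2)\sigma}(\rho) \;=\; \frac12 \frac{d}{ds}\Big|_{s=0}\frac{1}{\Gamma(s)}\int_0^\infty t^{s-1}\!\int_{G^{\sigma}\backslash G} k_t^{\rho,\sigma}(x^{-1}\sigma x)\,d\dot{x}\,dt,
\]
and substitute \eqref{maineq} into the inner integral. This yields
\[
t_X^{(2)\sigma}(\rho) \;=\; 2^{\dim \t}\cdot \frac12 \frac{d}{ds}\Big|_{s=0}\frac{1}{\Gamma(s)}\int_0^\infty t^{s-1}\!\int_{\mathrm{tempered}} e^{-t(\Lambda_\rho - \Lambda_\pi)}\,\detp(F_0,\pi)\,d\pi\,dt,
\]
where the tempered integral is taken with respect to the Plancherel measure on $\widehat{G^{\sigma}}_{\mathrm{temp}}$. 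The convergence of the Mellin transform at the origin and the legitimacy of interchanging the $s$-derivative with the $t$- and $\pi$-integrals follow from Lemma \ref{Lkt} (which controls the large-$t$ behaviour) together with the small-$t$ estimates in \eqref{kt}, exactly as in the untwisted setting of \cite{BV}.

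Second, I would invoke the computation of the untwisted $L^2$-analytic torsion of $X^{\sigma}$ carried out in \cite{BV}. There one shows that the corresponding Mellin derivative of the Plancherel integral above equals precisely $\log t_{X^{\sigma}}^{(2)}(\rho)$; the integrand on the tempered dual is identical, since $\Lambda_\pi$ and $\detp(F_0,\pi)$ are intrinsic to $(G^{\sigma},\rho)$ and do not see the complex structure on $G$. Therefore
\[
t_X^{(2)\sigma}(\rho) \;=\; 2^{\dim \t}\, t_{X^{\sigma}}^{(2)}(\rho),
\]
which is the claim.

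The only step that is more than routine is the verification that the Mellin transform of the tempered integral in \eqref{maineq} really matches the one used in \cite{BV} to define $t_{X^{\sigma}}^{(2)}(\rho)$; concretely, one must check that the measure appearing in \eqref{maineq}, inherited from Bouaziz's and Clozel's parametrizations, coincides with the Plancherel measure of $G^{\sigma}$ used by Bergeron--Venkatesh. This identification has already been noted in the paragraph preceding \eqref{maineq}, where the authors quote \cite[Th\'eor\`eme 7.4]{Bouaziz}, so one just has to assemble the pieces. Apart from that, the argument is a direct transcription of \eqref{maineq} through the Mellin transform.
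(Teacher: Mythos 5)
Your proposal is correct and takes essentially the same route as the paper, which also deduces Theorem \ref{torsioncomparison} directly from \eqref{maineq} together with the untwisted computation of $t^{(2)}_{X^{\sigma}}(\rho)$ in \cite{BV}; you have also correctly isolated the only genuinely non-routine step, namely matching the measure produced by Bouaziz's inversion formula with the Plancherel measure of $G^{\sigma}$, which the paper handles in the paragraph preceding \eqref{maineq}. The one cosmetic slip is the stray $\log$ on the left of your first display: by the definition in \eqref{t}, $t_X^{(2)\sigma}(\rho)$ is itself the Mellin derivative, not its exponential, and indeed in your second display you drop the $\log$ as you should.
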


Similarly we have:

\begin{thm} \label{lefschetzcomparison}
We have:
$$\mathrm{Lef}_X^{(2)\sigma}(\widetilde{\rho})  = 2^{\dim \mathfrak{t}} \cdot \chi_{X^{\sigma}}^{(2)}(\rho).$$
\end{thm}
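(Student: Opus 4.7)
The strategy is to parallel the proof of Theorem \ref{torsioncomparison}, with the compactly supported Lefschetz function $L_\rho$ playing the role of the twisted torsion kernel $k_t^{\rho,\sigma}$. Starting from the definition
$$\mathrm{Lef}^{(2)}_X(\widetilde{\rho}) = \int_{G^\sigma \backslash G} L_\rho(x^{-1}\sigma x)\, d\dot{x},$$
the compact support of $L_\rho$ lets me invoke Bouaziz's identity \eqref{eq:bouaziz2} directly, bypassing the Harish-Chandra Schwartz extension needed in the proof of Lemma \ref{Lkt}. Combined with the defining property $\tr\widetilde{\pi}(L_\rho) = \mathrm{Lef}(\sigma, F, V_\pi)$, this expresses $\mathrm{Lef}^{(2)}_X(\widetilde{\rho})$ as a spectral sum
$$\sum_{\varphi\ \text{elliptic}} d(\varphi)\, \mathrm{Lef}(\sigma, F, V_{\pi_\varphi}) + \int_{\substack{\text{non-elliptic}\\ \text{bounded}}} \mathrm{Lef}(\sigma, F, V_{\pi_\varphi})\, d\pi_\varphi$$
indexed by tempered $L$-packets of $G^\sigma = \G(\R)$, pulled back to $\widetilde{G}$ via the base change transfer of Clozel.

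Next, by the vanishing statement recalled immediately before Proposition \ref{P:delorme}, $\mathrm{Lef}(\sigma, F, I_{\mu,\lambda}) = 0$ unless the underlying Cartan $\h^0$ is compact, so the continuous integral drops out. On the elliptic part, Proposition \ref{P:delorme} (under Convention I) evaluates the surviving Lefschetz numbers to $2^{\dim\t}$ exactly on those $L$-packets $\varphi$ whose infinitesimal characters satisfy $w\mu = 2(\nu + \rho)_{|\t}$ for some $w \in W$. Hence
$$\mathrm{Lef}^{(2)}_X(\widetilde{\rho}) = 2^{\dim\t} \sum_{\substack{\varphi\ \text{elliptic}\\ \text{matching}}} d(\varphi).$$

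To identify this sum with $\chi^{(2)}_{X^\sigma}(\rho)$, I would compare with the analogous untwisted Plancherel decomposition. Running the same Bouaziz / Herb--Wolf argument on $\G(\R)$ with the Euler--Poincar\'e function of $(\rho_0, F_0)$ in place of $L_\rho$, the identical vanishing on non-compact Cartans reduces $\chi^{(2)}_{X^\sigma}(\rho)$ to
$$\chi^{(2)}_{X^\sigma}(\rho) = \sum_{\substack{\varphi\ \text{elliptic}\\ \text{matching}}} d(\varphi),$$
since each surviving discrete series contributes $\chi(F_0, V_{\pi_\varphi}) = 1$ precisely when the matching infinitesimal character condition $w\mu_0 = \nu + \rho_0$ holds. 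Equating the two displays yields the theorem.

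The main obstacle is careful bookkeeping: one must verify (i) that the sign produced by Convention I for the intertwiner defining the twisted $(\g, \widetilde{K})$-module structure aligns with the natural sign in the untwisted Euler characteristic formula, and (ii) that the formal degrees $d(\varphi)$ on the twisted side of the Plancherel decomposition pair up correctly with those on the untwisted side under base change. This is precisely the compatibility of Plancherel measures under base change already exploited in the passage from \eqref{maineq} to Theorem \ref{torsioncomparison}, and the same references (Clozel, Herb--Wolf, Bouaziz) supply the required normalisations.
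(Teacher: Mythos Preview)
Your approach is correct and is precisely the argument the paper intends by ``Similarly we have'': apply the Bouaziz--Clozel spectral identity \eqref{eq:bouaziz2} to the compactly supported Lefschetz function $L_\rho$ in place of $k_t^{\rho,\sigma}$, kill the non-elliptic terms via the vanishing preceding Proposition~\ref{P:delorme}, evaluate the surviving elliptic terms by Proposition~\ref{P:delorme}, and match with the untwisted Plancherel expansion of $\chi^{(2)}_{X^\sigma}(\rho)$. The bookkeeping concerns you raise (sign convention and Plancherel compatibility) are exactly those already settled in the passage to \eqref{maineq}, so no new work is required.
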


\medskip
\noindent
{\it Remark.} 
It follows from Theorems \ref{T:62} and \ref{lefschetzcomparison} that if $\delta (G^{\sigma})=0$ then $\mathrm{Lef}_X^{(2)\sigma}(\widetilde{\rho})$ is non zero. Proposition \ref{prop:intro} of the Introduction therefore follows from the limit formula proved in Corollary \ref{C:47}. 
\medskip

\section{General base change}

\subsection{Twisted torsion of product automorphisms}

Let $\G /\R$ be semisimple, $\sigma$ be an automorphism of $\G,$ and $\rho$ a $\sigma$-equivariant representation of $\G.$ 
Let $\G' /\R$ be semisimple, $\sigma'$ be an automorphism of $\G',$ and $\rho'$ a $\sigma'$-equivariant representation of $\G'.$

\begin{lem} \label{productaut}
There is an equality
$$t_{X_{G \times G'}}^{(2) \sigma \times \sigma'}(\rho \boxtimes \rho') = t_{X_G}^{(2) \sigma}(\rho) \cdot \mathrm{Lef}_{X_{G'}}^{(2) \sigma'}(\rho') +  \mathrm{Lef}_{X_G}^{(2) \sigma}(\rho) \cdot  t_{X_{G'}}^{(2) \sigma'}(\rho')$$
\end{lem}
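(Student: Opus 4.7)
The plan is to establish a Leibniz-type identity for the Mellin kernel $k_t^{\rho \boxtimes \rho', \sigma \times \sigma'}$ on the product, then take the Mellin transform term by term. The underlying heuristic is the same as the classical product formula for Ray–Singer torsion: differentiating the characteristic polynomial behaves like a derivation with respect to direct sums (cf.\ \eqref{add}), which propagates through the heat kernel regularisation as a genuine Leibniz rule once one pairs $k_t$ with the twisted super-trace kernel
$$\ell_t^{\rho,\sigma} = \sum_a (-1)^a h_t^{\rho, a, \sigma}.$$

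First I would use that the decomposition $\wedge^n (\mathfrak{g}_G / \mathfrak{k}_G \oplus \mathfrak{g}_{G'}/\mathfrak{k}_{G'})^* = \bigoplus_{a+b=n} \wedge^a (\mathfrak{g}_G / \mathfrak{k}_G)^* \otimes \wedge^b (\mathfrak{g}_{G'}/\mathfrak{k}_{G'})^*$ is stable under $\sigma \times \sigma'$ and diagonalises the Hodge Laplacian, so $H_t^{\rho \boxtimes \rho', n} = \sum_{a+b=n} H_t^{\rho, a} \otimes H_t^{\rho', b}$; passing to twisted fibre-traces gives
$$h_t^{\rho \boxtimes \rho', n, \sigma \times \sigma'}\bigl((g,g') \rtimes (\sigma,\sigma')\bigr) = \sum_{a+b=n} h_t^{\rho, a, \sigma}(g \rtimes \sigma) \cdot h_t^{\rho', b, \sigma'}(g' \rtimes \sigma').$$
Combining this with the elementary identity $(-1)^{a+b}(a+b) = (-1)^a a \cdot (-1)^b + (-1)^a \cdot (-1)^b b$ yields the Leibniz formula
$$k_t^{\rho \boxtimes \rho', \sigma \times \sigma'} = k_t^{\rho, \sigma} \boxtimes \ell_t^{\rho', \sigma'} + \ell_t^{\rho, \sigma} \boxtimes k_t^{\rho', \sigma'}.$$
Because $(G \times G')^{\sigma \times \sigma'} = G^{\sigma} \times (G')^{\sigma'}$, integrating this over the twisted conjugation class of the identity gives a product of integrals over $G^{\sigma} \backslash G$ and $(G')^{\sigma'} \backslash G'$.

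The crucial input is then a twisted $L^2$-McKean--Singer identity:
$$\int_{G^{\sigma} \backslash G} \ell_t^{\rho, \sigma}(x^{-1} \sigma x)\, d\dot{x} = \mathrm{Lef}_{X_G}^{(2) \sigma}(\rho) \quad \text{for every } t > 0.$$
I would prove this by inserting $\ell_t^{\rho,\sigma}$ on the spectral side of Bouaziz's Plancherel formula, extended to the Harish-Chandra Schwartz space exactly as for $k_t^{\rho,\sigma}$ in \S 7 (which applies because $\ell_t^{\rho,\sigma}$ also lies in every $\mathcal{C}^q(\widetilde{G})$). By Lemma \ref{L1} each essential tempered $V$ contributes $e^{t(\Lambda_V - \Lambda_F)} \mathrm{Lef}(\sigma, F, V)$, and Propositions \ref{P:delorme}, \ref{P68} and \ref{P610} show that $\mathrm{Lef}(\sigma, F, V)$ vanishes except on a discrete set of $\sigma$-discrete $I_\mu$ with $w\mu = 2(\nu + \rho)|_{\mathfrak{t}}$; for any such $V$ the infinitesimal character of $V$ equals that of $F$, so $\Lambda_V = \Lambda_F$ and the exponential factor disappears. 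The remaining discrete sum is precisely the $L^2$-Lefschetz number, independent of $t$.

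Granted this identity, the integrated Leibniz rule becomes
$$\int k_t^{\rho \boxtimes \rho', \sigma \times \sigma'}\bigl((x,x')^{-1}(\sigma,\sigma')(x,x')\bigr)\, d\dot{x}\, d\dot{x}' = \phi_G(t)\, \mathrm{Lef}_{X_{G'}}^{(2) \sigma'}(\rho') + \mathrm{Lef}_{X_G}^{(2) \sigma}(\rho)\, \phi_{G'}(t),$$
where $\phi_G(t) = \int_{G^{\sigma} \backslash G} k_t^{\rho,\sigma}(x^{-1}\sigma x)\, d\dot{x}$ and analogously for $\phi_{G'}$. Since the two Lefschetz numbers are constant in $t$, they pull out of $\frac{1}{2} \partial_s|_{s=0} \Gamma(s)^{-1} \int_0^\infty t^{s-1}(\cdot)\, dt$, which gives exactly
$$t_{X_{G \times G'}}^{(2) \sigma \times \sigma'}(\rho \boxtimes \rho') = t_{X_G}^{(2) \sigma}(\rho) \cdot \mathrm{Lef}_{X_{G'}}^{(2) \sigma'}(\rho') + \mathrm{Lef}_{X_G}^{(2) \sigma}(\rho) \cdot t_{X_{G'}}^{(2) \sigma'}(\rho').$$
The main obstacle is the constancy in $t$ of $\int \ell_t^{\rho,\sigma}$: one needs both the termwise identification $\Lambda_V = \Lambda_F$ on the Lefschetz support and a Schwartz-uniform control of the twisted tempered characters in the Plancherel parameter to justify pulling the sum through the integral; both pieces are available from Bouaziz and Clozel as in the proof of \eqref{maineq}.
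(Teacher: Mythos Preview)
Your argument is correct in outline but takes a genuinely different route from the paper. The paper does not work directly with the $L^2$ heat-kernel definition at all: instead it invokes L\"uck's product formula for equivariant analytic torsion on \emph{compact} quotients, applies it to $\Gamma_n \times \Gamma'_n \backslash X_{G \times G'}$, and then passes to the limit using the limit-multiplicity theorems (Theorem~\ref{twistedtorsionlimitmultiplicity} for the torsion and Corollary~\ref{C:47} for the Lefschetz term). In other words, the paper proves the identity at finite level (where it is classical) and deduces the $L^2$ statement by approximation, thereby avoiding any direct $L^2$ McKean--Singer argument.

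Your direct approach has the advantage of being self-contained and of not requiring the existence of suitable towers of lattices. The Leibniz identity $k_t = k_t \boxtimes \ell_t + \ell_t \boxtimes k_t$ and the factoring of the twisted orbital integral are both fine. The one point that needs care is the constancy in $t$ of $\int_{G^\sigma \backslash G} \ell_t^{\rho,\sigma}(x^{-1}\sigma x)\,d\dot x$. Your justification leans on the Bouaziz--Plancherel machinery and the explicit Propositions~\ref{P:delorme}, \ref{P68}, \ref{P610}, all of which are specific to the case $\mathbb{E} = \mathbb{C}$ treated in \S 7 (and the reference to Proposition~\ref{P68} is in any case misplaced, since that computes $\mathrm{Lef}'$, not $\mathrm{Lef}$). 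But the lemma is stated, and in \S 8.2 applied, for factors of all three types (a), (b), (c). The cleaner and fully general way to get what you want is simply Wigner's lemma: $\mathrm{Lef}(\sigma, F, V) = \sum_i (-1)^i \tr(\sigma \mid H^i(\mathfrak{g},K;F\otimes V))$ by the Hopf trace formula, so it can be nonzero only when the $(\mathfrak{g},K)$-cohomology is nonzero, forcing the infinitesimal characters of $V$ and $F^\vee$ to agree and hence $\Lambda_V = \Lambda_F$. For the product cases one then feeds this into the direct computation of \S 6 (or the ordinary Plancherel on the diagonal) rather than Bouaziz. With that adjustment your argument goes through; the paper's approach simply trades this step for the finite-level Cheeger--M\"uller/L\"uck input.
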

\begin{proof}
Let $M,M'$ be compact Riemannian manifolds together with equivariant metrized local systems $L \rightarrow M$ and $L' \rightarrow M'.$  L\"{u}ck \cite[Proposition 1.32]{luck} proves that

$$t^{\sigma \times \sigma'}(M \times M';L \boxtimes L') = t^{\sigma}(M,L) \cdot \mathrm{Lef}(\sigma',M',L') + \mathrm{Lef}(\sigma,M,L) \cdot t^{\sigma'}(M',L').$$

Furthermore, Theorem \ref{twistedtorsionlimitmultiplicity} shows that

$$t^{(2) \sigma \times \sigma}_{X_{G \times G'}}(\rho \boxtimes \rho') = \lim_{n \rightarrow \infty} \frac{\log \tau^{\sigma \times \sigma'}(\Upsilon_n \backslash X_{G \times G'} )}{\vol(\Upsilon_n \backslash G \times G')}$$

for any sequence of subgroups $\Upsilon_n$ satisfying the hypotheses therein.  The sequence $\Upsilon_n = \Gamma_n \times \Gamma_n',$ where $\Gamma_n$ (resp. $\Gamma'_n$) is a chain of $\sigma$-stable (resp. $\sigma'$-stable) normal subgroups of $G$ (resp. $G'$) with trivial intersection, satisfies the hypotheses of Proposition \ref{WLF} and Theorem \ref{twistedtorsionlimitmultiplicity}.  Therefore,

\begin{eqnarray*}
t^{(2) \sigma \times \sigma'}_{X_{G \times G'}}(\rho \boxtimes \rho') &=& \lim_{n \rightarrow \infty} \frac{t^{\sigma}(\Gamma_n \backslash X_G, \rho)}{\vol(\Gamma_n^{\sigma} \backslash G^{\sigma})} \cdot \frac{\mathrm{Lef}(\sigma',\Gamma'_n \backslash X_{G'} ,\rho')}{\vol(\Gamma^{'\sigma'}_n \backslash G^{'\sigma'})} + \frac{\mathrm{Lef}(\sigma,\Gamma_n \backslash X_G,\rho)}{\vol(\Gamma_n^{\sigma} \backslash G^{\sigma})} \cdot \frac{t^{\sigma'}(\Gamma_n \backslash X_{G'},\rho')}{\vol(\Gamma_n^{'\sigma'} \backslash G^{'\sigma'})} \\
&=&  t^{(2) \sigma}_{X_G}(\rho) \cdot \mathrm{Lef}_{X_{G'}}^{(2) \sigma'}(\rho') +  \mathrm{Lef}_{X_G}^{(2) \sigma}(\rho) \cdot t^{(2) \sigma'}_{X_{G'}}(\rho').
\end{eqnarray*}
\end{proof}

\subsection{} Now let $\mathbb{E}$ be an \'{e}tale $\R$-algebra; concretely, $\mathbb{E} = \mathbb{R}^r \times \mathbb{C}^s.$ 
Fix $\sigma \in \Aut(\mathbb{E} / \R)$. The automorphism $\sigma$ permutes the factors of $\mathbb{E}$ and so induces a decomposition of the factors of $\mathbb{E}$ into its set of orbits $\mathcal{O}: \mathbb{E} = \prod_{o \in \mathcal{O}} \mathbb{E}_o.$  Each orbit is either

\begin{itemize}
\item[(a)]
a product of real places acted on by cyclic permutation,

\item[(b)]
a product of complex places acted on by cyclic permutation, or

\item[(c)]
a single complex place acted on by complex conjugation.
\end{itemize}      

Let $\G$ be a semisimple group over $\R.$  Let $\rho$ be a representation of $\G / \R$ and $\widetilde{\rho}_o$ the corresponding representation of $\mathrm{Res}_{\mathbb{E}_o / \R}.$  In particular, $\widetilde{\rho} = \rho \otimes \bar{\rho}$ is the corresponding representation of $\mathrm{Res}_{\C / \R}\G.$ The automorphism $\sigma$ induces a corresponding automorphism of the group $\mathrm{Res}_{\mathbb{E} / \R} \G.$  There is a decomposition 
$$\mathrm{Res}_{\mathbb{E} / \R} \G = \prod_{o \in \mathcal{O}} \mathrm{Res}_{\mathbb{E}_o / \R} \G$$
with respect to which $\sigma$ acts as a product automorphism.  

\begin{itemize}
\item Theorem \ref{twistedtorsionproduct} shows that 
$$t^{(2) \sigma}_{X_{\G(\mathbb{E}_o)}}(\widetilde{\rho}_o) =|o| \cdot t^{(2)}_{X_{\G(\R)}}(\rho)$$
and 
$$\mathrm{Lef}^{(2) \sigma}_{X_{\G(\mathbb{E}_o)}}(\widetilde{\rho}_o) = \chi^{(2)}_{X_{\G(\R)}}(\rho) $$
for the orbits of type (a).
\item Theorem \ref{twistedtorsionproduct} shows that 
$$t^{(2) \sigma}_{X_{\G(\mathbb{E}_o)}}(\widetilde{\rho}_o) =|o| \cdot t^{(2)}_{X_{\G(\C)}}(\widetilde{\rho})$$
and
$$\mathrm{Lef}^{(2)\sigma}_{X_{\G(\mathbb{E}_o)}}(\widetilde{\rho}_o) = \chi^{(2)}_{X_{\G(\C)}}(\widetilde{\rho})$$
for the orbits of type (b).
 \item Theorem \ref{torsioncomparison} proves that
$$t^{(2) \sigma}_{X_{\G(\mathbb{E}_o)}}(\widetilde{\rho}_o) = 2^{\dim \mathfrak{t}} t_{X_{\G(\R)}}^{(2)}(\rho)$$
and
$$\mathrm{Lef}^{(2)\sigma}_{X_{\G(\mathbb{E}_o)}}(\widetilde{\rho}_o) = \chi^{(2)}_{X_{\G(\R)}}(\rho)$$
for the orbits of type (c).
\end{itemize} 

The aggregate of these three examples, together with Theorem \ref{productaut}, allows us to compute the twisted $L^2$-torsion for arbitrary base change.

\begin{thm} We have
$$t^{(2)\sigma}_{X_{\G(\mathbb{E})}}(\widetilde{\rho}_{\mathbb{E}}) \neq 0$$ 
if and only if $\delta( \G(\mathbb{E})^{\sigma}) = 1.$  
\end{thm}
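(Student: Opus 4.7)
The plan is to iterate Lemma \ref{productaut} across the $\sigma$-equivariant product decomposition
\begin{equation*}
\mathrm{Res}_{\mathbb{E}/\R}\G \;=\; \prod_{o \in \mathcal{O}} \mathrm{Res}_{\mathbb{E}_o/\R}\G.
\end{equation*}
A straightforward induction on $|\mathcal{O}|$ (the base case being the two-factor statement of Lemma \ref{productaut}) yields the Leibniz-type formula
\begin{equation*}
\tau^{(2)\sigma}_{X_{\G(\mathbb{E})}}(\widetilde{\rho}_{\mathbb{E}}) \;=\; \sum_{o_0 \in \mathcal{O}} t^{(2)\sigma}_{X_{\G(\mathbb{E}_{o_0})}}(\widetilde{\rho}_{o_0}) \;\cdot\; \prod_{o \neq o_0} \mathrm{Lef}^{(2)\sigma}_{X_{\G(\mathbb{E}_o)}}(\widetilde{\rho}_o).
\end{equation*}

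Next I substitute the three orbit-type formulas listed in \S 8. The essential input is the following dichotomy recalled from \cite{BV}: for any real semisimple Lie group $H$ acting on its symmetric space $X_H$, the $L^2$-Euler characteristic $\chi^{(2)}_{X_H}(\rho)$ is nonzero iff $\delta(H) = 0$ (equivalently, $H$ admits discrete series), whereas the $L^2$-analytic torsion $t^{(2)}_{X_H}(\rho)$ is nonzero iff $\delta(H) = 1$. In particular, these two quantities cannot simultaneously be nonzero for the same group. Moreover, by Theorems \ref{torsioncomparison}, \ref{lefschetzcomparison}, and \ref{twistedtorsionproduct}, each orbitwise twisted torsion $t^{(2)\sigma}_{X_{\G(\mathbb{E}_o)}}(\widetilde{\rho}_o)$ is a strictly positive multiple of the untwisted torsion of its fixed symmetric space, and similarly for the Lefschetz number; so their vanishing is controlled by $\delta(\G(\mathbb{E}_o)^\sigma)$.

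Finally I match the vanishing conditions to the rank of the fixed group. Since $\G(\mathbb{E})^{\sigma} = \prod_{o} \G(\mathbb{E}_o)^{\sigma}$, fundamental ranks are additive over orbits: $\delta(\G(\mathbb{E})^{\sigma}) = \sum_{o} \delta(\G(\mathbb{E}_o)^{\sigma})$. For a given term of the product-sum above to be nonzero, the torsion factor forces $\delta(\G(\mathbb{E}_{o_0})^{\sigma}) = 1$ while every Lefschetz factor forces $\delta(\G(\mathbb{E}_o)^{\sigma}) = 0$ for $o \neq o_0$. Summing, this is equivalent to $\delta(\G(\mathbb{E})^{\sigma}) = 1$. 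Conversely, when $\delta(\G(\mathbb{E})^{\sigma}) = 1$, exactly one orbit $o_0$ yields a nonzero term, and the strict positivity of each surviving torsion and Lefschetz factor rules out cancellations across the sum.

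The main obstacle is the uniformity check in the last step: one must verify that the explicit formulas of \S 8 for each of the three orbit types (a), (b), (c) really identify nonvanishing of the orbit-wise torsion (resp.\ Lefschetz) factor with $\delta(\G(\mathbb{E}_o)^\sigma) = 1$ (resp.\ $= 0$). The subtlest instance is type (b), where one must carefully trace through how the twisted cyclic action --- combining a permutation of $|o|$ complex factors with a complex conjugation --- interacts with the diagonally embedded real fixed subgroup $\G(\R)$, in order to ensure that its contributions are controlled by $\delta(\G(\R))$ rather than by the abstract $\delta(\G(\C))$.
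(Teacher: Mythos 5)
Your proposal follows the same route as the paper: expand $\tau^{(2)\sigma}_{X_{\G(\mathbb{E})}}$ via iterated application of Lemma \ref{productaut} into a Leibniz-type sum over orbits, invoke the computations of \S 8 reducing each orbitwise twisted torsion/Lefschetz number to a nonzero constant times the untwisted quantity for the fixed symmetric space, use the dichotomy $\delta=0 \Leftrightarrow \chi^{(2)}\neq 0$ versus $\delta=1 \Leftrightarrow t^{(2)}\neq 0$ from \cite{BV}, and conclude by additivity of $\delta$ over factors (which also forces at most one summand to survive, so the cancellation worry you raise at the end is automatically absent). The wrinkle you flag about the "mixed" orbit type — a cycle of complex places where $\sigma^{|o|}$ acts by conjugation, with fixed group $\G(\R)$ rather than $\G(\C)$ — is real and the paper glosses over it just as you do, so in that respect your treatment is no less complete than the paper's.
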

\begin{proof}
Suppose that there are $n$ orbits. Using Lemma \ref{productaut}, we expand $\tau^{(2)\sigma}_{X_{\G(\mathbb{E})}}(\widetilde{\rho}_{\mathbb{E}})$ as a sum of $n$ terms.  Each summand is a product of $\mathrm{Lef}^{(2) \sigma}_{X_{\G(\mathbb{E}_o)}}(\widetilde{\rho}_o)$ for $n - 1$ of the orbits $o$ and of $t^{(2)\sigma}_{X_{\G(\mathbb{E}_o)}}(\widetilde{\rho}_o)$ for the remaining orbit $o.$  Thus, exactly $n-1$ of the $\mathrm{Lef}^{(2) \sigma}$'s must be non-zero and the remaining $t^{(2) \sigma}$ must be non-zero; say $t^{(2) \sigma}_{X_{\G(\mathbb{E}_{o^{*}})}}(\rho_{o^{*}}) \neq 0.$  But by the preceding computations relating $t^{(2) \sigma}$ and $\mathrm{Lef}^{(2) \sigma}$ to their untwisted analogues, this is possible if and only if $\delta( \G(\mathbb{E}_{o^{*}})^{\sigma}) = 1$ and $\delta( \G(\mathbb{E}_o)^{\sigma}) = 0$ for all $o \neq o^{*}.$  This is equivalent to $\delta(\G(\mathbb{E})^{\sigma}) = 1.$
\end{proof}

\section{Application to torsion in cohomology}
\subsection{Generalities on Reidemeister torsion and the Cheeger-M\"{u}ller theorem}

Let $A^{\bullet}$ be a finite chain complex situated in degree $\geq0$ of $K$-vector spaces for a field $K.$  Suppose the chain groups $A^i$ and the cohomology groups $H^i(A^{\bullet})$ are equipped with volume forms, i.e. with non-zero elements $\omega_i \in \mathrm{det}(A^i)^{*}$ and $\mu_i \in (\mathrm{det}(H^i(A^{\bullet}) )^{*}.$ These define elements $\omega$ in 
$\mathrm{det}(A^{\bullet})^*$ and $\mu$ in $\mathrm{det}(H(A^{\bullet}))^*$, where
$$\mathrm{det}(A^{\bullet}) := \mathrm{det}(A_0) \otimes \mathrm{det}(A_1)^{-1} \otimes \mathrm{det}(A_2) \otimes \ldots $$
and similarly for $\mathrm{det}(H(A^{\bullet}))$. 

There is a natural isomorphism \cite[$\S 1$]{KnudsenMumford}
\begin{equation} \label{detofcohomology}
\mathrm{det}(A^{\bullet}) \otimes \mathrm{det}(H(A^{\bullet})) \cong K.
\end{equation}
We let $s_{A^{\bullet}}$ denote the preimage of $1$ under the above isomorphism.

\begin{defn}[Reidemeister torsion of a complex with volume forms]
The Reidemeister torsion $RT(A^{\bullet}, \omega_{\bullet}, \mu_{\bullet})$ is defined to be 
$$\omega \otimes \mu^{-1} (s_{A^{\bullet}}).$$
\end{defn}

It is readily checked that if $\mu'_i = c_i \mu_i$ for some non-zero constants $c_i,$ then
\begin{equation} \label{rtscaling}
RT(A^{\bullet}, \omega, \mu') = \frac{c_0 c_2 \cdots }{c_1 c_3 \cdots} \cdot RT(A^{\bullet}, \omega, \mu).
\end{equation}

\begin{exa} \label{combinatorialvolume}
{\rm Suppose $K=\C$ and $A^\bullet = C^\bullet \otimes \C$ for some finite complex of free abelian groups $C^\bullet$. A $\mathbb{Z}$-basis $(e_1, \ldots , e_n)$ of $C^i$ determines a volume form $\omega_i$ on $A^i = C^i \otimes_{\mathbb{Z}} \C$ by the formula
$$\omega_i (e_1 \wedge ... \wedge e_n) = 1.$$
The volume form $\omega_i$ is well-defined up to sign. We endow $A^{\bullet}$ with such a volume form $\omega_{\mathbb{Z}} \in \mathrm{det}(A^{\bullet})^*$. Using that $H^i(A^{\bullet}) = H^i(C^{\bullet})_\C$ we can similarly endow $H(A^{\bullet})$ with a volume form $\mu_{\mathbb{Z}} \in \mathrm{det}(H(A^{\bullet}))^*$. Then
$$|RT(A^{\bullet}, \omega_{\mathbb{Z} }, \mu_{\mathbb{Z}} )| = \frac{|H^1(C^{\bullet})_{\mathrm{tors}}| \cdot |H^3(C^{\bullet})_{\mathrm{tors}}| \cdots}{|H^0(C^{\bullet})_{\mathrm{tors}}| \cdot |H^2(C^{\bullet})_{\mathrm{tors}}| \cdots}.$$}
\end{exa}

\begin{defn}
Fix a triangulation $T$ of a Riemannian manifold $(M,g)$ together with a metrized local system of free abelian groups $L \rightarrow M.$  Let $C^i (M, L ; T)$ be the corresponding cochain (free abelian) groups. We can endow $A^\bullet := C^\bullet (M,L; T) \otimes \C$ with a combinatorial volume form $\omega_{\mathbb{Z}}$ as defined in Example \ref{combinatorialvolume}.  Identifying each $H^i(M,L_{\C})$ with the vector space of harmonic $L_{\C}$-valued $i$-forms on $M$ we define a volume form $\mu_g$ on $H^\bullet (M, L_\C) = H(A^{\bullet}).$  We finally define
$$RT(M,L) := RT(A^{\bullet} , \omega_{\mathbb{Z}}, \mu_g).$$
It follows from Example \ref{combinatorialvolume} and \eqref{rtscaling} that we have:
\begin{equation} \label{eq:RTML}
|RT(M,L)| = \frac{|H^1(M,L)_{\mathrm{tors}}| \cdot |H^3(M,L)_{\mathrm{tors}}| \cdots}{|H^0(M,L)_{\mathrm{tors}}| \cdot |H^2(M,L)_{\mathrm{tors}}| \cdots} \times \frac{R^0(M,L) R^2(M,L) \cdots}{R^1(M,L) R^3(M,L) \cdots},
\end{equation}
where $R^i(M,L)$ is the volume $\vol(H^i(M,L)_{\mathrm{free}}),$ with respect to the volume forms obtained by identifying $H^i(M, L_\C)$ with the space of harmonic $L$-valued $i$-forms on $M.$    
\end{defn}
Note that it follows in particular from \eqref{eq:RTML} that $|RT(M,L)|$ does not depend on the triangulation $T.$  The following beautiful theorem relates $|RT(M,L)|$ to the analytic torsion $T_M (L) = \exp (t_M (L))$ that we have already considered in the particular case of locally symmetric spaces\footnote{In fact we have more generally considered the {\it twisted} analytic torsion that we deal with below.}, see \cite{muller} for the general definition. 

\begin{thm}[Cheeger-M\"{u}ller theorem for unimodular local systems \cite{muller}] \label{cmtheorem}
Let $L \rightarrow M$ be a unimodular local system over a compact Riemannian manifold.  There is an equality
$$T_M (L) = |RT(M,L)|.$$
\end{thm}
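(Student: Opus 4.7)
The strategy I would pursue is Müller's original approach: reduce the unimodular case to the classical Cheeger--Müller theorem for unitary local systems via an anomaly argument. The classical Cheeger--Müller theorem (proved independently in the early 1980s for $L$ equipped with a \emph{parallel} Hermitian metric, i.e.\ a unitary representation) is the base case; one then shows the difference $\log t(M,L)-\log RT(M,L)$ is insensitive to the Hermitian metric chosen on $L$, so one may deform a unimodular structure to a unitary one.

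The plan in more detail. First, for the unitary case: both $\log t(M,L)$ and $\log RT(M,L)$ are expressible through the Hodge/combinatorial Laplacians, and one compares them by analyzing the zeta function $\zeta_i(s) = \tfrac{1}{\Gamma(s)} \int_0^\infty t^{s-1}\,\mathrm{Tr}(e^{-t\Delta_i}\big|_{(\ker \Delta_i)^{\perp}})\,dt$ at $s=0$. The agreement of analytic and combinatorial torsion reduces to matching the finite parts of the respective heat traces; the small-$t$ asymptotic is handled by local index-theoretic terms that cancel in the alternating sum, and the large-$t$ asymptotic is controlled by the spectral gap. Witten deformation (Bismut--Zhang) streamlines this by interpolating between the de~Rham complex and the Morse complex of a Morse function on $M$, with the Witten parameter $T\to\infty$ concentrating the Laplacian spectrum near critical points.

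Second, pass from unitary to unimodular. Let $h_u$ be a one-parameter family of Hermitian metrics on $L$ with generator $\dot h_u = h_u \cdot A_u$ for some self-adjoint $A_u \in \mathrm{End}(L)$. A direct variational computation gives
\begin{equation*}
\frac{d}{du}\log RT(M,L;h_u) \;=\; \sum_i (-1)^i \tfrac12 \,\mathrm{tr}\!\left(A_u \,\big|\, H^i(M,L_\C)\right) + (\text{local terms}),
\end{equation*}
and an analogous formula for analytic torsion, where the local terms on the analytic side come from the constant term in the heat kernel expansion and involve pointwise traces of $A_u$ against curvature data. The crucial input is \emph{unimodularity}: it forces the pointwise traces of $A_u$ coming from the holonomy to vanish in the relevant combinations, so the local terms match between the two sides. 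One concludes that $u\mapsto \log t(M,L;h_u)-\log RT(M,L;h_u)$ is constant. Choosing an endpoint where the metric is parallel (possible since the unimodular flat bundle admits, up to homotopy in families, a unitary deformation when restricted to a compact manifold, or by an explicit construction in a neighborhood of the unitary locus) reduces to the classical case.

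The main obstacle is the anomaly computation for the analytic torsion. It requires a careful understanding of how the regularized determinants of the Hodge Laplacians depend on the Hermitian metric on $L$ through both the inner product on forms and the codifferential $\delta$, together with the Seeley--de Witt expansion of $\mathrm{Tr}(A_u e^{-t\Delta_i})$ as $t\to 0$. Matching this expansion with the discrete anomaly on the Reidemeister side, and showing the nontrivial local contributions vanish precisely under the unimodularity hypothesis, is the technical heart of Müller's argument.
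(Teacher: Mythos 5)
This theorem is quoted from M\"uller's 1993 paper (the \textit{Analytic torsion and $R$-torsion for unimodular representations} paper) and the present paper offers no proof of its own, so the comparison is necessarily against M\"uller's original argument rather than anything internal to the text.

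Your outline captures part of M\"uller's strategy correctly: the anomaly philosophy is indeed the engine, the variational computation for both torsions does produce boundary terms proportional to pointwise traces of the logarithmic derivative of the Hermitian metric, and unimodularity is exactly what kills the non-local trace contributions. The Bismut--Zhang route you mention in passing is also a legitimate alternative (and in fact gives the more general statement with an explicit defect term). But there is a genuine gap at the final step: the claim that one may ``deform a unimodular structure to a unitary one'' and thereby reduce to the classical Cheeger--M\"uller theorem is false. A flat bundle with, say, holonomy a discrete faithful representation into $\mathrm{SL}_2(\R)$ is unimodular but admits \emph{no} parallel Hermitian metric whatsoever; the space of Hermitian metrics on a fixed flat bundle is contractible, but ``parallel'' is a closed constraint that is typically unachievable, and deforming the representation itself through the character variety need not reach the unitary locus within the same connected component. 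M\"uller's actual proof does not perform this reduction. After establishing the anomaly formula and the vanishing of the unimodular local term, he closes the argument by different means --- an induction over a handle decomposition (surgery/gluing), passage to $M\times S^1$ and product formulas, and independent verification on elementary pieces --- precisely because the naive ``retract to the unitary case'' move is unavailable. Without a substitute for that final step, your argument does not reach the conclusion.
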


\subsection{Twisted local system over locally symmetric spaces}
Let $\mathbf{G}$ be a connected semisimple quasi-split algebraic group defined over $\RR$. Let $\mathbb{E}$ be an \'{e}tale $\R$-algebra such that $\mathbb{E} / \RR$ is a cyclic Galois extension with Galois group generated by $\sigma \in \Aut(\mathbb{E} / \R).$ The automorphism $\sigma$ induces a corresponding automorphism of the group $G$ of real points of $\mathrm{Res}_{\mathbb{E}/ \RR} \mathbf{G}$. We assume that condition \eqref{condition} holds, namely: $H^1 (\sigma , G) = \{ 1\}$. 

Choose a Cartan involution $\theta$ of $G$ that commutes with $\sigma$ and denote by $K$ the group of fixed points of $\theta$ in $G$ and let $X= G /K$ be the associated symmetric space and. The involution $\sigma$ acts on $G$ and $X$; we denote by $G^{\sigma}$ and $X^{\sigma}$ the corresponding sets of fixed points. 

As above we denote by $\widetilde{G}$ the twisted space $G \rtimes \sigma$. 

Now let $\Gamma$ be a torsion free cocompact lattice of $G$ that is $\sigma$-stable and let $(\widetilde{\rho} , F)$ be a complex finite dimensional $\sigma$-stable irreducible representation of $\widetilde{G}$ defined over $\R$ such that
\begin{enumerate}
\item $\rho (\Gamma)$ stabilizes some fixed lattice $\mathcal{O}$ in the real points of $F$ \label{condition1};
\item the representation $\tilde{\rho}$ is strongly twisted acyclic (see \S \ref{def:SA}). \label{condition2} 
\end{enumerate}

The group $\Gamma$ acts on $X$ and diagonally on $X \times \mathcal{O}$ (through the representation $\rho$ on the second factor). We let
$$\mathcal{M} = \Gamma \backslash X \mbox{ and } \loc = \Gamma \backslash (X \times \mathcal{O} )$$
be the corresponding quotients. Projection on the first factor gives a unimodular local system $\loc \rightarrow \M$ of free abelian groups; it is moreover equivariant with respect to an automorphism of $\M$ of finite order.\footnote{We shall use the notation $\M$ for manifolds with an involution and the notation $M$ when there is no involution.} From now on we shall furthermore assume that the order $p$ of $\sigma$ is \emph{prime}.

We shall consider a family of covering manifolds $\mathcal{M}_n$ associated to a sequence $\{ \Gamma_n \}$ of finite index $\sigma$-stable subgroups such that for every $\delta \in \widetilde{\Gamma}$ with $\delta \notin Z^1 (\sigma , \Gamma)$ the sequence 
\begin{equation} \label{seq}
\left(\frac{| \{ \gamma \in \Gamma_n \backslash \Gamma \; : \; \gamma \delta \gamma^{-1} \in \widetilde{\Gamma}_n \}|}{[\Gamma^\sigma : \Gamma_n^\sigma]} \right)_{n \geq 0}
\end{equation}
remains bounded by some uniform constant $A,$ independent of $\delta,$ and converges to $0$ as $n$ tends to infinity. \medskip

\noindent
{\it Remark.} Many examples of sequences $\{ \Gamma_n \}$ satisfying \eqref{seq} are furnished by Proposition \ref{boundinggrowth}.  In particular, supose $G = \G(E_\R)$ for some semisimple group $\G / O_{F,S},$ where $O_{F,S}$ denotes the ring of $S$-integers in a number field $F$ and $E/F$ is a cyclic Galois extension.  Let $\Gamma \subset \G(E_\R)$ be an arbitrary $\sigma$-stable cocompact lattice.  The sequence $\{  \Gamma_\mathfrak{p} \},$ where $\Gamma_\mathfrak{p} = \{ \gamma \in \Gamma: \gamma = 1 \mod \mathfrak{p} \},$ satisfies \eqref{seq}.  See Proposition \ref{boundinggrowth} for further details and more examples of a similar flavor.

\medskip

\subsection{Equivariant Reidemeister torsion}

Let $P(x) = x^{p -1} + x^{p-2} + ... + 1.$  For a polynomial $h \in \mathbb{Z}[x]$ and a $\mathbb{Z}[\sigma]$-module $A,$ we define $A^{h(\sigma)}: = \{ a \in A: h(\sigma) \cdot a = 0 \}.$  We shall denote by $A[p^{-1}]$ the localization $S^{-1} A$ where $S$ is the multiplicative subset $\{ p^n \; : \; n \in \mathbb{Z} \} \subset \mathbb{Z} \subset \mathbb{Z} [\sigma ]$ and let $A[p^{\infty}] = \{ a \in A: p^n a = 0 \text{ for some } n \geq 0 \},$ the $p$-power torsion subgroup of $A.$ Finally if $K$ is a field we set $\loc_K := \loc \otimes_{\mathbb{Z}} K$. 

In particular $\loc_{\C}$ defines a flat complex bundle on $\M$ and the action of $\sigma$ on $\M$ lifts to the flat vector bundle $\loc_\C$. We shall apply results of Bismut and Zhang to relate equivariant combinatorial and analytical torsions of $\loc_\C \to \M$.  

First recall that it is a general result of Wasserman that there exists a $\sigma$-invariant Morse function $f : \M \to \mathbb{R}$. We shall in fact work with particular choices of Morse functions on the $\M_n$'s.  By \cite{Illman} there exists an equivariant $CW$-triangulation on $\M$. It lifts to an equivariant $CW$-triangulation on each $\M_n$. By a standard construction there correspond to these triangulations natural Morse functions $f_n : \M_n \to \R$ such that the set of critical points of $f_n$ is exactly the set of barycenters of the simplexes of the $CW$-triangulation of $\M_n$. One verifies easily that these constructions can be made $\Gamma/ \Gamma_n$-equivariantly and that the resulting functions $f_n$ can be made $\sigma$-invariant. The proof of \cite[Theorem 1.10]{BZ2} moreover implies that one may construct these $f_n$'s such that:
\begin{equation} \label{A}
\mbox{for any critical point } x \in {\rm Crit}(f_n) \cap \M_n^\sigma, \mbox{ the Hessian } d^2 f_n (x) \mbox{ is positive definite on } N_x.
\end{equation}
Here $N$ denotes the normal bundle to $\M_n^{\sigma}$ in $\M_n$. 

It then follows from \cite[Theorem 1.8]{BZ2} that one may modify the locally symmetric metric $g$ on $\M_n$ to get a metric $g'$ which equals $g$ in a neighborhood of all critical points of $f_n$ in such a way that the corresponding gradient vector field $X_n = \nabla_{g'}(f_n)$ satisfies the Smale transversality condition.

To ease notation we shall now concentrate of $\M$ and $f$ and explain when needed what happens when $(\M , f)$ is replaced by $(\M_n , f_n )$. 
Let $MS(\M ,\loc_\C )$ be the Morse-Smale complex \cite[$\S 1.6$]{BZ1} associated to the $\sigma$-invariant Morse function $f$ and the associated invariant transversal gradient vector field $X$. We endow the chain groups of $MS(\M ,\loc_\C)^{\sigma - 1}$ and $MS(\M ,\loc_\C)^{P(\sigma)}$ with volume forms induced from the metric on $\loc_\C $ and the combinatorial volume forms induced by the unstable cells of the gradient vector field \cite[$\S 1.4$]{Lip1}.  We endow the cohomology groups $H^i(\M ,\loc_{\C})^{\sigma - 1}$ and $H^i(\M ,\loc_{\C})^{P(\sigma)}$ with the metric induced by the $L^2$-metric on $\loc_\C$-valued harmonic $i$-forms on $\M$ \cite[Definition B.4]{Lip1}.

\begin{defn} \label{deftwistedrt}
The equivariant Reidemeister torsion of the equivariant local system $\loc_\C \rightarrow \M$ of $\C$-vector spaces is defined by
$$\log RT_{\sigma}(\M,\loc_\C; f,X) := \log |RT(MS(\M,\loc_\C)^{\sigma - 1})| - \frac{1}{p-1} \log |RT(MS(\M,\loc_\C)^{P(\sigma)})| .$$
\end{defn}

\medskip
\noindent
{\it Remarks.} 1. Equivariant Reidemeister torsion a priori depends on the choice of Morse theoretic data; see the remark following Theorem \ref{twistedatequalstwistedrt} for further discussion.  However, the discrepancy between equivariant Reidemeister torsion and a purely cohomological quantity can be bounded independently of the Morse theoretic data for local systems endowed with integral structure; see Theorem \ref{concretert} below.  

2. Let $G$ be the finite group generated by $\sigma$. We may define a Reidemeister torsion $\log RT_G (\M , \loc_\C ; f , X )$ with values in the complex representation ring of $G$. We could then have alternatively defined $\log RT_{\sigma}(\M,\loc_\C; f,X)$ as the trace of $\sigma$ in $\log RT_G (\M , \loc_\C ; f , X )$; this viewpoint is closer to that of \cite{BZ2}. Both definitions agree: indeed if $C$ is a finite dimensional $\mathbb{Z}[\sigma]$-module, its complexification $A=C_\C$ decomposes as a direct sum $\oplus_\chi A_{\chi}$ of isotypical subspaces indexed by characters of $\mathbb{Z}/p \mathbb{Z}$, and we have 
$$C^{\sigma} = A_1 \mbox{ and } C^{P(\sigma)} =  \oplus_{\chi \neq 1} A_{\chi}.$$
We conclude that the trace of $\sigma$ in $C$ is equal to 
$$\dim C^\sigma + (\zeta + \ldots + \zeta^{p-1}) \frac{1}{p-1} \dim C^{P(\sigma)} = \dim C^\sigma - \frac{1}{p-1} \dim C^{P(\sigma )}.$$
(Here $\zeta$ is some primive $p$th root of unity.)

\medskip

\begin{defn} \label{twistedregulator}
Let $R^i(\M,\loc)^{\sigma - 1}$ denote the covolume of the lattice $H^i(\M,\loc)^{\sigma - 1}$ in the real vector space $H^i(\M,\loc_\R)^{\sigma - 1}$ with inner product induced by that on harmonic forms.  Define $R^i(\M,\loc)^{P(\sigma)}$ similarly.  The \emph{equivariant regulator} $R^i_{\sigma}(\M,\loc)$ is defined to be 
$$R^i_{\sigma}(\M,\loc) := \frac{R^i(\M,\loc)^{\sigma - 1}}{ \left( R^i(\M,\loc)^{P(\sigma)} \right)^{\frac{1}{p-1}}}$$
\end{defn}

\begin{thm}[Concrete relationship between twisted RT and cohomology] \label{concretert}
Suppose that the fixed point set $\M^{\sigma}$ has Euler characteristic 0.  Then for an arbitrary choice of invariant Morse function $f$ and invariant transversal gradient vector field $X,$
\begin{equation*}
\begin{split}
\log RT_{\sigma}(\M,\loc_\C ; f, X) =&  - \sum_i (-1)^i \left(\log \left|H^i(\M, \loc)_{\mathrm{tors}}[p^{-1}]^{\sigma - 1} \right|  -  \frac{1}{p-1} \log \left|H^i(\M, \loc)_{\mathrm{tors}}[p^{-1}]^{P(\sigma)} \right| \right) \\
&+ \sum_i (-1)^i \log R^i_{\sigma}(\M, \loc) \\
&+ O\left( \log|H^{*}(\mathcal{M}, \loc)_{\mathrm{tors}}[p^{\infty}]| +\log |H^{*}(\mathcal{M}, \loc_{\mathbb{F}_p})| + \log |H^{*}(\M^{\sigma}, \loc_{\mathbb{F}_p})| \right)
\end{split}
\end{equation*}
\end{thm}
\begin{proof}
For $\loc_\C$ acyclic, this is proven in \cite[Corollary 3.8]{Lip1}.  As described in \cite[Proposition B.6]{Lip1}, almost exactly the same proof applies even to those $\loc$ for which $\loc_\C$ is not acyclic.    
\end{proof}

\medskip
\noindent
{\it Remark.} It follows from the proof given in \cite{Lip1} that the implicit constant in $O(\cdot )$ only depends on the dimension of $\M_n$. In particular it is independent of $n$. 

\subsection{The equivariant Cheeger-M\"{u}ller theorem}
Recall that we have defined in \eqref{TT} the twisted analytic torsion of a locally symmetric space equipped with an equivariant, metrized, unimodular local system of complex vector spaces acted on equivariantly and isometrically by $\sigma$ of finite order. Alternatively this is equal to 
$$\log T^{\sigma}_{(\M , g)} (\loc_\C ) := \frac12 \frac{\partial \theta^{\loc_\C }}{\partial s} (0) (\sigma),$$
where the function $\theta^{\loc_\C } (s) (\sigma )$ is defined in \cite[Definition 2.2]{BZ2} and $g$ denotes the locally symmetric metric on $\M$. The following theorem will be easily deduced from the deep work of Bismut and Zhang \cite{BZ2}.

\begin{thm}[Equivariant Cheeger-M\"{u}ller theorem] \label{twistedatequalstwistedrt}
Suppose that $X^{\sigma}$ is odd-dimensional. Then we have:
$$\log T^{\sigma}_{(\M , g)} (\loc_\C ) = \log RT_{\sigma}(\M,\loc_\C ; f,X).$$ 
\end{thm}
\begin{proof} First observe that it follows from \cite[Proposition 2.3]{Rohlfs} that the set of fixed points 
$$\mathcal{M}^\sigma = (\Gamma \backslash X)^{\sigma}$$
is a finite disjoint union of its connected components that are parametrized by $H^1 (\sigma , \Gamma )$. Since, under our assumption \eqref{condition}, the map 
$$H^1 (\sigma , \Gamma ) \to H^1 (\sigma , G)$$ 
has obviously trivial image, it moreover follows that each connected component is isometric to $\Gamma^{\sigma} \backslash X^\sigma$. In particular it is odd dimensional. Moreover, splitting the complex vector space $F = \oplus_j F^{\alpha_j}$ according to the eigenvalues of $\sigma$ yields a decomposition of the complex vector bundle $\loc_\C$ over $\Gamma^{\sigma} \backslash X^\sigma$ as a direct sum of complex vector bundles $\Gamma^{\sigma} \backslash (X^{\sigma} \times F^{\alpha_j} )$ that are all unimodular (note that by hypothesis $\Gamma^{\sigma}$ is torsion-free). The differential form $ \theta_{\sigma}(\loc_{\C } , h^{\loc_\C })$ defined in \cite[Definition 2.5]{BZ2} therefore vanishes.  And, by \cite[Theorem 0.2]{BZ2}, we conclude that
\begin{multline} \label{BZformula}
2 [ \log RT_{\sigma}(\M,\loc_\C ; f,X) - \log T^{\sigma}_{(\M , g)} (\loc_\C ) ] \\ 
-  \frac{1}{4} \sum_{x \in {\rm Crit}(f) \cap \M^{\sigma}} (-1)^{\ind(f|_{\M^{\sigma}},x)} \sum_j (n_{+}(\beta_j,x) - n_{-}(\beta_j,x)) \cdot C_j \cdot \tr [\sigma | \loc_x], 
\end{multline}
where:
\begin{itemize}
\item the integers $n_{+}(\beta_j,x)$ and $n_{-}(\beta_j,x)$ respectively denote the number of positive and negative eigenvalues of the Hessian $d^2 f(x)$ acting on $N(\beta_j)_x$, where $N(\beta_j )$ is the subbundle of the normal bundle $N$ to $\M^{\sigma}$ on which $\sigma$ acts by $e^{\pm i \beta_j}$; 

\item the constant $C_j$ is related to the equivariant torsion of a sphere.  See \cite[\S 11]{LottRothenberg}.
\end{itemize}
Now, for our particular choice Morse data and replacing the locally symmetric metric $g$ by the modified metric $g'$, we deduce from \eqref{A} that the expression
$$\sum_j (n_{+}(\beta_j,x) - n_{-}(\beta_j,x)) \cdot C_j \cdot \tr [\sigma | \loc_x]$$
is constant for critical points $x$ in a single connected component $\M_0$ of the fixed point set $\M^\sigma.$  Indeed, $n_{+}(\beta_j,x) = \dim N(\beta_j)|_{\M_0}$, $n_{-}(\beta_j,x) = 0,$ and, since $\sigma$ preserves the flat connection, $\tr(\sigma | \loc_x)$ is also constant.  Therefore,
\begin{multline*}
- \frac{1}{4} \sum_{x \in {\rm Crit}(f) \cap \M^{\sigma}} (-1)^{\ind(f|_{\M^{\sigma}},x)} \sum_j (n_{+}(\beta_j,x) - n_{-}(\beta_j,x)) \cdot C_j \cdot \tr [\sigma | \loc_x] \\
\begin{split}
&= \sum_{ \M_0 \in \pi_0(\M^\sigma)} \text{constant}(\M_0) \sum_{x \in {\rm Crit}(f) \cap \M_0}  (-1)^{\ind(f|_{\M_0},x)} \\
&= \sum_{\M_0 \in \pi_0(\M^\sigma)} \text{constant}(\M_0) \chi(\M_0) \\
&= 0.
\end{split}
\end{multline*}
Thus, 
$$ \log RT_{\sigma}(\M,\loc_\C ; f,X) - \log T^{\sigma}_{(\M , g ' )} (\loc_\C ) = 0.$$
To conclude, note that by the anomaly formula of Bismut-Zhang \cite[Theorem 0.1]{BZ2},\footnote{Here again we use that the differential form $ \theta_{\sigma}(\loc, h^\loc)$ vanishes.}
$$T^{\sigma}_{(\M , g ' )} (\loc_\C ) = T^{\sigma}_{(\M , g  )} (\loc_\C )$$
when all components of the fixed point set are odd-dimensional.
\end{proof}
      
\subsection{Growth of torsion in cohomology of locally symmetric spaces} \label{growthoftorsion}
Under our hypotheses it follows from Theorem \ref{twistedtorsionlimitmultiplicity} that
\begin{equation} \label{lim1}
\frac{\log T_{\Gamma_n \backslash X}^{\sigma} (\rho)}{|H^1 (\sigma , \Gamma_n ) | \vol (\Gamma_n^{\sigma} \backslash G^{\sigma})} \to t_X^{(2) \sigma} (\rho) .
\end{equation}

From now on we will furthermore assume that $\delta (G^\sigma) = 1$; note that this forces $X^{\sigma}$ to be odd dimensional. Theorem \ref{twistedatequalstwistedrt} therefore implies that 
$$\log T_{\Gamma_n \backslash X}^{\sigma} (\rho) = \log RT_{\sigma}(\M_n ,\loc_{\C } ; f_n ,X_n ).$$ 
Then Theorem \ref{concretert}, the remark following it, and Equation \eqref{lim1} imply the following

\begin{cor} \label{refinedcohomologygrowth}
Under the above hypotheses (in particular with $\sigma$ of \emph{prime order} $p$) suppose furthermore that
$$\log |H^{*}(\Gamma_n, \mathcal{O})[p^{\infty}]| = o(|H^1 (\sigma , \Gamma_n ) |\vol(\Gamma_n^{\sigma} \backslash G^\sigma )) \mbox{ and } \log |H^* (\Gamma_n^{\sigma}, \mathcal{O}_{\mathbb{F}_p})| = o(\vol(\Gamma_n^{\sigma} \backslash G^\sigma )).$$
Then 
$$\frac{\sum_i (-1)^i \log R^i_{\sigma}(\Gamma_n ,\rho ) - \sum_i (-1)^i \left(\log \left|H^i(\Gamma_n, \mathcal{O})[p^{-1}]^{\sigma - 1} \right|  -  \frac{1}{p-1} \log \left|H^i(\Gamma_n , \mathcal{O} )[p^{-1}]^{P(\sigma)} \right| \right)}{|H^1 (\sigma , \Gamma_n ) | \vol (\Gamma_n^{\sigma} \backslash G^{\sigma})} \xrightarrow{n \rightarrow \infty}  t_X^{(2) \sigma} (\rho).$$
\end{cor}

In particular, if $\rho$ is strongly acyclic, then 
$$\frac{ - \sum_i (-1)^i \left(\log \left|H^i(\M, \loc)[p^{-1}]^{\sigma - 1} \right|  -  \frac{1}{p-1} \log \left|H^i(\M, \loc)[p^{-1}]^{P(\sigma)} \right| \right)}{|H^1 (\sigma , \Gamma_n ) | \vol (\Gamma_n^{\sigma} \backslash G^{\sigma})} \xrightarrow{n \rightarrow \infty}  t_X^{(2) \sigma} (\rho).$$

One can deduce from this an unconditional cohomology growth result:

\begin{cor} \label{unrefinedcohomologygrowth}
Enforce the same notations and hypotheses as in Corollary \ref{refinedcohomologygrowth}, with \emph{no a priori cohomology growth assumptions}.
Furthermore, assume that $\rho$ is strongly acyclic and that $\mathcal{O}_{\mathbb{F}_p}$ is trivial. Then
$$\limsup \frac{\sum_i \log | H^{i}(\Gamma_n, \mathcal{O})_{\mathrm{tors}}|}{\vol (\Gamma_n^{\sigma} \backslash G^{\sigma})} > 0.$$
\end{cor}

\medskip
\noindent
{\it Remark.}  The local system $(\Gamma_n \backslash X, \mathcal{O}_{\mathbb{F}_p})$ is trivial if and only if $\Gamma_n$ is contained in the kernel of $\rho$ mod $p.$  We can therefore construct many examples using Proposition \ref{boundinggrowth} relative to the over group $\Gamma = \ker(\rho \mod p).$

\begin{proof} 
Suppose that not both of the growth hypotheses of Corollary \ref{refinedcohomologygrowth} hold.
\begin{itemize}
\item
If $\limsup \frac{\log |H^{*}(\Gamma_n, \mathcal{O})[p^{\infty}]|}{\vol (\Gamma_n^{\sigma} \backslash G^{\sigma})} > 0,$ we are done.
\item
Suppose the mod $p$ cohomology of $((\Gamma_n \backslash X)^{\sigma}, \mathcal{O}_{\mathbb{F}_p})$ is large, i.e. 
$$\frac{\log |H^{*}((\Gamma_n \backslash X)^\sigma, \mathcal{O}_{\mathbb{F}_p}) |}{ |H^1(\sigma,\Gamma_n)| \vol (\Gamma_n^{\sigma} \backslash G^{\sigma}) } \nrightarrow 0.$$
Because $\mathcal{O}_{\mathbb{F}_p}|_{(\Gamma_n \backslash X)^\sigma }$ is trivial,  the latter implies that
$$\frac{\log |H^{*}((\Gamma_n \backslash X)^\sigma, \mathbb{F}_p) |}{ |H^1(\sigma,\Gamma_n)| \vol (\Gamma_n^{\sigma} \backslash G^{\sigma}) } \nrightarrow 0.$$
The conclusion follows by Smith theory \cite[$\S$ III]{bredon}.  Indeed \cite[$\S$ III.4.1]{bredon}, 
\begin{equation} \label{smith}
\dim_{\mathbb{F}_p} H^{*}( (\Gamma_n \backslash X)^\sigma, \mathbb{F}_p) \leq \dim_{\mathbb{F}_p} H^{*}(\Gamma_n \backslash X , \mathbb{F}_p) = \frac{1}{\mathrm{rank} \ \mathcal{O}} \dim_{\mathbb{F}_p} H^{*}(\Gamma_n \backslash X ,  \mathcal{O}_{\mathbb{F}_p} ), 
\end{equation}
where the final equality follows because $\mathcal{O}_{\mathbb{F}_p}$ is trivial.  Because $\rho$ is strongly acyclic, $(\Gamma_n \backslash X ,\mathcal{O})$ has no rational cohomology.  The desired conclusion then follows by the universal coefficient theorem. 
\end{itemize}
  
Otherwise, both a priori cohomology growth hypothesis from Corollary \ref{refinedcohomologygrowth} are satisfied. We thus apply Corollary \ref{refinedcohomologygrowth}, whose conclusion is more refined. Note that it follows from Theorems \ref{twistedtorsionproduct} and \ref{torsioncomparison} (and the computations in the non-twisted case done in \cite{BV}) that $t^{(2) \sigma}_{X}(\rho) \neq 0$ whenever $\delta (G^{\sigma}) =1$. 
\end{proof}

\medskip
\noindent
{\it Remark.} Corollaries \ref{refinedcohomologygrowth} and \ref{unrefinedcohomologygrowth} were one major source of inspiration for this paper.  We sought to understand when $t^{(2) \sigma}_{X}(\rho) \neq 0$ in order to detect torsion cohomology growth. Note that in Corollary \ref{unrefinedcohomologygrowth} we have assumed that the sequences \eqref{seq} are uniformly bounded and tends to $0$ with $n$. Proposition \ref{boundinggrowth} verifies this assumption when the $\Gamma_n$ are principal congruence subgroups of prime levels. Theorem \ref{T:14} of the Introduction therefore follows from Corollary \ref{unrefinedcohomologygrowth}.

\subsection{Comparison to $p$-adic methods}
Let $\G$ be an algebraic group over $\mathbb{Z}$ which is smooth over $\mathbb{Z}[N^{-1}]$ and for which $\G_\mathbb{Q}$ is semisimple.  As a byproduct of their study of completed cohomology \cite{CEcompletedcohomology}, Calegari and Emerton are able to prove non-trivial upper and lower bounds on cohomology growth for the family of groups $\Gamma_{p^n} = \ker \left(\G(\mathbb{Z}) \rightarrow \G(\mathbb{Z} / p^n \mathbb{Z}) \right)$ for any prime $p \nmid N.$  Using Poincar\'{e} duality for completed cohomology, they show 
$$\dim_{\mathbb{F}_p} H^{*}(\Gamma_{p^n},\mathbb{F}_p) \gg [\Gamma_1:\Gamma_{p^n}]^{1 - \alpha}$$
where $\alpha = \frac{\dim (G/K)}{\dim G}$; Calegari and Emerton prove this for $\{ \Gamma_{p^n} \}$ a family of 3-manifold groups in \cite{CEhyperbolic3manifold} and Calegari extends this to general $\G$ in \cite{CalegariBlog}.  For any local system $\loc$ arising from a representation of $\G$ defined over $\mathbb{Q}$ with $\loc_{\mathbb{Q}}$ acyclic, they deduce that
\begin{equation} \label{completedcohomologylowerbound}
\log |H^{*}(\Gamma_{p^n},\loc)_{\mathrm{tors}}| \geq \log |H^{*}(\Gamma_{p^n}, \loc)[p^{\infty}]| \gg [\Gamma_1:\Gamma_{p^n}]^{1 - \alpha}
\end{equation}
as an immediate consequence.  It is noteworthy that $\alpha = \frac{1}{2}$ if $G$ is a complex Lie group and $1 - \alpha \geq \frac{1}{3}$ for arbitrary $G.$  The resulting lower bound obtained by \eqref{completedcohomologylowerbound} is of the same quality as that proven in Corollary \eqref{unrefinedcohomologygrowth} for quadratic base change of groups with $\delta(G^{\sigma}) = 1$ and is always strictly larger for cyclic base change of degree greater than two for groups with $\delta(G^{\sigma}) = 1.$  Nonetheless, these lower bounds do not subsume our main theorems on torsion cohomology growth.

\begin{itemize}
\item
Suppose $\delta(G^{\sigma}) = 1.$  By Corollary \ref{unrefinedcohomologygrowth}, any family of groups satisfying the hypotheses of Theorem \ref{twistedtorsionlimitmultiplicity} exhibits torsion cohomology growth.  As shown in Proposition \ref{boundinggrowth}, fairly general families $\{ \Gamma'_q \}$ of congruence subgroups which grow horizontally, e.g. as $q$ varies through a sequence of primes, satisfy these hypotheses.  However, \eqref{completedcohomologylowerbound} does not give any information concerning cohomology growth for such families of horizontally growing congruence subgroups.   

\item
Suppose $\delta(G^{\sigma}) = 1$ and consider the family of congruence subgroups $\{ \Gamma_{p^n} \}$ of full level $p^n.$  Both Corollary \ref{refinedcohomologygrowth} and \eqref{completedcohomologylowerbound} yield lower bounds on torsion cohomology growth.  However, their origins should be regarded as very distinct.

Cohomology classes accounted for by \eqref{completedcohomologylowerbound} are an aggregate of mod $p$ congruences between (mod $p$) automorphic representations of $\G$ of arbitrary level.  

On the other hand, the cohomology classes accounted for by Corollary \ref{refinedcohomologygrowth} conjecturally arise by base change transfer over $\mathbb{Z}$ \cite{CalegariVenkatesh} \cite{Lip2}.  Partial evidence for this transfer occurs in Theorem \ref{torsioncomparison}, which may be regarded as ``numerical base change transfer over $\mathbb{Z}$ at infinite level" (see \cite{Lip2} for some special cases of numerical base change transfer over $\mathbb{Z}$ at finite level). 

Base change for torsion cohomology leads us to expect that torsion witnessed in Corollary \ref{refinedcohomologygrowth} is supported at the same primes as torsion in the cohomology of locally symmetric spaces for $G^{\sigma}$; computations suggest that the latter primes are large and irregular \cite{SengunBianchi}.  On the other hand, torsion witnessed through \eqref{completedcohomologylowerbound} is supported at a single prime $p$ and gives no information about the prime to $p$ part of torsion cohomology.
\end{itemize}

\medskip

\bibliography{bibli}

\def\cftil#1{\ifmmode\setbox7\hbox{$\accent"5E#1$}\else
  \setbox7\hbox{\accent"5E#1}\penalty 10000\relax\fi\raise 1\ht7
  \hbox{\lower1.15ex\hbox to 1\wd7{\hss\accent"7E\hss}}\penalty 10000
  \hskip-1\wd7\penalty 10000\box7}
\begin{thebibliography}{10}

\bibitem{7samurai}
M.~{Abert}, N.~{Bergeron}, I.~{Biringer}, T.~{Gelander}, N.~{Nikolov},
  J.~{Raimbault}, and I.~{Samet}.
\newblock {On the growth of $L^2$-invariants for sequences of lattices in {L}ie
  groups}.
\newblock {\em ArXiv e-prints}, October 2012.

\bibitem{AC}
James Arthur and Laurent Clozel.
\newblock {\em Simple algebras, base change, and the advanced theory of the
  trace formula}, volume 120 of {\em Annals of Mathematics Studies}.
\newblock Princeton University Press, Princeton, NJ, 1989.

\bibitem{BM}
Dan Barbasch and Henri Moscovici.
\newblock {$L^{2}$}-index and the {S}elberg trace formula.
\newblock {\em J. Funct. Anal.}, 53(2):151--201, 1983.

\bibitem{BV}
Nicolas Bergeron and Akshay Venkatesh.
\newblock The asymptotic growth of torsion homology for arithmetic groups.
\newblock {\em J. Inst. Math. Jussieu}, 12(2):391--447, 2013.

\bibitem{BZ2}
J.-M. Bismut and W.~Zhang.
\newblock Milnor and {R}ay-{S}inger metrics on the equivariant determinant of a
  flat vector bundle.
\newblock {\em Geom. Funct. Anal.}, 4(2):136--212, 1994.

\bibitem{BZ1}
Jean-Michel Bismut and Weiping Zhang.
\newblock An extension of a theorem by {C}heeger and {M}\"uller.
\newblock {\em Ast\'erisque}, (205):235, 1992.
\newblock With an appendix by Fran{\c{c}}ois Laudenbach.

\bibitem{BLS}
A.~Borel, J.-P. Labesse, and J.~Schwermer.
\newblock On the cuspidal cohomology of {$S$}-arithmetic subgroups of reductive
  groups over number fields.
\newblock {\em Compositio Math.}, 102(1):1--40, 1996.

\bibitem{Bouaziz}
Abderrazak Bouaziz.
\newblock Formule d'inversion d'int\'egrales orbitales tordues.
\newblock {\em Compositio Math.}, 81(3):261--290, 1992.

\bibitem{bredon}
Glen~E. Bredon.
\newblock {\em Introduction to compact transformation groups}.
\newblock Academic Press, New York-London, 1972.
\newblock Pure and Applied Mathematics, Vol. 46.

\bibitem{CalegariBlog}
F.~{Calegari}.
\newblock {Blog post: torsion in the cohomology of co-compact arithmetic
  lattices.}
\newblock {\em
  http://galoisrepresentations.wordpress.com/2013/02/06/torsion-in-the-cohomology-of-co-compact-arithmetic-lattices/}.

\bibitem{CalegariVenkatesh}
F.~{Calegari} and A.~{Venkatesh}.
\newblock {A torsion Jacquet--Langlands correspondence}.
\newblock {\em ArXiv e-prints}, December 2012.

\bibitem{CEcompletedcohomology}
Frank Calegari and Matthew Emerton.
\newblock Bounds for multiplicities of unitary representations of cohomological
  type in spaces of cusp forms.
\newblock {\em Ann. of Math. (2)}, 170(3):1437--1446, 2009.

\bibitem{CEhyperbolic3manifold}
Frank Calegari and Matthew Emerton.
\newblock Mod-{$p$} cohomology growth in {$p$}-adic analytic towers of
  3-manifolds.
\newblock {\em Groups Geom. Dyn.}, 5(2):355--366, 2011.

\bibitem{Clozel}
Laurent Clozel.
\newblock Changement de base pour les repr\'esentations temp\'er\'ees des
  groupes r\'eductifs r\'eels.
\newblock {\em Ann. Sci. \'Ecole Norm. Sup. (4)}, 15(1):45--115, 1982.

\bibitem{Delorme}
P.~Delorme.
\newblock Th\'eor\`eme de {P}aley-{W}iener invariant tordu pour le changement
  de base {${\bf C}/{\bf R}$}.
\newblock {\em Compositio Math.}, 80(2):197--228, 1991.

\bibitem{EGA}
A.~Grothendieck.
\newblock \'{E}l\'ements de g\'eom\'etrie alg\'ebrique. {I}. {L}e langage des
  sch\'emas.
\newblock {\em Inst. Hautes \'Etudes Sci. Publ. Math.}, (4):228, 1960.

\bibitem{HC}
Harish-Chandra.
\newblock Harmonic analysis on real reductive groups. {III}. {T}he
  {M}aass-{S}elberg relations and the {P}lancherel formula.
\newblock {\em Ann. of Math. (2)}, 104(1):117--201, 1976.

\bibitem{HerbWolf}
Rebecca~A. Herb and Joseph~A. Wolf.
\newblock The {P}lancherel theorem for general semisimple groups.
\newblock {\em Compositio Math.}, 57(3):271--355, 1986.

\bibitem{Illman}
S{\"o}ren Illman.
\newblock Smooth equivariant triangulations of {$G$}-manifolds for {$G$} a
  finite group.
\newblock {\em Math. Ann.}, 233(3):199--220, 1978.

\bibitem{KnudsenMumford}
Finn~Faye Knudsen and David Mumford.
\newblock The projectivity of the moduli space of stable curves. {I}.
  {P}reliminaries on ``det'' and ``{D}iv''.
\newblock {\em Math. Scand.}, 39(1):19--55, 1976.

\bibitem{Labesse}
J.-P. Labesse.
\newblock Pseudo-coefficients tr\`es cuspidaux et {$K$}-th\'eorie.
\newblock {\em Math. Ann.}, 291(4):607--616, 1991.

\bibitem{LabesseWaldspurger}
Jean-Pierre Labesse and Jean-Loup Waldspurger.
\newblock {\em La formule des traces tordue d'apr\`es le {F}riday {M}orning
  {S}eminar}, volume~31 of {\em CRM Monograph Series}.
\newblock American Mathematical Society, Providence, RI, 2013.
\newblock With a foreword by Robert Langlands [dual English/French text].

\bibitem{Lan}
Robert~P. Langlands.
\newblock {\em Base change for {${\rm GL}(2)$}}, volume~96 of {\em Annals of
  Mathematics Studies}.
\newblock Princeton University Press, Princeton, N.J.; University of Tokyo
  Press, Tokyo, 1980.

\bibitem{Lip2}
Michael Lipnowski.
\newblock Equivariant torsion and base change.
\newblock {\em Algebra Number Theory}, 9(10):2197--2240, 2015.

\bibitem{Lip1}
Michael Lipnowski.
\newblock The equivariant {C}heeger--{M}\"uller theorem on locally symmetric
  spaces.
\newblock {\em J. Inst. Math. Jussieu}, 15(1):165--202, 2016.

\bibitem{LottRothenberg}
John Lott and Mel Rothenberg.
\newblock Analytic torsion for group actions.
\newblock {\em J. Differential Geom.}, 34(2):431--481, 1991.

\bibitem{luck}
Wolfgang L{\"u}ck.
\newblock Analytic and topological torsion for manifolds with boundary and
  symmetry.
\newblock {\em J. Differential Geom.}, 37(2):263--322, 1993.

\bibitem{muller}
Werner M{\"u}ller.
\newblock Analytic torsion and {$R$}-torsion for unimodular representations.
\newblock {\em J. Amer. Math. Soc.}, 6(3):721--753, 1993.

\bibitem{MuellerPfaff}
Werner M{\"u}ller and Jonathan Pfaff.
\newblock On the asymptotics of the {R}ay-{S}inger analytic torsion for compact
  hyperbolic manifolds.
\newblock {\em Int. Math. Res. Not. IMRN}, (13):2945--2983, 2013.

\bibitem{Olbricht}
Martin Olbrich.
\newblock {$L^2$}-invariants of locally symmetric spaces.
\newblock {\em Doc. Math.}, 7:219--237, 2002.

\bibitem{RohlfsSpeh}
J.~Rohlfs and B.~Speh.
\newblock Lefschetz numbers and twisted stabilized orbital integrals.
\newblock {\em Math. Ann.}, 296(2):191--214, 1993.

\bibitem{Rohlfs}
J{\"u}rgen Rohlfs.
\newblock Lefschetz numbers for arithmetic groups.
\newblock In {\em Cohomology of arithmetic groups and automorphic forms
  ({L}uminy-{M}arseille, 1989)}, volume 1447 of {\em Lecture Notes in Math.},
  pages 303--313. Springer, Berlin, 1990.

\bibitem{SengunBianchi}
Mehmet~Haluk {\c{S}}eng{\"u}n.
\newblock On the integral cohomology of {B}ianchi groups.
\newblock {\em Exp. Math.}, 20(4):487--505, 2011.

\bibitem{Serre}
Jean-Pierre Serre.
\newblock {\em Cohomologie galoisienne}, volume~5 of {\em Lecture Notes in
  Mathematics}.
\newblock Springer-Verlag, Berlin, fifth edition, 1994.

\bibitem{Shintani}
Takuro Shintani.
\newblock Two remarks on irreducible characters of finite general linear
  groups.
\newblock {\em J. Math. Soc. Japan}, 28(2):396--414, 1976.

\end{thebibliography}

\bibliographystyle{plain}

\end{document}